\documentclass[a4paper,10pt]{article}

\usepackage[T1]{fontenc}
\usepackage[utf8]{inputenc}

\usepackage{latexsym}
\usepackage{mathrsfs}
\usepackage{amsfonts,amsmath,amssymb,amsthm}
\usepackage{bm}
\usepackage{stmaryrd}

\usepackage{indentfirst}
\usepackage{verbatim}
\usepackage{microtype}
\usepackage{float}
\usepackage{enumitem}

\usepackage{multirow}
\usepackage{diagbox}
\usepackage{booktabs}
\usepackage{array}
\usepackage{tabularx}
\usepackage[table]{xcolor}

\usepackage{graphicx}
\usepackage{epstopdf}
\usepackage{subcaption}
\usepackage{tikz}
\usepackage{pgfplots}
\usetikzlibrary{fit}
\pgfplotsset{compat=1.18}

\usepackage{pdfpages}
\usepackage{algorithm}
\usepackage{algorithmic}
\usepackage[title]{appendix}

\usepackage[unicode,colorlinks,linkcolor=blue,citecolor=red]{hyperref}

\numberwithin{equation}{section}
\allowdisplaybreaks

\newtheorem{theorem}{Theorem}[section]
\newtheorem{lemma}[theorem]{Lemma}
\newtheorem{remark}[theorem]{Remark}

\newcommand{\safeincludegraphics}[2][]{%
  \IfFileExists{#2}{\includegraphics[#1]{#2}}{%
    \fbox{\parbox[c][0.18\textheight][c]{0.78\linewidth}{\centering
    Placeholder for Figure\\[0.5ex]\texttt{\detokenize{#2}}}}%
  }%
}

\renewcommand{\epsilon}{\varepsilon}

\begin{document}

\begin{center}
{\Large\bfseries Adaptive Randomized Neural Networks for PDEs:
Structure-Aware Feature Redistribution and Residual-Driven Block Enrichment}
\end{center}

\begin{center}
{\large You Yang}\footnote{School of Mathematics and Statistics, Xi'an Jiaotong University, Xi'an, Shaanxi 710049, China. Email: {\tt yangyou@stu.xjtu.edu.cn}}
\quad {\rm and} \quad
{\large Fei Wang}\footnote{School of Mathematics and Statistics \& State Key Laboratory of Multiphase Flow in Power Engineering, Xi'an Jiaotong University, Xi'an, Shaanxi 710049, China. The work of this author was partially supported by the National Natural Science Foundation of China (Grant No.\ 92470115). Email: {\tt feiwang.xjtu@xjtu.edu.cn}}
\end{center}

\medskip

\begin{quote}
{\bf Abstract.} Randomized neural networks retain an output-linear least-squares solve, but globally sampled random features may fail to capture the spatial and scale structure of PDE solutions. We propose SFRBE--RaNN, a two-stage adaptive trial-space construction. SFR redistributes a fixed feature budget according to regional indicator masses and adapts regional weight scales through low-dimensional residual-based optimization. RBE constructs nonredundant enrichments by screening multiple activation families, removing represented response directions, and selecting the smallest admissible prefix. For solution-dependent operators, candidates are screened with a frozen operator and accepted only after nonlinear residual verification. For fixed linear problems, we establish an exact discrete residual-reduction identity and a conditional solution-error estimate. Building on this identity, we combine safeguarded SFR with RBE capture factors to derive a finite-horizon error bound involving baseline best approximation and empirical consistency, together with a conditional random-search success-probability bound. Numerical experiments on localized, multiscale, and nonlinear space--time problems demonstrate the method's effectiveness.
\end{quote}

{\bf Keywords.} Randomized neural networks, adaptive trial spaces, feature redistribution, residual-informed enrichment, activation-family selection

\medskip

{\bf Mathematics Subject Classification.} 68T07, 68T05, 65N35, 65F20, 65M22

\section{Introduction}\label{sec:intro}

Classical numerical methods, including finite difference, finite element, discontinuous Galerkin, and spectral methods, remain central to PDE computation. Nevertheless, localized peaks, oscillations, layers, and evolving interfaces often require adaptive approximation spaces whose resolution follows the underlying solution structure.

Neural PDE solvers offer flexible trial spaces, including Deep Ritz, DGM, PINNs, and Galerkin neural methods~\cite{e2018deep,sirignano2018dgm,raissi2019physics,ainsworth2021galerkin}. Their performance, however, can depend heavily on architecture, sampling, and nonlinear training.

In contrast to fully trained neural PDE solvers, randomized neural networks (RaNNs) generate and freeze the hidden representation and determine only the output coefficients from the governing equations. For a shallow representation,
\begin{equation}\label{eq:intro_rann_representation}
    u_h(\bm x) = \sum_{j=1}^{m} c_j \rho\!\left( \bm w_j^\top\bm x+b_j \right),
\end{equation}
where the hidden parameters are fixed during each solve and the coefficients are obtained from a linear or linearized least-squares system. This linear output structure avoids nonlinear optimization of all network parameters and is compatible with a range of discretizations~\cite{dong2021local,chen2022bridging,shang2023randomized,shang2024randomized,sun2024local1,sun2024local2,li2025local,li2026structure,wang2026domain}.

The quality of a RaNN trial space depends on how its finite feature budget is configured: the distribution of hidden weights controls the characteristic length scales and directional variation of the features, feature anchors determine spatial placement, and the activation family determines the local response profile. A common scale model is
\begin{equation}\label{eq:intro_weight_sampling}
    \bm w_j\sim\mathcal U(-\bm r,\bm r)=\prod_{k=1}^{d}\mathcal U(-r_k,r_k),\qquad \bm r=(r_1,\ldots,r_d),
\end{equation}
where \(\bm r\) controls characteristic scales and directional variation. Feature anchors \(\bm a_j\in\Omega\) are introduced by setting \(b_j=-\bm w_j^\top\bm a_j\), so that the preactivation vanishes at \(\bm a_j\). Thus, a randomized trial space depends not only on its width, but also on its feature configuration and response span. The activation family controls the feature profile: \(\tanh\) produces smooth transitions, Gaussian features produce localized responses, and trigonometric features produce oscillatory variation.

Prior work has adapted several components of neural trial spaces. Global scales and low-dimensional feature distributions have been adjusted to control the overall randomized representation~\cite{dong2022Oncomputing,yang2026adaptivedistribution}; adaptive activation functions have been used to modify neural PDE responses~\cite{jagtap2020adaptive}; local trial spaces have been constructed using residual indicators~\cite{sun2024local2}; residual-guided neural basis construction has been used to progressively expand approximation spaces~\cite{dang2024adaptive,weng2026deep}; and randomized greedy methods have used randomly discretized dictionaries for weak-greedy enrichment~\cite{xu2025randomized}. Related residual-driven mechanisms in PINN methods guide sampling, weighting, or network updates~\cite{mcclenny2023self,wu2023comprehensive,gao2023failure}. These approaches operate at different levels of adaptation. Adaptive activation methods generally learn slopes or shape parameters together with the network parameters, whereas a strict RaNN solve keeps the hidden representation fixed and determines only the output coefficients from a linear or linearized least-squares system. Methods that adapt global feature distributions or local trial spaces do not, by themselves, specify how a fixed feature budget should be redistributed or how features aligned with missing residual directions should be added. This motivates a progressive two-stage approach to adaptive trial-space design: first adapting the existing trial space, and then expanding it with features better suited to the problem at hand.

We propose SFRBE--RaNN as a two-stage adaptive trial-space construction tailored to this setting. SFR reconfigures the existing feature budget using residual, derivative, geometric, or problem-specific indicators to identify structure and background regions, reposition feature anchors, and refresh regional scales without increasing the total width. RBE then adaptively constructs residual-informed feature blocks: candidate families are residualized against the current response space, ranked by a common capture score, and truncated at the smallest admissible nonredundant prefix. Fixed residual operators use block-incremental updates, whereas solution-dependent operators use frozen-operator screening followed by safeguarded reassembly.

The main contributions of this paper are as follows:
\begin{itemize}
    \item \textbf{Structure-aware feature redistribution.}
    We introduce SFR to identify localized or physics-defined structures, redistribute a fixed feature budget among the detected structure regions and the background according to their relative indicator masses, and adapt region-specific random weight scales through low-dimensional residual-based optimization.

    \item \textbf{Residual-informed feature-block construction.} RBE compares candidate blocks from different activation families, residualizes their responses against the current response space, ranks them by residual capture, and retains the smallest admissible nonredundant prefix.

    \item \textbf{Residual reduction and finite-horizon error control.}
    For frozen linear least-squares subproblems, we derive an exact residual-reduction identity for the residual-orthogonal RBE update and a conditional PDE-error estimate. For fixed linear problems, we further combine safeguarded SFR with the cumulative RBE capture factors to establish a finite-horizon error bound, together with a success-probability bound under a conditional coverage assumption for the complete random candidate search. We also provide a separate local one-step safeguard estimate for Picard or Newton updates.
\end{itemize}

The remainder is organized as follows. Sections~\ref{sec:rann}--\ref{sec:rbe} present the randomized residual formulation and develop the SFR and RBE modules. Section~\ref{sec:theory} gives the analysis; Section~\ref{sec:numerical} reports the experiments; and the conclusion and appendices summarize the findings and implementation details.

\section{Randomized Neural PDE Formulation and Adaptive Framework}
\label{sec:rann}

\subsection{Randomized neural residual formulation}
\label{sec:rann_formulation}

A shallow randomized neural network defines the trial space and the corresponding approximation as
\begin{equation}\label{eq:rann_representation}
    \mathcal V_m = \operatorname{span}\{\phi_1,\ldots,\phi_m\}, \qquad \phi_j(\bm x) = \rho(\bm w_j^\top\bm x+b_j), \qquad u_h(\bm x) = \sum_{j=1}^{m}c_j\phi_j(\bm x).
\end{equation}
The hidden parameters are fixed during each residual solve, while the output coefficients \(\bm c=(c_1,\ldots,c_m)^\top\) are determined by least squares.

Consider
\begin{equation}\label{eq:pde_operator_form}
    \begin{aligned}
        \mathcal Lu &= f, && \text{in }\Omega,\\ \mathcal Bu &= g, && \text{on }\partial\Omega,\\ \mathcal Iu &= h, && \text{on }\Omega_0.
    \end{aligned}
\end{equation}

Let $A_r$, $A_b$, and $A_0$ be the residual-response matrices evaluated at the interior, boundary, and initial collocation points, respectively. Let $\eta_b>0$ and $\eta_0>0$ denote the boundary and initial residual weights. The weighted residual system is defined as
\begin{equation}\label{eq:weighted_residual_system}
    A =
    \begin{bmatrix}
        A_r\\ \sqrt{\eta_b}A_b\\ \sqrt{\eta_0}A_0
    \end{bmatrix},
    \qquad \bm y =
    \begin{bmatrix}
        \bm f_r\\ \sqrt{\eta_b}\bm g_b\\ \sqrt{\eta_0}\bm h_0
    \end{bmatrix}.
\end{equation}
The output coefficients are computed from
\begin{equation}\label{eq:rann_lsq}
    \bm c = \arg\min_{\bm v} \|A\bm v-\bm y\|_2^2.
\end{equation}

For nonlinear PDEs, each outer iteration produces a linear or linearized least-squares problem of the form
\begin{equation}\label{eq:nonlinear_lsq}
    \bm c^{k+1} = \arg\min_{\bm v} \|A^k\bm v-\bm y^k\|_2^2.
\end{equation}
Here \(A^k\) and \(\bm y^k\) denote the matrix and right-hand side associated with the \(k\)-th linearized subproblem. For a fixed-operator Picard iteration, \(A^k\) remains fixed while \(\bm y^k\) is updated; for a Newton-type iteration, \(A^k\) and \(\bm y^k\) are assembled from the current linearization.

\subsection{Overall adaptive randomized neural network framework}
\label{sec:overall_framework}

SFRBE--RaNN updates the randomized trial space through two complementary modules:
\[ \mathcal V_0 \xrightarrow{\mathrm{SFR}} \widetilde{\mathcal V}_0 \xrightarrow{\mathrm{RBE}} \mathcal V_1 \xrightarrow{\mathrm{RBE}} \mathcal V_2 \xrightarrow{\mathrm{RBE}} \cdots . \]

SFR reconfigures the existing fixed width space through feature anchors, neuron allocations, biases, and regional hidden-weight scales, without changing the total number of raw features. RBE then adds a nonredundant space aligned with the unresolved residual:
\[ \mathcal V_{L+1} = \mathcal V_L + \mathcal W_L^{\rm new}. \]
Here \(\mathcal W_L^{\rm new}\) is the accepted enrichment space, represented by accepted composite features for fixed residual operators and by accepted raw features when the operator is updated. Here \(m_{\rm full}\) and \(m_{\rm scr}\) denote the full candidate-block size and the screening size, respectively. Figure~\ref{fig:sfrbe_modules} summarizes this workflow: the initial randomized space is diagnosed and refreshed by SFR, and the remaining residual is enriched by RBE.

\begin{figure}[t]
    \centering
    \includegraphics[width=\linewidth]{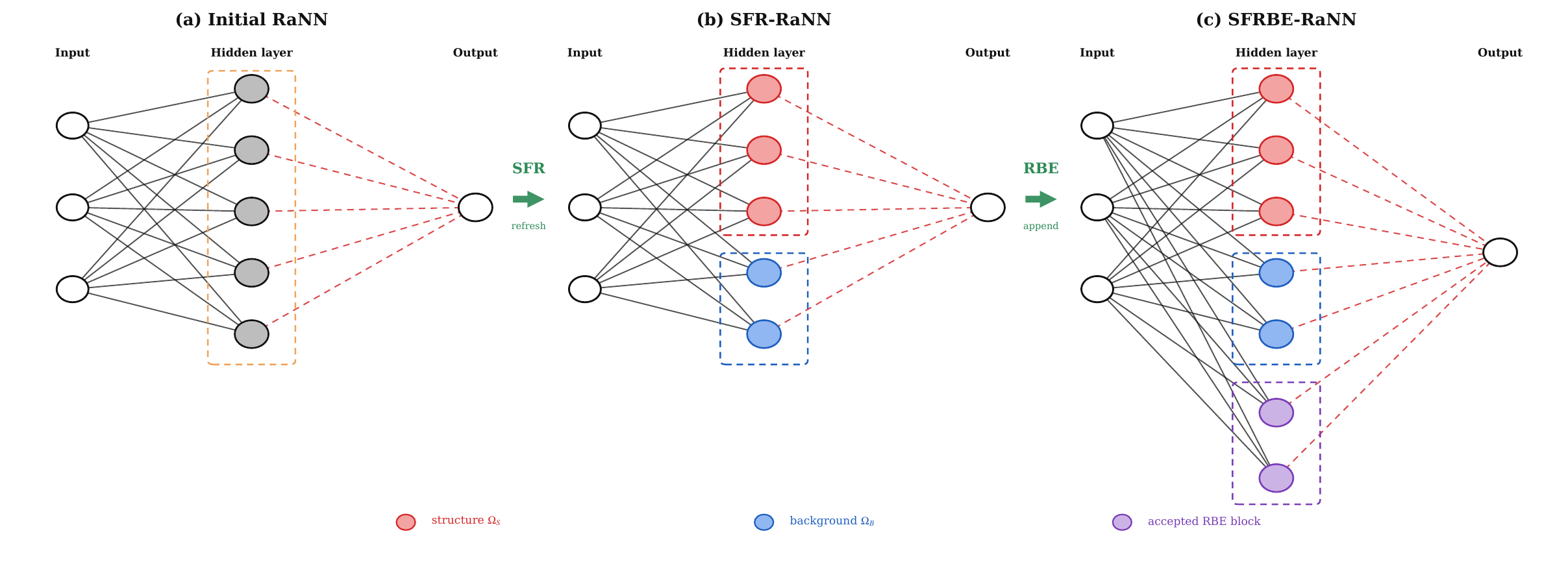}
    \caption{Standard randomized neural network followed by the two adaptive modules. SFR diagnoses the residual structure and refreshes the existing feature budget through regional anchors and scales. RBE screens candidate activation families, removes represented directions, and appends only an accepted residual-aligned block.}
    \label{fig:sfrbe_modules}
\end{figure}

\section{Structure-Aware Feature Redistribution}
\label{sec:sfr}

Let $u_h$ denote the current randomized neural-network approximation and let $\mathcal T_d=\{K_i\}_{i\in\mathcal I_d}$ be a Cartesian diagnostic grid, which is used for marking. SFR keeps the total number of features fixed and relocates their anchors only when the diagnostic indicator is spatially concentrated.

\subsection{Indicator and global marking}
\label{sec:sfr_detection}

For two-dimensional diagnostic grids, each valid cell $K_i$ with effective measure $\mu_i$ is evaluated at four tensor-product Gauss--Legendre points $z_{iq}$, $q=1,\ldots,4$. Let $Q_i=\{q\in\{1,\ldots,4\}:z_{iq}\in\Omega\}$ and $\mathcal I_d^{\rm v}=\{i\in\mathcal I_d:Q_i\ne\varnothing\}$; cells outside $\mathcal I_d^{\rm v}$ are excluded from marking and sampling. For $i\in\mathcal I_d^{\rm v}$, set $\omega_{iq}=\mu_i/|Q_i|$ and define
\begin{equation}
    r_i=\frac{1}{\mu_i}\sum_{j=1}^{J}\lambda_j \sum_{q\in Q_i}\omega_{iq} \left|\mathcal R_j(u_h;z_{iq})\right|, \qquad g_i=\frac{1}{\mu_i}\sum_{j=1}^{J}\lambda_j \sum_{q\in Q_i}\omega_{iq} \left\|\nabla u_{h,j}(z_{iq})\right\| . \label{eq:sfr_local_indicator}
\end{equation}
Here \(\mathcal R_j(u_h;\cdot)\) denotes the residual of the \(j\)th equation, including any prescribed row weighting. Also, $J=1$ for the scalar problems considered in the experiments and \(\lambda_j=1\). For space--time problems, the diagnostic cells and their effective measures include all independent coordinates. In the three-dimensional space--time Burgers run, the indicators are evaluated at the center of each Cartesian cell with measure \(\mu_{ijk}=h_xh_yh_t\). With a small numerical constant \(\varepsilon_n\), the normalized score is
\begin{equation}
    \widehat r_i=\frac{r_i}{\max_{k\in\mathcal I_d^{\rm v}} r_k+\varepsilon_n}, \qquad \widehat g_i=\frac{g_i}{\max_{k\in\mathcal I_d^{\rm v}} g_k+\varepsilon_n}, \qquad s_i=\frac{\widehat r_i+\widehat g_i}{\max_{k\in\mathcal I_d^{\rm v}}(\widehat r_k+\widehat g_k)+2\varepsilon_n}. \label{eq:sfr_score}
\end{equation}
The area-aware cell mass is $\eta_i=\mu_i s_i$.

The marked set is one global D{\"o}rfler set,
\[ \mathcal M\in\operatorname*{arg\,min}_{\mathcal A\subseteq \mathcal I_d^{\mathrm v}}\left\{|\mathcal A|:\sum_{i\in\mathcal A}\eta_i\ge\vartheta\sum_{i\in\mathcal I_d^{\mathrm v}}\eta_i\right\}\]
For two-dimensional diagnostic grids, the marked cells are labeled by eight-neighbour connectivity, producing candidate components $\widetilde{\mathcal C}_\ell$, $\ell=1,\ldots,K_{\rm raw}$. A candidate component is retained when
\begin{equation}
    \frac{\sum_{i\in\widetilde{\mathcal C}_\ell}\eta_i} {\sum_{i\in\mathcal I_d^{\rm v}}\eta_i+\varepsilon_n} \ge \tau_{\rm conn} . \label{eq:sfr_component}
\end{equation}
After filtering, reindex the survivors as \(\mathcal C_\ell\), \(\ell=1,\ldots,K_S\), set \(\mathcal I_S=\bigcup_{\ell=1}^{K_S}\mathcal C_\ell\) and \(\mathcal I_B=\mathcal I_d^{\rm v}\setminus\mathcal I_S\), and use \(\ell\) for a structure component and $B$ for the complementary background. The connectivity and indicator mass filter removes isolated cells with low scores while preserving small components with substantial accumulated residual and gradient scores.

\subsection{Anchor redistribution and regional scales}
\label{sec:sfr_redistribution}

For a ridge feature,
\begin{equation}
    \phi_j(\bm x)=\rho(\bm w_j^{\mathsf T}\bm x+b_j), \qquad b_j=-\bm w_j^{\mathsf T}\bm a_j, \label{eq:sfr_anchor}
\end{equation}
the anchor \(\bm a_j\) controls the feature location, whereas \(\bm w_j\) controls its direction and scale. To retain a nonzero sampling probability in low-score cells, SFR defines
\begin{equation}
    \bar\eta= \frac{\sum_{i\in\mathcal I_d^{\rm v}}\eta_i} {\sum_{i\in\mathcal I_d^{\rm v}}\mu_i}, \qquad d_i=\eta_i+\delta_f\bar\eta\mu_i, \qquad \delta_f=10^{-3}. \label{eq:sfr_density}
\end{equation}
Let \(m\) be the fixed total feature budget, let \(\mathcal C=\bigcup_{\ell=1}^{K_S}\mathcal C_\ell\) denote the union of the retained structure components, and let \(\mathcal I_B=\mathcal I_d^{\rm v}\setminus\mathcal C\) denote the complementary background. The structure--background allocation is determined by the normalized density mass
\begin{equation}
    \alpha_S= \frac{\sum_{i\in\mathcal C}d_i} {\sum_{i\in\mathcal I_d^{\rm v}}d_i}, \qquad m_S=\operatorname{round}(\alpha_Sm), \qquad m_B=m-m_S. \label{eq:sfr_budget_split}
\end{equation}
Thus \(m_S\) features are assigned to the retained structure region and \(m_B\) features to the background, so that the total width remains fixed. If several structure components are retained, the structure budget \(m_S\) is distributed among them in proportion to their density masses. Conditional on the selected region, cells are sampled according to
\begin{equation}
    \mathbb P(\bm a_j\text{ selects }K_i\mid\ell)=\frac{d_i}{\sum_{k\in\mathcal C_\ell}d_k},\qquad \mathbb P(\bm a_j\text{ selects }K_i\mid B)=\frac{d_i}{\sum_{k\in\mathcal I_B}d_k}. \label{eq:sfr_sampling}
\end{equation}
After a cell is selected, each anchor is sampled uniformly from the physical extent of that cell. Each retained component, as well as the background, is assigned its own adaptively selected scale parameters; admissible scale-selection mechanisms are described in Appendix~\ref{app:rbe_parameter_refinement}. After the regional features are built, all output coefficients are recomputed by the same weighted least-squares procedure as in the global solve.

\subsection{Refresh gate and safeguard}
\label{sec:sfr_refresh}

Let $N_{\rm valid}=|\mathcal I_d^{\rm v}|$ denote the number of valid diagnostic cells. When $K_S\ge1$, let $m_\ell$ be the feature count allocated to component $\mathcal C_\ell$, with $\sum_{\ell=1}^{K_S}m_\ell=m_S$, and let $\widehat\alpha_\ell$ be the fraction of current anchors in that component. Set $\alpha_\ell^{\rm tar}=m_\ell/m$. The effective indicator support and the coverage deficit (when $K_S\ge1$) are defined by
\begin{equation}
    \chi_{\rm eff}= \frac{\left(\sum_{i\in\mathcal I_d^{\rm v}}\eta_i\right)^2} {N_{\rm valid}\sum_{i\in\mathcal I_d^{\rm v}}\eta_i^2+\varepsilon_n}, \quad D_{\rm cov}=\max_\ell \frac{[\alpha_\ell^{\rm tar}-\widehat\alpha_\ell]_+} {\alpha_\ell^{\rm tar}+\varepsilon_n}. \label{eq:sfr_support_coverage}
\end{equation}
SFR is refreshed only if the residual--gradient indicator is sufficiently localized, at least one component survives the mass filter, and the current anchors do not cover the target components:
\begin{equation}
    \chi_{\rm eff}\le\tau_{\rm eff},\qquad K_S\ge1,\qquad D_{\rm cov}\ge\tau_{\rm cover}. \label{eq:sfr_gate}
\end{equation}

The redistributed basis is compared with the old basis on the same collocation set. Let $E_{\rm acc}^{\rm old}$ and $E_{\rm acc}^{\rm new}$ denote the corresponding weighted $\ell^2$ residual norms. The refresh is accepted only when
\begin{equation}
    E_{\rm acc}^{\rm new}\le (1-\eta_{\rm acc})E_{\rm acc}^{\rm old}, \qquad \eta_{\rm acc}=0 \label{eq:sfr_accept}
\end{equation}
for the default non-increasing safeguard. For nonlinear problems, the acceptance residual is recomputed using the full nonlinear operator after the configured refinement step. If the acceptance criterion is not satisfied, the old basis and coefficients are restored. Thus, for both linear and nonlinear problems, the safeguard ensures that the post-SFR weighted empirical residual does not exceed its pre-SFR value. For fixed linear problems, this property is also used to establish the finite-horizon error bound in Theorem~\ref{thm:finite_horizon_sfrbe}.

\section{Residual-Driven Block Enrichment}
\label{sec:rbe}

RBE enlarges the trial space when the current residual still contains unresolved directions. At stage $L$, let $A_L$ denote the weighted residual-response matrix of the current features, assembled according to \eqref{eq:weighted_residual_system}. The corresponding least-squares system is
\begin{equation}
    A_L\bm c_L\approx\bm y. \label{eq:rbe_system}
\end{equation}

\subsection{Residual-weighted candidates and screening}
\label{sec:rbe_candidates}

Let $\mathcal F$ denote the problem-specific set of candidate families. In the implementation considered here, we use an optional family scheduling rule to diversify successive RBE enrichments and avoid repeatedly adding blocks from the same activation family. After a block from family $F$ is accepted, $F$ is excluded from the next candidate construction and screening step. All families are available at the first RBE step, and $F$ becomes available again once a block from another family has been accepted. RBE candidate centers are sampled from the current discrete interior PDE point set \(X_{\rm int}=\{\boldsymbol z_i\}_{i=1}^{N_{\rm int}}\), using the residual-weighted probabilities below. With $r_i$ denoting the $i$th interior entry of $\bm r_L$, the implementation uses
\[ \widehat r_i= \frac{|r_i|} {\displaystyle\max_{1\le k\le N_{\rm int}}|r_k|+\varepsilon_n}, \qquad s_i=\delta+\widehat r_i^{\,q}, \qquad p_i=(1-\alpha_{\rm mix})\frac{1}{N_{\rm int}}+\alpha_{\rm mix}\frac{s_i}{\sum_{k=1}^{N_{\rm int}}s_k}, \label{eq:rbe_sampling} \]
where the standard values are $\delta=3\times10^{-3}$, $q=2$, and $\alpha_{\rm mix}=0.85$. Hence every point retains a nonzero probability while large residuals are preferentially sampled. A screening block contains \(m_{\rm scr}\) features. Its low-dimensional family parameters (for example, a scale, frequency, or shape parameter) may be refined by the optional SFRBE parameter search described in Appendix~\ref{app:rbe_parameter_refinement}. The selected parameters are frozen when the full candidate block is formed.

The candidate families and their typical structures are summarized in Table~\ref{tab:rbe_activation_families}.

\begin{table}[!ht]
    \centering
    \small
    \setlength{\tabcolsep}{4pt}
    \renewcommand{\arraystretch}{1.08}
    \caption{Candidate activation families screened by RBE. The family is chosen from the residual-capture score after residualization; no single activation is prescribed for all PDEs.}
    \label{tab:rbe_activation_families}
    \begin{tabularx}{\linewidth}{@{}l>{\raggedright\arraybackslash}p{0.40\linewidth}>{\raggedright\arraybackslash}X@{}}
        \toprule Family & Representative feature & Residual structure typically captured \\
        \midrule \texttt{tanh} & $\tanh(\bm w^{\mathsf T}\bm x+b)$ & smooth transitions and anisotropic layers \\
        \texttt{ridge\_gauss} & $\exp[-(\bm w^{\mathsf T}\bm x+b)^2]$ & localized ridge-like defects \\
        \texttt{sin} & $\sin(\pi(\bm w^{\mathsf T}\bm x+b))$ & oscillatory or high-frequency residuals \\
        \texttt{radial\_gauss} & $\exp[-\sum_\ell r_\ell^2(x_\ell-a_\ell)^2]$ & isolated peaks and compact defects \\
        \texttt{mexican\_hat} & $(d-s)\exp(-s/2),\quad s=\sum_{\ell=1}^{d}r_\ell^2(x_\ell-a_\ell)^2$ & sign-changing localized residuals \\
        \bottomrule
    \end{tabularx}
\end{table}

Let $Q_L$ be an orthonormal basis for the current retained response space and let $\bm r_L$ be the current weighted residual. For any candidate response matrix $C_F$, form
\begin{equation}
    C_{\perp,F}=(I-Q_LQ_L^{\mathsf T})C_F =U_F\Sigma_FV_F^{\mathsf T}+E_F, \label{eq:rbe_residualization}
\end{equation}
where \(U_F\Sigma_FV_F^{\mathsf T}\) contains the singular components retained by the configured relative SVD threshold and \(E_F\) contains the discarded components. Define the regularized capture and map-amplification measures by
\begin{equation}
    s_F=\frac{\|U_F^{\mathsf T}\bm r_L\|_2^2}{\|\bm r_L\|_2^2+\varepsilon_n},\qquad \kappa_F=\|V_F\Sigma_F^{-1}\|_2. \label{eq:rbe_screen_score}
\end{equation}
For the initial family screening, \eqref{eq:rbe_screen_score} is applied to the $m_{\rm scr}$-feature screening block $C_F^{\rm scr}$, and the resulting quantities are denoted by $s_F^{\rm scr}$ and $\kappa_F^{\rm scr}$. The threshold $\tau_{\rm cap}=10^{-4}$ is the minimum screening capture score required for a candidate family to proceed to full-block construction.

For the exact identities below, define the corresponding unregularized score for any retained SVD-truncated block by
\begin{equation}
    s_F^{(0)}=\frac{\|U_F^{\mathsf T}\bm r_L\|_2^2}{\|\bm r_L\|_2^2},\qquad \bm r_L\ne\bm 0. \label{eq:rbe_exact_score}
\end{equation}
Thus, $s_F$ and $s_F^{(0)}$ use the same retained SVD basis and differ only in the denominator regularization.

\subsection{Prefix selection and safeguarded enrichment}
\label{sec:rbe_update}

For a family passing the initial screening, RBE generates a full block of $m_{\rm full}$ features and ranks its columns by one-column residual capture, computed by applying \eqref{eq:rbe_screen_score} to each residualized column individually. It then tests the prescribed prefix sizes $p\in\mathcal P$. Let $C_{\perp,F,p}$ denote the residualized matrix formed by the first $p$ ranked features, and let $U_{F,p}$ denote the retained left singular vectors after SVD truncation. For each prefix, \(s_{F,p}\) and \(\kappa_{F,p}\) are defined by applying \eqref{eq:rbe_screen_score} to the retained SVD of \(C_{\perp,F,p}\). Likewise, \(s_{F,\mathrm{full}}\) and \(\kappa_{F,\mathrm{full}}\) denote the corresponding quantities obtained from the full block. The smallest admissible prefix is the smallest $p\in\mathcal P$ satisfying
\begin{equation}
    s_{F,p}\ge\theta_{\rm pref}s_{F,\rm full};\quad \kappa_{F,p}\le\kappa_{\max}^{\rm map};\quad \dim\operatorname{Range}(U_{F,p})\ge1;\quad s_{F,p}\ge\tau_{\rm stop}. \label{eq:rbe_prefix}
\end{equation}
Families are examined in decreasing order of $s_F^{\rm scr}$, and the first family with such a prefix is selected. The map-conditioning cap is denoted by \(\kappa_{\max}^{\rm map}\); it is $10^6$ for all cases. If no prefix satisfies these conditions, the candidate block is not added.

For a fixed linear operator, let $C_{\perp,*}=(I-Q_LQ_L^{\mathsf T})C_*$ be the residualized accepted block, and let $U_*$ denote the orthonormal basis obtained from its retained SVD components. By construction, $Q_L^{\mathsf T}U_*=0$ up to numerical roundoff. The least-squares correction contributed by the new block is therefore the projection of the current residual onto $\operatorname{Range}(U_*)$:
\begin{equation}
    \bm\gamma_*=U_*^{\mathsf T}\bm r_L,\qquad \bm r_{L+1}=\bm r_L-U_*\bm\gamma_*,\qquad Q_{L+1}=[Q_L,U_*]. \label{eq:rbe_linear_update}
\end{equation}

At outer iteration $k$, the solution-dependent operator is frozen for candidate screening. Once a block is selected, its raw feature columns are appended, the frozen augmented problem is reassembled, and the coefficients are recomputed:
\begin{equation}
    A_{L+1}^{(k)}=[A_L^{(k)},C_*^{(k)}],\qquad \bm c_{L+1}^{(k+1)}=\operatorname*{arg\,min}_{\bm v}\|A_{L+1}^{(k)}\bm v-\bm y^{(k)}\|_2^2. \label{eq:rbe_nonlinear_update}
\end{equation}
The resulting trial state is then refined with the full nonlinear operator before the acceptance test. The block is accepted only if the recomputed nonlinear residual decreases,
\begin{equation}
    \|\mathcal R_{\rm nl}(u_{L+1})\|_2 <\|\mathcal R_{\rm nl}(u_L)\|_2, \label{eq:rbe_accept}
\end{equation}
Otherwise the trial state is rolled back and enrichment stops. This residual safeguard is the key distinction between candidate screening and an accepted RBE update.

\subsection{SFR--RBE workflow}
\label{sec:sfrbe_workflow}

The complete procedure is therefore:
\begin{enumerate}[leftmargin=2em,itemsep=1pt,topsep=2pt]
    \item generate the initial randomized features and solve for the output coefficients;
    \item evaluate the residual--gradient indicator and, if \eqref{eq:sfr_gate} holds, perform one safeguarded SFR refresh;
    \item construct residual-weighted candidate blocks for the configured families, refine their low-dimensional family parameters, and screen them by \eqref{eq:rbe_screen_score};
    \item select the smallest admissible prefix satisfying
    \eqref{eq:rbe_prefix}, append it, and re-solve the augmented problem;
    \item for fixed linear operators, accept the selected block and update by \eqref{eq:rbe_linear_update}; for solution-dependent operators, accept the trial block only if \eqref{eq:rbe_accept} holds; repeat the enrichment procedure.
\end{enumerate}
The same workflow is used throughout; the block update and final acceptance test depend on whether the residual operator is fixed or solution-dependent.

\section{Residual Reduction and Error Analysis}
\label{sec:theory}

This section analyzes residual reduction and error control for SFRBE--RaNN. For fixed linear problems, we establish the exact residual reduction produced by an accepted RBE block, connect the residual capture score with the PDE solution error, and derive a finite-horizon error bound for safeguarded SFRBE with a conditional success-probability bound for the randomized candidate search. For solution-dependent operators, we give a separate one-step residual estimate for frozen Picard and Newton updates.

The fixed-linear results assume one fixed affine residual map, including its right-hand side, one weighted collocation rule, and exact unregularized projection onto the retained orthonormal SVD response spaces. Ridge regularization and inexact solves are implementation choices. The finite-horizon probabilistic statement is conditional on candidate coverage and empirical consistency, while the nonlinear result is treated separately because the residual operator may change with the current iterate.

\subsection{Discrete residual reduction}

Consider the weighted least-squares problem over the current retained residual-response space, represented by an orthonormal basis \(Q_L\):
\begin{equation}\label{eq:theory_lsq}
    \bm d_L = \arg\min_{\bm d}\|Q_L\bm d-\bm y\|_2^2 = Q_L^\top\bm y, \qquad \bm r_L = \bm y-Q_L\bm d_L.
\end{equation}
For a candidate block \(C\), define
\[ C_\perp = (I-Q_LQ_L^\top)C, \]
and let
\[ C_\perp = U\Sigma V^\top+E_{\rm trunc} \]
denote its decomposition into retained and discarded singular components, with \(E_{\rm trunc}\) containing the discarded components. The increment is formed only from \(\operatorname{Range}(U)\), so the discarded directions are not part of the augmented response space. For the fixed-linear retained block, the equivalent transformed features can be written as \(Z=C V\Sigma^{-1}\); hence \((I-Q_LQ_L^\top)Z=U\) when the retained SVD relation is exact.

\begin{theorem}[Residual reduction identity for the block incremental least-squares update]
    \label{thm:rbe_residual_decrease}
    Assume that the current and augmented coefficient vectors are obtained by exact unregularized least-squares projection in the retained spaces. In exact arithmetic, let \(Q_L^\top Q_L=I\), \(Q_L^\top U=0\), \(U^\top U=I\), and \(\bm r_L=(I-Q_LQ_L^\top)\bm y\neq\bm 0\). Let \(\operatorname{Range}(U)\) denote the retained nonredundant residual-response subspace of the SVD-truncated candidate block. For the accepted block \(C\), define \(s_C^{(0)}=\|U^\top\bm r_L\|_2^2/\|\bm r_L\|_2^2\). After augmenting the current response space by \(\operatorname{Range}(U)\), the block incremental least-squares residual satisfies
    \begin{equation}\label{eq:rbe_residual_update_theory}
        \bm r_{L+1} = \bm r_L-UU^\top\bm r_L,
    \end{equation}
    and
    \begin{equation}\label{eq:rbe_residual_identity_theory}
        \begin{aligned}
            \|\bm r_{L+1}\|_2^2 &= \|\bm r_L\|_2^2 - \|U^\top\bm r_L\|_2^2 &= \left(1-s_C^{(0)}\right) \|\bm r_L\|_2^2.
        \end{aligned}
    \end{equation}
\end{theorem}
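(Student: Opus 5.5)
The plan is to reduce the block-incremental update to an orthogonal projection onto the augmented space \(\operatorname{Range}([Q_L,U])\) and then apply the Pythagorean theorem.

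\textbf{The augmented basis.} Set \(Q_{L+1}=[Q_L,U]\). The hypotheses \(Q_L^\top Q_L=I\), \(U^\top U=I\) and \(Q_L^\top U=0\) give \(Q_{L+1}^\top Q_{L+1}=I\), so \(Q_{L+1}\) is an orthonormal basis of the augmented response space \(\operatorname{Range}(Q_L)\oplus\operatorname{Range}(U)\).

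\textbf{The augmented least-squares problem.} Because the augmented coefficients come from exact unregularized least squares in the retained space, the augmented problem has the same form as \eqref{eq:theory_lsq} with \(Q_L\) replaced by \(Q_{L+1}\). Its minimizer is therefore \(Q_{L+1}^\top\bm y\), and the residual is
\[
\bm r_{L+1}=\bigl(I-Q_{L+1}Q_{L+1}^\top\bigr)\bm y=\bigl(I-Q_LQ_L^\top-UU^\top\bigr)\bm y .
\]
Since \(U^\top Q_L=0\), we have \(UU^\top\bm y=UU^\top(I-Q_LQ_L^\top)\bm y=UU^\top\bm r_L\). Substituting this gives \eqref{eq:rbe_residual_update_theory}.

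One point needs care. The actual features are the raw block \(C\), or equivalently \(Z=CV\Sigma^{-1}\), not \(U\) itself. By the retained SVD relation, \(Z=Q_L(Q_L^\top Z)+U\), so \(\operatorname{Range}([Q_L,Z])=\operatorname{Range}([Q_L,U])\). The least-squares residual depends only on the range of the trial matrix, not on the particular basis, so solving in the raw features produces the same residual \(\bm r_{L+1}\).

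\textbf{The norm identity.} Let \(P=UU^\top\), which is an orthogonal projector. Then \(\bm r_L=(I-P)\bm r_L+P\bm r_L\) is an orthogonal splitting, and Pythagoras gives
\[
\|\bm r_L\|_2^2=\|\bm r_{L+1}\|_2^2+\|UU^\top\bm r_L\|_2^2 .
\]
Since \(U^\top U=I\), \(\|UU^\top\bm r_L\|_2=\|U^\top\bm r_L\|_2\). Rearranging gives the first equality in \eqref{eq:rbe_residual_identity_theory}. Because \(\bm r_L\neq\bm 0\), we can factor out \(\|\bm r_L\|_2^2\) and use the definition of \(s_C^{(0)}\) to obtain the second equality.

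The only step that is not purely mechanical is the basis-independence argument above, that is, showing that augmenting by \(\operatorname{Range}(U)\) is the same least-squares problem as augmenting by the retained raw features. The discarded SVD components \(E_{\rm trunc}\) are excluded by the definition of the augmented space in the statement, so they play no role.
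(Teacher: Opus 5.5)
Your proof is correct and follows the paper's argument essentially step for step: you show that \([Q_L,U]\) is orthonormal, write \(\bm r_{L+1}=(I-Q_LQ_L^\top-UU^\top)\bm y\), replace \(UU^\top\bm y\) by \(UU^\top\bm r_L\) using \(U^\top Q_L=0\), and finish with the Pythagorean splitting for the orthogonal projector \(UU^\top\). Your extra basis-independence remark is valid for the composite features \(Z=CV\Sigma^{-1}\), since \(E_{\rm trunc}V=0\) gives \((I-Q_LQ_L^\top)Z=U\); it is not valid for the raw block \(C\), which also spans the discarded directions whenever \(E_{\rm trunc}\neq 0\), so you should drop ``or equivalently''. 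As you note at the end, this does not affect the proof, because the statement defines the augmented space through \(\operatorname{Range}(U)\).
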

\begin{proof}
    By construction, \(Q_L^\top U=0\) and \(U^\top U=I\). Hence \([Q_L,U]\) is an orthonormal basis of the augmented response space, and its least-squares residual is
    \[
    \begin{aligned}
        \bm r_{L+1} &=\bm y-[Q_L,U][Q_L,U]^\top\bm y\\ &=(I-Q_LQ_L^\top)\bm y-UU^\top\bm y =\bm r_L-UU^\top\bm r_L,
    \end{aligned}
    \]
    where the last equality follows from \(U^\top Q_L=0\). Since \(UU^\top\) is an orthogonal projector,
    \[ \|\bm r_{L+1}\|_2^2 = \|\bm r_L\|_2^2-\|U^\top\bm r_L\|_2^2. \]
    The definition of \(s_C^{(0)}\) completes the proof.
\end{proof}

\begin{remark}[Implementation score]
    Implementation uses the regularized score \(s_F\) for screening and stopping, whereas the exact identity uses \(s_C^{(0)}\) on the same retained block. Solution-dependent trials are reassembled and accepted only when the original nonlinear residual decreases.
\end{remark}

\subsection{Conditional PDE solution-error estimate}

Let $V$ be a Hilbert solution space on which the differential operator and the relevant boundary and initial traces are well defined, and let $Y_r$, $Y_b$, and $Y_0$ be the corresponding residual spaces. For a fixed linear problem, define the continuous residual loss by
\begin{equation}\label{eq:continuous_residual_loss}
    \begin{aligned}
        \mathscr L(v) ={}& \|\mathcal Lv-f\|_{Y_r}^2 + \eta_b\|\mathcal Bv-g\|_{Y_b}^2 + \eta_0\|\mathcal Iv-h\|_{Y_0}^2.
    \end{aligned}
\end{equation}

Assume that there exist constants \(C_{\rm s},C_{\rm c}>0\) such that
\begin{equation}\label{eq:graph_norm_theory}
    \begin{aligned}
        C_{\rm s}\|v\|_V^2 \le{}& \|\mathcal Lv\|_{Y_r}^2 + \eta_b\|\mathcal Bv\|_{Y_b}^2 + \eta_0\|\mathcal Iv\|_{Y_0}^2 \le C_{\rm c}\|v\|_V^2, \qquad v\in V.
    \end{aligned}
\end{equation}
This graph-norm stability condition is problem-dependent, as in~\cite{dang2024adaptive}. For the two-peak Poisson benchmark in Section~\ref{sec:twopeak_poisson}, this framework yields an explicit \(L^2\)-error estimate; its proof is given in Appendix~\ref{app:poisson_residual_bound}. For general linear problems, the corresponding residual-to-solution control is expressed through the graph-norm condition above.

Let \(u^\ast\in V\) be the exact solution satisfying
\[ \mathcal Lu^\ast=f, \qquad \mathcal Bu^\ast=g, \qquad \mathcal Iu^\ast=h. \]
Since \(\mathcal L\), \(\mathcal B\), and \(\mathcal I\) are linear, applying \eqref{eq:graph_norm_theory} to \(v-u^\ast\) gives
\begin{equation}\label{eq:loss_error_theory}
    C_{\rm s}\|v-u^\ast\|_V^2 \le \mathscr L(v) \le C_{\rm c}\|v-u^\ast\|_V^2.
\end{equation}

Let \(\bm R_r(v)\), \(\bm R_b(v)\), and \(\bm R_0(v)\) collect the sampled interior, boundary, and initial residuals, respectively, and let \(\bm W_r=\operatorname{diag}(\omega_i^r)\), \(\bm W_b=\operatorname{diag}(\omega_i^b)\), and \(\bm W_0=\operatorname{diag}(\omega_i^0)\). The empirical residual loss is
\begin{equation}\label{eq:empirical_residual_loss}
    \widehat{\mathscr L}_N(v) =\|\bm W_r^{1/2}\bm R_r(v)\|_2^2 +\eta_b\|\bm W_b^{1/2}\bm R_b(v)\|_2^2 +\eta_0\|\bm W_0^{1/2}\bm R_0(v)\|_2^2 .
\end{equation}
The weights are therefore incorporated into the weighted least-squares matrix and right-hand-side vector. Consequently,
\[ \widehat{\mathscr L}_N(u_L) = \|Q_LQ_L^\top\bm y-\bm y\|_2^2 = \|\bm r_L\|_2^2. \]

For two consecutive RBE approximations computed in the retained composite trial spaces, define
\begin{equation}\label{eq:discrepancy_theory}
    \Delta_{N,L} = \max_{v\in\{u_L,u_{L+1}\}} \left| \mathscr L(v) - \widehat{\mathscr L}_N(v) \right|.
\end{equation}
Thus \(\Delta_{N,L}\) measures the empirical-to-continuous residual discrepancy at the two consecutive approximations \(u_L\) and \(u_{L+1}\).

\begin{theorem}[Conditional solution-error estimate for RBE]
    \label{thm:rbe_conditional_error}

    Suppose that the graph-norm stability condition \eqref{eq:graph_norm_theory} holds and that the two consecutive approximations are exact unregularized least-squares projections for the same fixed linear residual operator and empirical residual rule. Let \(s_C^{(0)}\) denote the unregularized capture score of the accepted SVD-truncated block. Then the main one-step estimate is
    \begin{equation}\label{eq:rbe_error_bound_theory}
        \begin{aligned}
            \|u_{L+1}-u^\ast\|_V^2 \le\;& \frac{(1-s_C^{(0)})C_{\rm c}}{C_{\rm s}} \|u_L-u^\ast\|_V^2 + \frac{2-s_C^{(0)}}{C_{\rm s}} \Delta_{N,L}.
        \end{aligned}
    \end{equation}
\end{theorem}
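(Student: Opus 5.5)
Chain the continuous lower bound at $u_{L+1}$, the consistency gap, the exact discrete identity of Theorem~\ref{thm:rbe_residual_decrease}, the consistency gap again, and the continuous upper bound at $u_L$.

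\emph{Step 1 (lower bound at $u_{L+1}$).} The left inequality of \eqref{eq:loss_error_theory} with $v=u_{L+1}$, together with the definition \eqref{eq:discrepancy_theory} of $\Delta_{N,L}$, gives
\[
C_{\rm s}\|u_{L+1}-u^\ast\|_V^2\le \mathscr L(u_{L+1})\le \widehat{\mathscr L}_N(u_{L+1})+\Delta_{N,L}.
\]

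\emph{Step 2 (discrete reduction).} Both approximations are exact unregularized least-squares projections for the same fixed residual map and collocation rule. Hence $\widehat{\mathscr L}_N(u_{L+1})=\|\bm r_{L+1}\|_2^2$ and $\widehat{\mathscr L}_N(u_L)=\|\bm r_L\|_2^2$. If $\bm r_L\neq\bm 0$, Theorem~\ref{thm:rbe_residual_decrease} yields
\[
\widehat{\mathscr L}_N(u_{L+1})=(1-s_C^{(0)})\,\widehat{\mathscr L}_N(u_L).
\]
If $\bm r_L=\bm 0$, the projection is already exact, so $\bm r_{L+1}=\bm 0$ and the same relation holds trivially for any value of $s_C^{(0)}\in[0,1]$.

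\emph{Step 3 (back to the continuous loss at $u_L$).} Consistency at $u_L$ gives $\widehat{\mathscr L}_N(u_L)\le \mathscr L(u_L)+\Delta_{N,L}$. The right inequality of \eqref{eq:loss_error_theory} gives $\mathscr L(u_L)\le C_{\rm c}\|u_L-u^\ast\|_V^2$. The factor $1-s_C^{(0)}$ is nonnegative, because $UU^\top$ is an orthogonal projector and so $s_C^{(0)}\in[0,1]$. The inequalities can therefore be multiplied through without reversal.

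\emph{Step 4 (combine).} Putting the steps together,
\[
C_{\rm s}\|u_{L+1}-u^\ast\|_V^2\le (1-s_C^{(0)})C_{\rm c}\|u_L-u^\ast\|_V^2+\bigl[(1-s_C^{(0)})+1\bigr]\Delta_{N,L}.
\]
The bracket equals $2-s_C^{(0)}$, and dividing by $C_{\rm s}$ gives \eqref{eq:rbe_error_bound_theory}.

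The main point needing care is Step 2. One must check that $\widehat{\mathscr L}_N(u_L)$ really equals $\|\bm r_L\|_2^2$, which the paper asserts for weighted least squares. One must also check that $u_{L+1}$, defined through the transformed features $Z=CV\Sigma^{-1}$, has empirical residual exactly $\bm r_{L+1}$. This uses the fact that the retained composite response space is $\operatorname{Range}([Q_L,U])$, which holds when the retained SVD relation is exact. Beyond these identifications and the sign of $1-s_C^{(0)}$, the argument is a direct chain of inequalities.
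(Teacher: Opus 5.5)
Your proposal is correct and follows the paper's proof: the same chain of the lower graph-norm bound plus the discrepancy at $u_{L+1}$, the identity $\widehat{\mathscr L}_N(u_{L+1})=(1-s_C^{(0)})\widehat{\mathscr L}_N(u_L)$ from Theorem~\ref{thm:rbe_residual_decrease}, and then the discrepancy plus the upper bound at $u_L$. Your extra checks, namely $1-s_C^{(0)}\ge 0$ and the identifications $\widehat{\mathscr L}_N(u_L)=\|\bm r_L\|_2^2$, $\widehat{\mathscr L}_N(u_{L+1})=\|\bm r_{L+1}\|_2^2$, are left implicit in the paper, and they are exactly what justify its steps.
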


\begin{proof}
    By \eqref{eq:loss_error_theory} and \eqref{eq:discrepancy_theory},
    \[
    \begin{aligned}
        C_{\rm s}\|u_{L+1}-u^\ast\|_V^2 &\le \widehat{\mathscr L}_N(u_{L+1})+\Delta_{N,L} =(1-s_C^{(0)})\widehat{\mathscr L}_N(u_L)+\Delta_{N,L}\\ &\le(1-s_C^{(0)})\big(C_{\rm c}\|u_L-u^\ast\|_V^2 +\Delta_{N,L}\big)+\Delta_{N,L},
    \end{aligned}
    \]
    where the equality follows from \eqref{eq:rbe_residual_identity_theory}.
\end{proof}

\begin{remark}[Conditional error interpretation]
    Theorem~\ref{thm:rbe_conditional_error} separates the effect of residual projection from the empirical-to-continuous consistency term. When the two losses agree, the estimate reduces to a contraction criterion:
    \[ s_C^{(0)}>1-\frac{C_{\rm s}}{C_{\rm c}}. \]
\end{remark}

\subsection{Finite-horizon randomized enrichment and SFRBE error control}
\label{sec:finite_horizon}

The actual capture factors can be accumulated along accepted fixed-linear updates. Together with the SFR safeguard, this relates the final error to the pre-SFR baseline without imposing a uniform statistical bound over an unbounded trial space. Let $V_{\rm base}\subset V$ be the finite-dimensional pre-SFR trial space, $u_{\rm base}$ its exact empirical least-squares minimizer, and $u_0$ the accepted or restored post-SFR state. Define
\[ v_{\rm base}^\star\in\arg\min_{v\in V_{\rm base}} \|v-u^\ast\|_V^2,\qquad A_{\rm base}=\|v_{\rm base}^\star-u^\ast\|_V^2, \]
and, for $L$ accepted RBE blocks,
\[ \Pi_L=\prod_{\ell=0}^{L-1}(1-s_\ell^{(0)}),\qquad\Pi_0=1, \]
where $s_\ell^{(0)}$ is the unregularized capture score of the retained block from \eqref{eq:rbe_exact_score}.

\begin{theorem}[Finite-horizon RBE capture and safeguarded SFRBE error]
    \label{thm:finite_horizon_sfrbe}
    Fix $L\ge1$ and consider the same fixed affine residual map \emph{(including its right-hand side)}, weighted collocation rule, and exact unregularized projection onto retained response spaces throughout SFR and the $L$ accepted RBE updates. Assume the graph-norm equivalence \eqref{eq:graph_norm_theory} and the SFR safeguard \eqref{eq:sfr_accept}. Suppose, for this realized final state, that the \emph{two-point} empirical-consistency estimates hold:
    \begin{equation}\label{eq:finite_horizon_consistency}
        \begin{aligned}
            \big|\mathscr L(v_{\rm base}^\star)- \widehat{\mathscr L}_N(v_{\rm base}^\star)\big| \le\varepsilon_{\rm base},\qquad \big|\mathscr L(u_L)-\widehat{\mathscr L}_N(u_L)\big| \le\varepsilon_L,
        \end{aligned}
    \end{equation}
    where $\varepsilon_{\rm base},\varepsilon_L\ge0$. Then every such successful trajectory satisfies
    \begin{equation}\label{eq:finite_horizon_bound}
        \displaystyle \|u_L-u^\ast\|_V^2\le \frac{C_{\rm c}}{C_{\rm s}}\Pi_L A_{\rm base} +\frac{\Pi_L\varepsilon_{\rm base}+\varepsilon_L}{C_{\rm s}}.
    \end{equation}
    In particular, if the implementation score of every accepted block is at least $\tau_{\rm stop}\in(0,1)$, then $\Pi_L\le(1-\tau_{\rm stop})^L$.

    For the random-search statement, let $G_\ell$ be the event that the \emph{complete} random candidate-generation, SVD truncation, conditioning, and prefix-selection procedure at stage $\ell$ finds an admissible block passing the stopping threshold. If, conditional on every preceding successful history $\mathcal F_\ell$,
    \begin{equation}\label{eq:finite_horizon_success}
        \mathbb P(G_\ell\mid\mathcal F_\ell)\ge p_0>0, \qquad \ell=0,\ldots,L-1,
    \end{equation}
    then $\mathbb P(G_0\cap\cdots\cap G_{L-1})\ge p_0^L$. If \eqref{eq:finite_horizon_consistency} holds with $\varepsilon_{\rm base},\varepsilon_L\le\varepsilon_N$ on every such successful history, then, with probability at least $p_0^L$,
    \begin{equation}\label{eq:finite_horizon_geometric}
        \|u_L-u^\ast\|_V^2\le \frac{C_{\rm c}}{C_{\rm s}}(1-\tau_{\rm stop})^L A_{\rm base} +\frac{2\varepsilon_N}{C_{\rm s}}.
    \end{equation}
    More generally, suppose an event $H_L$ on the ambient probability space satisfies $\mathbb P(H_L)\ge1-\delta_N$ and enforces the same two-point bounds on every successful path in $H_L$. Then \eqref{eq:finite_horizon_geometric} holds with probability at least $\max\{0,p_0^L-\delta_N\}$.
\end{theorem}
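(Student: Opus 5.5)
The argument chains the empirical residual from the baseline state through SFR and the $L$ accepted RBE blocks, and only converts to the $V$-norm at the two endpoints.

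\emph{Step 1: iterate the RBE identity.} Applying Theorem~\ref{thm:rbe_residual_decrease} at each stage $\ell=0,\dots,L-1$ gives $\|\bm r_{\ell+1}\|_2^2=(1-s_\ell^{(0)})\|\bm r_\ell\|_2^2$. Since $\widehat{\mathscr L}_N(u_\ell)=\|\bm r_\ell\|_2^2$, this telescopes to
\[
\widehat{\mathscr L}_N(u_L)=\Pi_L\,\widehat{\mathscr L}_N(u_0).
\]

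\emph{Step 2: SFR safeguard and minimality.} With $\eta_{\rm acc}=0$, the safeguard \eqref{eq:sfr_accept}, or restoration of the old basis if the refresh is rejected, gives $\widehat{\mathscr L}_N(u_0)\le\widehat{\mathscr L}_N(u_{\rm base})$. Because $u_{\rm base}$ minimizes the empirical loss over $V_{\rm base}$ and $v_{\rm base}^\star\in V_{\rm base}$, we have $\widehat{\mathscr L}_N(u_{\rm base})\le\widehat{\mathscr L}_N(v_{\rm base}^\star)$. The map, right-hand side and collocation rule are fixed throughout, so all these empirical losses refer to the same functional.

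\emph{Step 3: endpoint conversions.} At the final state, \eqref{eq:loss_error_theory} and the second bound in \eqref{eq:finite_horizon_consistency} give
\[
C_{\rm s}\|u_L-u^\ast\|_V^2\le\mathscr L(u_L)\le\widehat{\mathscr L}_N(u_L)+\varepsilon_L .
\]
At the baseline end, the first bound in \eqref{eq:finite_horizon_consistency} and \eqref{eq:loss_error_theory} give
\[
\widehat{\mathscr L}_N(v_{\rm base}^\star)\le\mathscr L(v_{\rm base}^\star)+\varepsilon_{\rm base}\le C_{\rm c}A_{\rm base}+\varepsilon_{\rm base}.
\]
Chaining these with Steps 1 and 2 yields \eqref{eq:finite_horizon_bound}.

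The subtle point is the monotonicity bound $\Pi_L\le(1-\tau_{\rm stop})^L$, because the threshold is stated for the regularized score $s_F$, not for $s^{(0)}$. The two scores use the same retained basis and differ only in the denominator, $\|\bm r_L\|_2^2+\varepsilon_n$ versus $\|\bm r_L\|_2^2$. Hence $s_\ell^{(0)}\ge s_\ell\ge\tau_{\rm stop}$, so each factor satisfies $1-s_\ell^{(0)}\le 1-\tau_{\rm stop}$. The product bound then follows, and all factors are nonnegative since $s^{(0)}\le1$ for a projection.

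\emph{Step 4: probability of success.} I will use the chain rule $\mathbb P(G_0\cap\cdots\cap G_{L-1})=\prod_\ell\mathbb P(G_\ell\mid G_0\cap\cdots\cap G_{\ell-1})$. Each conditional factor is bounded below by $p_0$, by averaging \eqref{eq:finite_horizon_success} over the successful histories $\mathcal F_\ell$ (tower property), and this gives $p_0^L$. On that success event, the consistency bounds $\varepsilon_{\rm base},\varepsilon_L\le\varepsilon_N$ together with $\Pi_L\le1$ bound the consistency term by $2\varepsilon_N/C_{\rm s}$, which gives \eqref{eq:finite_horizon_geometric}.

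For the general version, the bound holds on $(G_0\cap\cdots\cap G_{L-1})\cap H_L$. Since $\mathbb P(E\cap H)\ge\mathbb P(E)-\mathbb P(H^c)$, this event has probability at least $p_0^L-\delta_N$, and probabilities are nonnegative, so the bound holds with probability at least $\max\{0,p_0^L-\delta_N\}$.

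I expect the main obstacle to be bookkeeping rather than analysis. One must check that the conditioning in \eqref{eq:finite_horizon_success} is on the $\sigma$-algebra generated by the successful history, so that the tower step is valid. One must also check that the empirical losses before and after SFR are computed with the same fixed weighted rule, so that the safeguard comparison is legitimate.
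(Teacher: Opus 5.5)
Your proposal is correct and follows essentially the same route as the paper's proof. The SFR safeguard and baseline empirical optimality give $\widehat{\mathscr L}_N(u_0)\le\widehat{\mathscr L}_N(v_{\rm base}^\star)$, iterating the residual identity gives the factor $\Pi_L$, and the graph-norm equivalence with the two consistency bounds handles the endpoints. The comparison $s_\ell^{(0)}\ge s_\ell\ge\tau_{\rm stop}$ gives the geometric factor, and conditional multiplication with $\mathbb P(G_0\cap\cdots\cap G_{L-1}\cap H_L)\ge p_0^L-\delta_N$ gives the probability claims; your remarks that each factor $1-s_\ell^{(0)}$ is nonnegative and that the tower property justifies the chain-rule step are clarifications the paper leaves implicit.
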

\begin{proof}
    By the SFR safeguard and baseline empirical optimality, $\widehat{\mathscr L}_N(u_0) \le\widehat{\mathscr L}_N(v_{\rm base}^\star)$. Iterating \eqref{eq:rbe_residual_identity_theory} for the unchanged weighted affine residual map gives $\widehat{\mathscr L}_N(u_L)=\Pi_L\widehat{\mathscr L}_N(u_0)$. Consequently, \eqref{eq:loss_error_theory} and the two estimates in \eqref{eq:finite_horizon_consistency} imply
    \[
    \begin{aligned}
        C_{\rm s}\|u_L-u^\ast\|_V^2 &\le\widehat{\mathscr L}_N(u_L)+\varepsilon_L\\ &\le\Pi_L\big(\mathscr L(v_{\rm base}^\star) +\varepsilon_{\rm base}\big)+\varepsilon_L\\ &\le\Pi_L C_{\rm c}A_{\rm base} +\Pi_L\varepsilon_{\rm base}+\varepsilon_L.
    \end{aligned}
    \]
    The regularized capture score is no greater than its unregularized counterpart, giving the geometric factor. Conditional multiplication of success probabilities yields $p_0^L$; intersection with the consistency event and the union bound yield the final probability claim.
\end{proof}

\begin{remark}[Conditional coverage and scope]
    \label{rem:finite_coverage}
    The event $G_\ell$ concerns the \emph{final admissible block}, not an unscreened neuron. For example, set $Z_\ell=0$ on an unsuccessful full search and otherwise let it be the selected block's regularized capture score. Since $0\le Z_\ell\le \mathbf{1}_{G_\ell}$, a conditional bound $\mathbb E[Z_\ell\mid\mathcal F_\ell]\ge a>0$ implies \eqref{eq:finite_horizon_success} with $p_0=a$. Neither this coverage condition nor the two-point consistency bound follows automatically from residual-weighted sampling; in particular, consistency for the data-selected $u_L$ requires independent validation or suitable complexity control. The positive stopping threshold and possible search failure do not establish unconditional infinite-horizon convergence.
\end{remark}

\begin{remark}[Nonlinear updates]
    For solution-dependent Picard or Newton methods, the fixed-linear results describe the exact projection step for the corresponding frozen subproblem. After the operator is updated, the enlarged system is reassembled and the original nonlinear residual is recomputed for the acceptance test \eqref{eq:rbe_accept}.
\end{remark}

\begin{lemma}[One-step estimate for a frozen nonlinear operator]
    \label{lem:nonlinear_safeguard}
    Let $\mathcal F,\mathcal F_k:X\to Y$ act between normed spaces and satisfy $\mathcal F_k(u_k)=\mathcal F(u_k)$. Suppose the trial state $\widetilde u_{k+1}$ satisfies
    \[ \|\mathcal F_k(\widetilde u_{k+1})\|_Y \le\alpha_k\|\mathcal F(u_k)\|_Y,\quad 0\le\alpha_k<1, \]
    and $\|\mathcal F(\widetilde u_{k+1})-\mathcal F_k(\widetilde u_{k+1})\|_Y \le\omega_k\|\widetilde u_{k+1}-u_k\|_X$. Then
    \begin{equation}\label{eq:nonlinear_one_step_bound}
        \|\mathcal F(\widetilde u_{k+1})\|_Y \le\alpha_k\|\mathcal F(u_k)\|_Y +\omega_k\|\widetilde u_{k+1}-u_k\|_X.
    \end{equation}
    In particular, a non-increase of the true residual follows if
    \begin{equation}\label{eq:nonlinear_one_step_accept}
        \alpha_k\|\mathcal F(u_k)\|_Y +\omega_k\|\widetilde u_{k+1}-u_k\|_X \le\|\mathcal F(u_k)\|_Y.
    \end{equation}
    For the Newton model $\mathcal F_k(v)=\mathcal F(u_k) +\mathcal F'(u_k)(v-u_k)$, a Jacobian with Lipschitz constant $L_J$ on the trial segment yields $\|\mathcal F(\widetilde u_{k+1})-\mathcal F_k(\widetilde u_{k+1})\|_Y \le\tfrac12L_J\|\widetilde u_{k+1}-u_k\|_X^2$.
\end{lemma}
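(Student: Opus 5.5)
The plan is to use the triangle inequality in $Y$, splitting the true residual at the trial state into the frozen-model residual and the modeling defect. Specifically, I would write
\[
\mathcal F(\widetilde u_{k+1})=\mathcal F_k(\widetilde u_{k+1})+\bigl(\mathcal F(\widetilde u_{k+1})-\mathcal F_k(\widetilde u_{k+1})\bigr).
\]
Taking norms, I would bound the first term by the contraction hypothesis, giving $\alpha_k\|\mathcal F(u_k)\|_Y$. The consistency $\mathcal F_k(u_k)=\mathcal F(u_k)$ is already built into this hypothesis. I would bound the second term by the defect hypothesis, giving $\omega_k\|\widetilde u_{k+1}-u_k\|_X$. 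Adding the two bounds yields \eqref{eq:nonlinear_one_step_bound}.

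The non-increase statement then follows at once: if the right-hand side of \eqref{eq:nonlinear_one_step_bound} is at most $\|\mathcal F(u_k)\|_Y$, which is exactly \eqref{eq:nonlinear_one_step_accept}, then $\|\mathcal F(\widetilde u_{k+1})\|_Y\le\|\mathcal F(u_k)\|_Y$.

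For the Newton model, the only substantive step is the Taylor remainder estimate, and I would carry it out as follows.
\begin{enumerate}
\item Set $h=\widetilde u_{k+1}-u_k$ and assume $\mathcal F$ is Fréchet differentiable on the segment $\{u_k+th: t\in[0,1]\}$, with $\|\mathcal F'(a)-\mathcal F'(b)\|\le L_J\|a-b\|_X$ there. Here $Y$ should be taken complete, or at least the integral should be understood weakly.
\item Use the fundamental theorem of calculus along the segment:
\[
\mathcal F(u_k+h)-\mathcal F(u_k)-\mathcal F'(u_k)h=\int_0^1\bigl(\mathcal F'(u_k+th)-\mathcal F'(u_k)\bigr)h\,dt .
\]
\item Bound the integrand by $L_J t\|h\|_X^2$ and integrate to obtain $\tfrac12L_J\|h\|_X^2$.
\end{enumerate}

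The one technical point I expect is justifying this integral representation in a general normed space. If $Y$ is not complete, I would avoid Bochner integrals. Instead, I would apply the mean-value inequality to the auxiliary map $g(t)=\mathcal F(u_k+th)-t\mathcal F'(u_k)h$. Alternatively, I would compose with a norming functional $y^*\in Y^*$ and apply the scalar fundamental theorem of calculus to $t\mapsto y^*(g(t))$. Either route gives the same bound.

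The remark then follows by taking $\omega_k=\tfrac12L_J\|\widetilde u_{k+1}-u_k\|_X$ in \eqref{eq:nonlinear_one_step_bound}.
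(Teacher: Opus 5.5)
Your proposal is correct and matches the paper's proof. The paper also uses the triangle inequality for the one-step bound, and for the Newton case the integral remainder $\int_0^1[\mathcal F'(u_k+td)-\mathcal F'(u_k)]d\,dt$ with the Lipschitz bound $L_Jt\|d\|_X^2$.

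Your norming-functional fallback correctly avoids needing $Y$ complete. If you use the mean-value route instead, the crude form $\sup_t\|g'(t)\|$ gives only $L_J\|h\|_X^2$. To keep the factor $\tfrac12$ you need the comparison version with $\psi(t)=\tfrac12L_Jt^2\|h\|_X^2$.
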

\begin{proof}
    The first estimate is the triangle inequality. For Newton, with $d=\widetilde u_{k+1}-u_k$, write the operator difference as $\int_0^1[\mathcal F'(u_k+td)-\mathcal F'(u_k)]d\,dt$ and use $\int_0^1tL_J\|d\|_X^2dt=\tfrac12L_J\|d\|_X^2$.
\end{proof}

\subsection{Scope of the theoretical statements}

\label{sec:theory_scope}

Theorems~\ref{thm:rbe_residual_decrease}--\ref{thm:finite_horizon_sfrbe} apply to a single fixed affine residual map, including its right-hand side and collocation weights. The final-state consistency assumption is conditional; uniform statistical bounds of this type, of order $m\sqrt{\log m/N}$ in the width $m$ and the number of collocation points $N$, are available for fixed-width trial spaces with bounded hidden parameters and i.i.d.\ uniform collocation points~\cite{dang2024adaptive}. For Picard/Newton iterations, the fixed-linear projection identity concerns only a frozen subproblem, not the original nonlinear PDE; accepted nonlinear trials are separately checked by Lemma~\ref{lem:nonlinear_safeguard}. 

\section{Numerical experiments}
\label{sec:numerical}

\subsection{Experimental setup}
\label{sec:experiment_setup}
We test the two adaptive modules on localized and multiscale elliptic problems, variable-coefficient Darcy flow, and nonlinear space--time KdV and Burgers equations. In most experiments, the value error is measured by the relative \(L^2\) error:
\begin{equation}\label{eq:numerical_rel_l2_error}
    {\rm Rel.}\ L^2 = \frac{ \|u_h-u_{\rm ref}\|_{L^2(\Omega)} }{ \|u_{\rm ref}\|_{L^2(\Omega)} } .
\end{equation}

For the reported gradient errors, we use the full-coordinate $H^1$ norm
\begin{equation}\label{eq:numerical_rel_h1_error}
    \|v\|_{H^1(\Omega)}^2=\|v\|_{L^2(\Omega)}^2+\|\nabla_{\boldsymbol z}v\|_{L^2(\Omega)}^2, \qquad {\rm Rel.}\ H^1=\frac{\|u_h-u_{\rm ref}\|_{H^1(\Omega)}}{\|u_{\rm ref}\|_{H^1(\Omega)}} .
\end{equation}
Here \(u_h\) denotes the computed solution and \(u_{\rm ref}\) denotes either the exact solution, when available, or a high-accuracy reference solution generated by a reliable traditional numerical method. The vector $\boldsymbol z$ contains all independent coordinates, so the gradient includes the temporal derivative in space--time problems. The integrals in \eqref{eq:numerical_rel_l2_error} and \eqref{eq:numerical_rel_h1_error} are evaluated by numerical quadrature on the prescribed evaluation grid.

Here ``AD--RaNN'' denotes the global-scale adaptive-distribution initialization used as the initial space and a comparison baseline; it is distinct from the parameter search in Appendix~\ref{app:rbe_parameter_refinement}.

All numerical experiments are implemented in MATLAB, where the least-squares systems, residual evaluations, SFR updates, and RBE procedures are assembled and solved. The schematic neural-network and algorithmic illustrations are generated in Python using Matplotlib. The two-peak Poisson and periodic KdV results are averaged over ten MATLAB random seeds $0,\ldots,9$, whereas the other reported experiments use seed $4$ unless otherwise stated.

All spatial and temporal derivatives of the random features are evaluated analytically. Unless otherwise stated, ``Features'' and ``Total features'' denote the stage-wise and cumulative numbers of raw randomized features, respectively.

\subsection{Two-peak Poisson benchmark: ablation and error analysis}
\label{sec:twopeak_poisson}

\paragraph{Problem setup, numerical results, and ablation}
The first test considers the Poisson problem
\begin{equation}\label{eq:twopeak_poisson}
    \begin{aligned}
        -\Delta u &= f, && (x,y)\in\Omega,\\ u &= g, && (x,y)\in\partial\Omega,
    \end{aligned}
    \qquad \Omega=(0,1)^2.
\end{equation}
The source term \(f\) and the Dirichlet boundary data \(g\) are generated from the manufactured exact solution
\[
\begin{aligned}
    u &= \exp\left( -\frac{(x-0.30)^2+(y-0.35)^2}{0.02^2} \right) +0.85\exp\left( -\frac{(x-0.72)^2+(y-0.68)^2}{0.05^2} \right).
\end{aligned}
\]

The manufactured solution contains two localized Gaussian peaks and is used to test whether the proposed structure-aware mechanism can detect multiple separated residual structures and redistribute the randomized feature anchors accordingly. After the SFR refresh, RBE is applied to further resolve the remaining localized and oscillatory residual components.

\begin{table}[!ht]
    \centering
    \caption{Numerical results for the two-peak Poisson equation~\eqref{eq:twopeak_poisson} (mean / best / worst over seeds \(0,\ldots,9\)). ``Features'' denotes the cumulative raw randomized-feature count reported at each stage; SFR keeps a fixed-width trial space. The values after $\pm$ denote one standard deviation over the ten seeds. Times are mean stage wall-clock times.}
    \label{tab:poisson_twopeak}
    \renewcommand{\arraystretch}{1.18}
    \setlength{\tabcolsep}{8pt}
    \resizebox{\linewidth}{!}{%
    \begin{tabular}{cccccc}
        \toprule Stage & Features (avg.) & Time (s, avg.) & Rel. \(L^2\) (avg.) & Rel. \(L^2\) (best) & Rel. \(L^2\) (worst)\\
        \midrule AD--RaNN & \(3500\) & \(8.477\) & \(2.068\mathrm{e}{-02}\pm1.950\mathrm{e}{-03}\) & \(1.829\mathrm{e}{-02}\) & \(2.327\mathrm{e}{-02}\)\\
        \midrule SFR & \(3500\) & \(24.555\) & \(4.866\mathrm{e}{-03}\pm1.171\mathrm{e}{-03}\) & \(3.609\mathrm{e}{-03}\) & \(6.974\mathrm{e}{-03}\)\\
        \midrule RBE & \(6450\) & \(93.250\) & \(1.164\mathrm{e}{-06}\pm2.114\mathrm{e}{-06}\) & \(8.588\mathrm{e}{-08}\) & \(7.096\mathrm{e}{-06}\)\\
        \bottomrule
    \end{tabular}}
\end{table}

\begin{table}[!ht]
    \centering
    \small
    \setlength{\tabcolsep}{4pt}
    \caption{Mean accepted RBE features by activation family over Poisson seeds \(0,\ldots,9\).}
    \label{tab:poisson_family_usage}
    \begin{tabular}{@{}cccccc@{}}
        \toprule Family & \texttt{tanh} & \texttt{ridge\_gauss} & \texttt{sin} & \texttt{radial\_gauss} & \texttt{mexican\_hat} \\
        \midrule Mean accepted features & 380 & 370 & 830 & 480 & 890 \\
        \bottomrule
    \end{tabular}
\end{table}

\begin{figure}[!ht]
    \centering
    \includegraphics[width=0.32\textwidth]{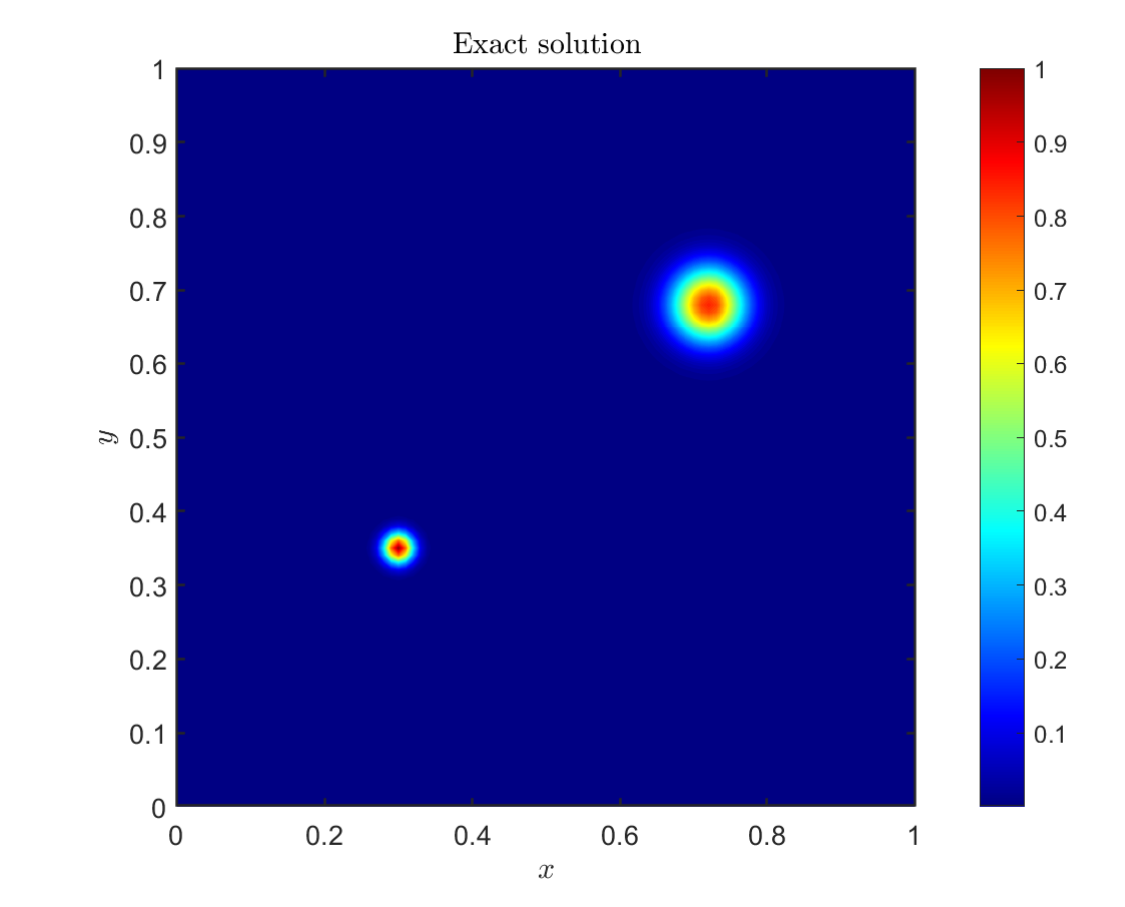}
    \includegraphics[width=0.32\textwidth]{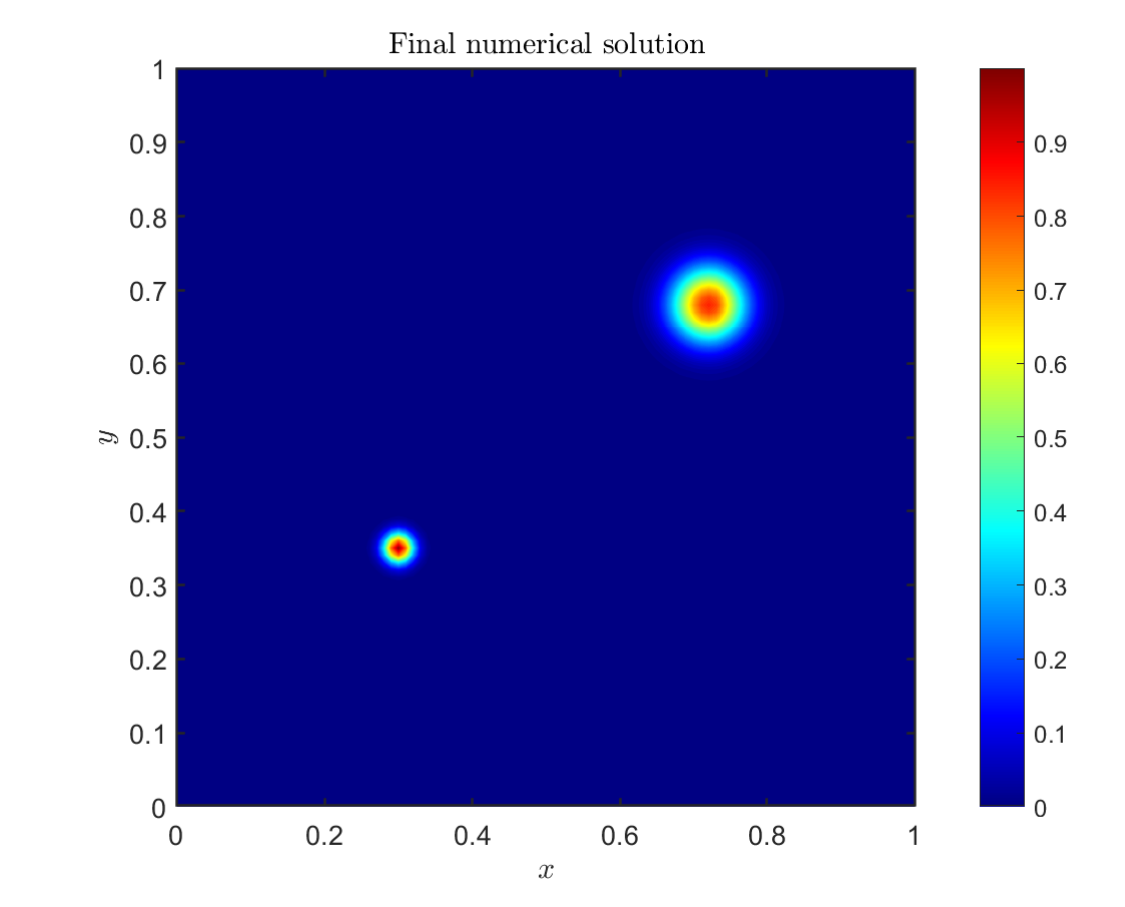}
    \includegraphics[width=0.32\textwidth]{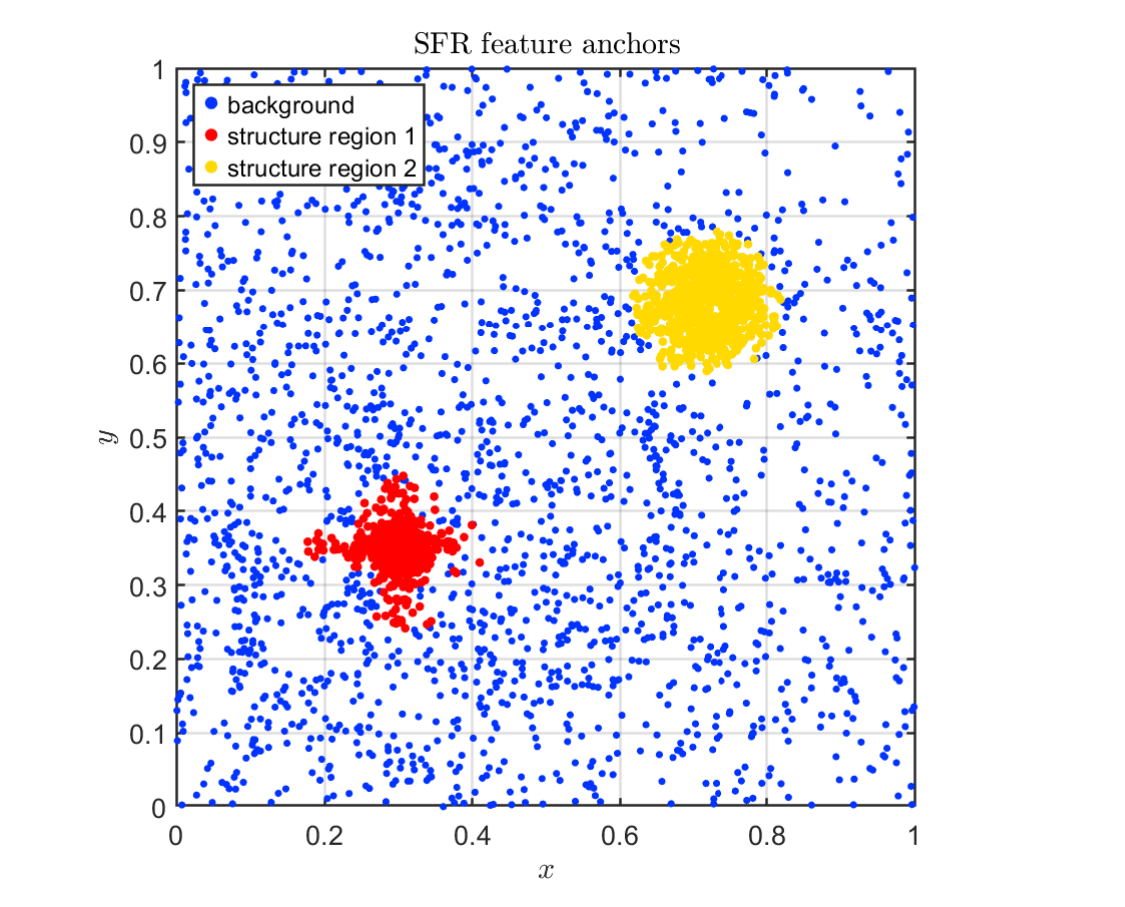}
    \caption{ Two-peak Poisson equation. Exact solution (left), final numerical prediction after RBE enrichment (middle), and SFR feature-anchor distribution (right). In the feature-anchor plot, different colors distinguish the detected structure regions and the background region. }
    \label{fig:poisson_twopeak_solution_center}
\end{figure}

\begin{figure}[!ht]
    \centering
    \includegraphics[width=0.32\textwidth]{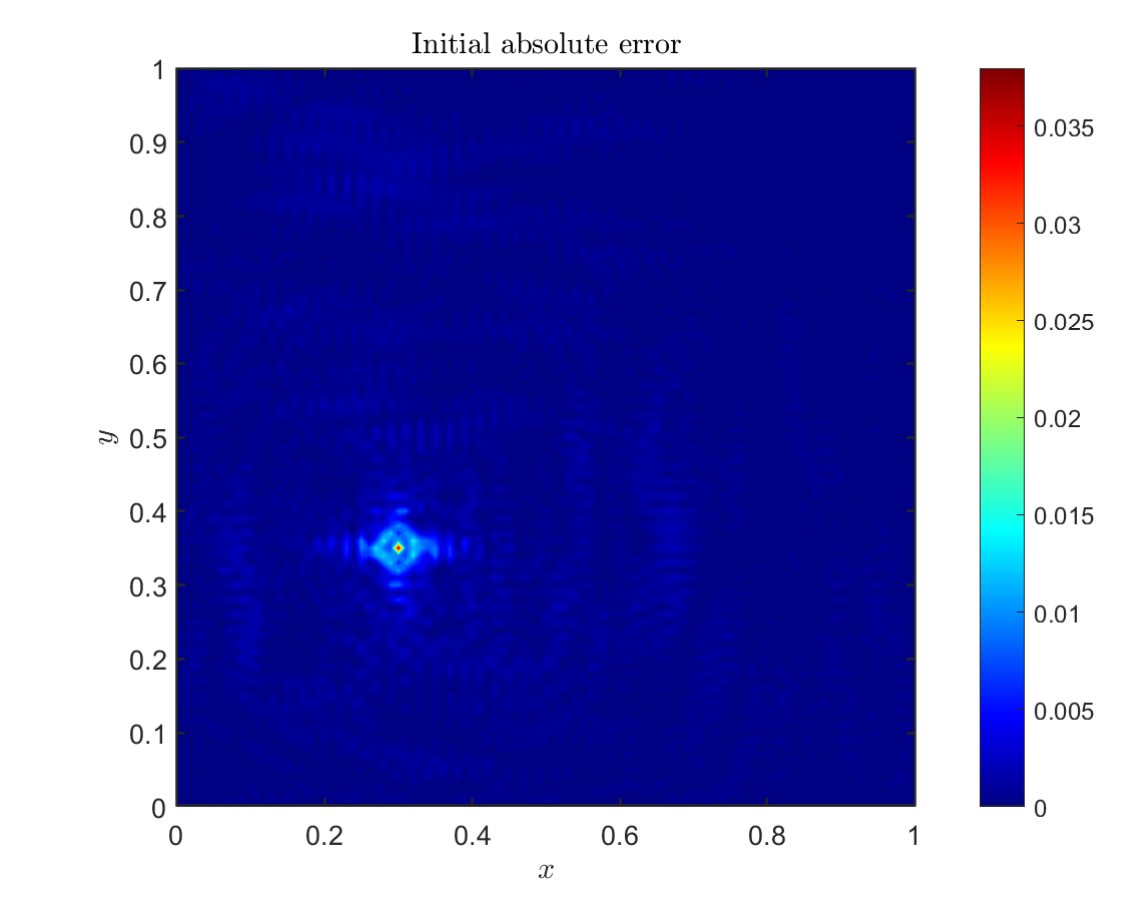}
    \includegraphics[width=0.32\textwidth]{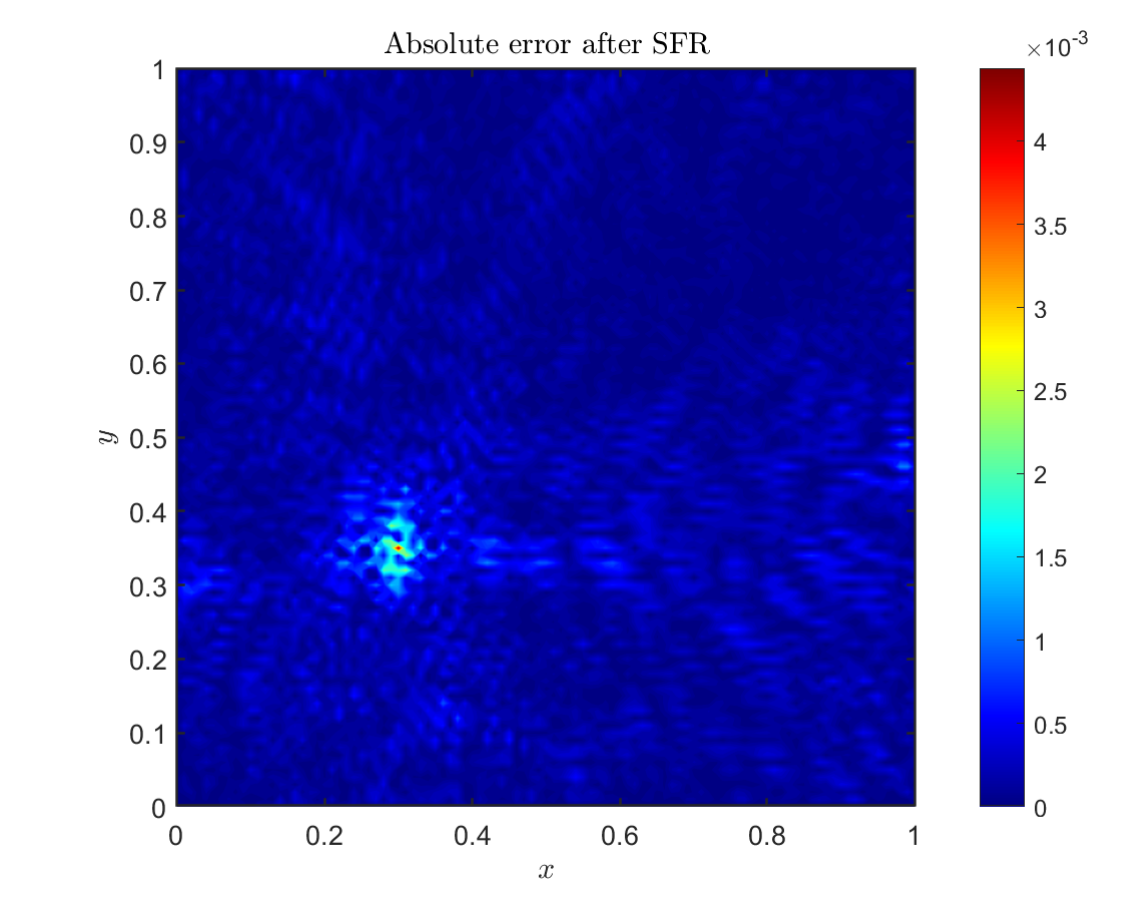}
    \includegraphics[width=0.32\textwidth]{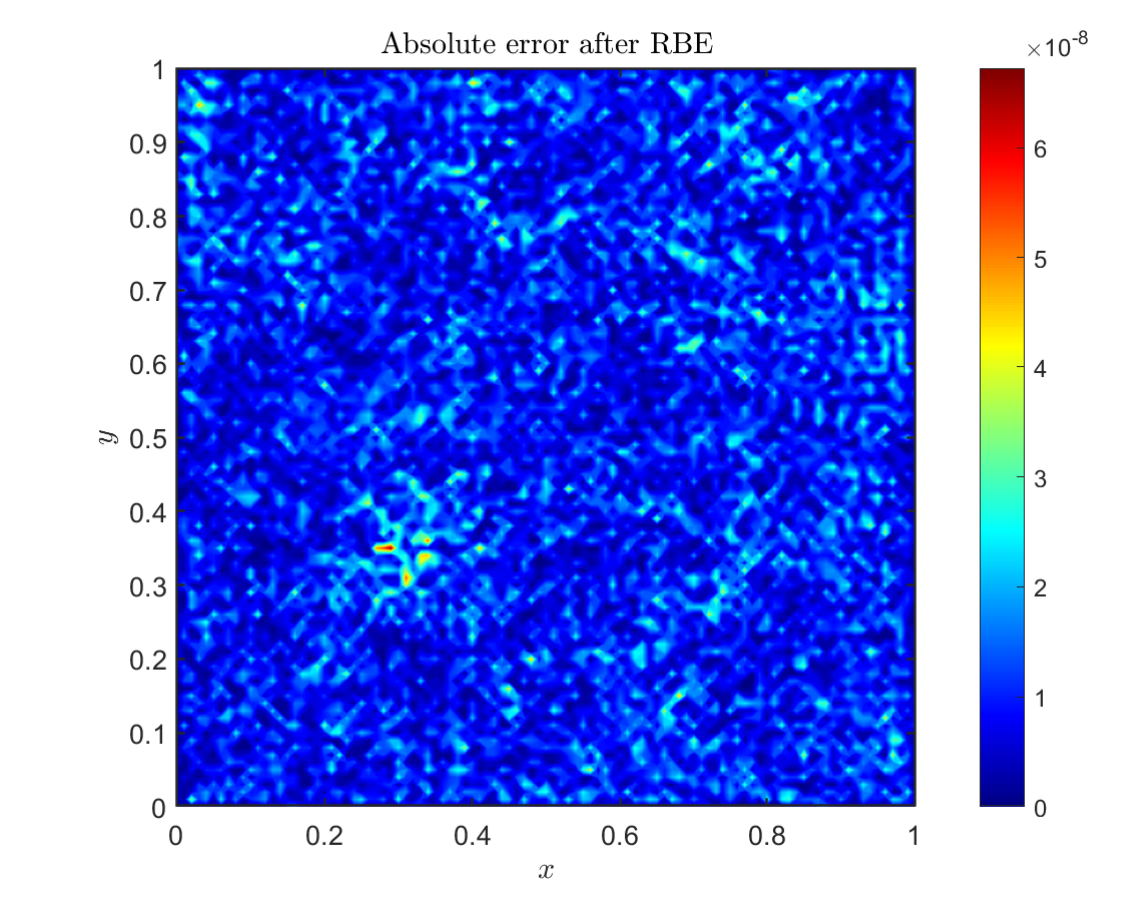}
    \caption{ Pointwise absolute error maps for the two-peak Poisson equation. From left to right: initial AD--RaNN, after SFR, and after RBE. }
    \label{fig:poisson_twopeak_error}
\end{figure}

As shown in Table~\ref{tab:poisson_twopeak}, the initial AD--RaNN state gives an average relative \(L^2\) error of \(2.068\times10^{-2}\) over MATLAB seeds \(0,\ldots,9\) (best \(1.829\times10^{-2}\), worst \(2.327\times10^{-2}\)). The SFR refresh reduces the average error to \(4.866\times10^{-3}\) (best \(3.609\times10^{-3}\), worst \(6.974\times10^{-3}\)) by reallocating the original randomized features between the detected structure regions and the background region. Starting from this SFR-refreshed approximation, RBE further decreases the average relative \(L^2\) error to \(1.164\times10^{-6}\) (best \(8.588\times10^{-8}\), worst \(7.096\times10^{-6}\)). Figures~\ref{fig:poisson_twopeak_solution_center} and~\ref{fig:poisson_twopeak_error} visualize the final prediction, the SFR feature-anchor distribution, and the progressive reduction of the pointwise error.

\begin{table}[!ht]
    \centering
    \caption{Comparison and mechanism ablation on the two-peak Poisson equation. The upper panel compares SFRBE--RaNN with representative external baselines. The lower panel isolates the effects of global initialization, fixed-budget feature redistribution, and residual-driven enrichment, with the wider SFR--RaNN row serving as a capacity control. Times and errors for the AD--RaNN, SFR--RaNN, RBE--RaNN (no SFR), and SFRBE--RaNN rows are means over seeds $0,\ldots,9$; the remaining rows are representative comparison or capacity-control results. The columns report absolute and relative $L^2$ errors, together with the corresponding errors measured in the full $H^1$ norm.}
    \label{tab:twopeak_main_comparison}
    \scriptsize
    \renewcommand{\arraystretch}{1.12}
    \setlength{\tabcolsep}{6pt}

    \resizebox{\linewidth}{!}{%
    \begin{tabular}{@{}lcccccc@{}}
        \toprule \multicolumn{7}{@{}l}{\textit{(a) Comparison with external baselines}}\\[-2pt]
        \midrule Method & DOFs / features & Time (s) & Abs. $L^2$ & Rel. $L^2$ & Abs. $H^1$ & Rel. $H^1$\\
        \midrule Expert-guided PINN & \(462593\) params. & \(5062.382\) & \(1.366\mathrm{e}{-02}\) & \(2.321\mathrm{e}{-01}\) & \(7.375\mathrm{e}{-02}\) & \(3.168\mathrm{e}{-02}\)\\
        HyResPINN & \(256711\) params. & \(6806.590\) & \(1.220\mathrm{e}{-03}\) & \(2.073\mathrm{e}{-02}\) & \(1.564\mathrm{e}{-02}\) & \(6.717\mathrm{e}{-03}\)\\
        Adaptive P1 FEM & \(7247310\) DOFs & \(279.380\) & \(2.029\mathrm{e}{-07}\) & \(3.447\mathrm{e}{-06}\) & \(2.670\mathrm{e}{-03}\) & \(1.147\mathrm{e}{-03}\)\\
        Adaptive P2 FEM & \(5724817\) DOFs & \(195.110\) & \(1.878\mathrm{e}{-08}\) & \(3.190\mathrm{e}{-07}\) & \(2.695\mathrm{e}{-05}\) & \(1.158\mathrm{e}{-05}\)\\
        AG--RaNN--1 & \(1200\) features & \(3.742\) & \(4.462\mathrm{e}{-04}\) & \(7.581\mathrm{e}{-03}\) & \(4.755\mathrm{e}{-02}\) & \(2.043\mathrm{e}{-02}\)\\
        AG--RaNN--2 & \(6450\) features & \(40.203\) & \(2.246\mathrm{e}{-04}\) & \(3.816\mathrm{e}{-03}\) & \(3.704\mathrm{e}{-02}\) & \(1.591\mathrm{e}{-02}\)\\
        Adaptive RFM & \(6450\) features & \(31.204\) & \(1.731\mathrm{e}{-04}\) & \(2.947\mathrm{e}{-03}\) & \(1.210\mathrm{e}{-02}\) & \(5.212\mathrm{e}{-03}\)\\
        KAPI--ELM & \(6450\) features & \(178.071\) & \(4.993\mathrm{e}{-06}\) & \(8.503\mathrm{e}{-05}\) & \(2.195\mathrm{e}{-04}\) & \(9.457\mathrm{e}{-05}\)\\
        \bottomrule
    \end{tabular}%
    }

    \vspace{5pt}

    \resizebox{\linewidth}{!}{%
    \begin{tabular}{@{}lcccccc@{}}
        \toprule \multicolumn{7}{@{}l}{\textit{(b) Mechanism ablation and capacity controls}}\\[-2pt]
        \midrule Method & DOFs / features & Time (s) & Abs. $L^2$ & Rel. $L^2$ & Abs. $H^1$ & Rel. $H^1$\\
        \midrule AD--RaNN (matched) & \(6450\) features & \(14.489\) & \(3.472\mathrm{e}{-04}\) & \(5.897\mathrm{e}{-03}\) & \(7.354\mathrm{e}{-02}\) & \(3.160\mathrm{e}{-02}\)\\
        SFR--RaNN (ours) & \(6450\) features & \(81.414\) & \(2.676\mathrm{e}{-05}\) & \(4.545\mathrm{e}{-04}\) & \(5.719\mathrm{e}{-03}\) & \(2.458\mathrm{e}{-03}\)\\
        SFR--RaNN (ours, wider) & \(12000\) features & \(434.678\) & \(3.530\mathrm{e}{-07}\) & \(5.997\mathrm{e}{-06}\) & \(1.057\mathrm{e}{-04}\) & \(4.541\mathrm{e}{-05}\)\\
        RBE--RaNN (no SFR) & \(6630\) features (avg.) & \(116.075\) & \(4.849\mathrm{e}{-07}\) & \(8.236\mathrm{e}{-06}\) & \(9.213\mathrm{e}{-05}\) & \(3.959\mathrm{e}{-05}\)\\
        SFRBE--RaNN (ours) & \(6450\) features (avg.) & \(126.282\) & \(6.854\mathrm{e}{-08}\) & \(1.164\mathrm{e}{-06}\) & \(1.613\mathrm{e}{-05}\) & \(6.933\mathrm{e}{-06}\)\\
        \bottomrule
    \end{tabular}%
    }

\end{table}

\begin{figure}[!ht]
    \centering
    \includegraphics[width=0.24\textwidth]{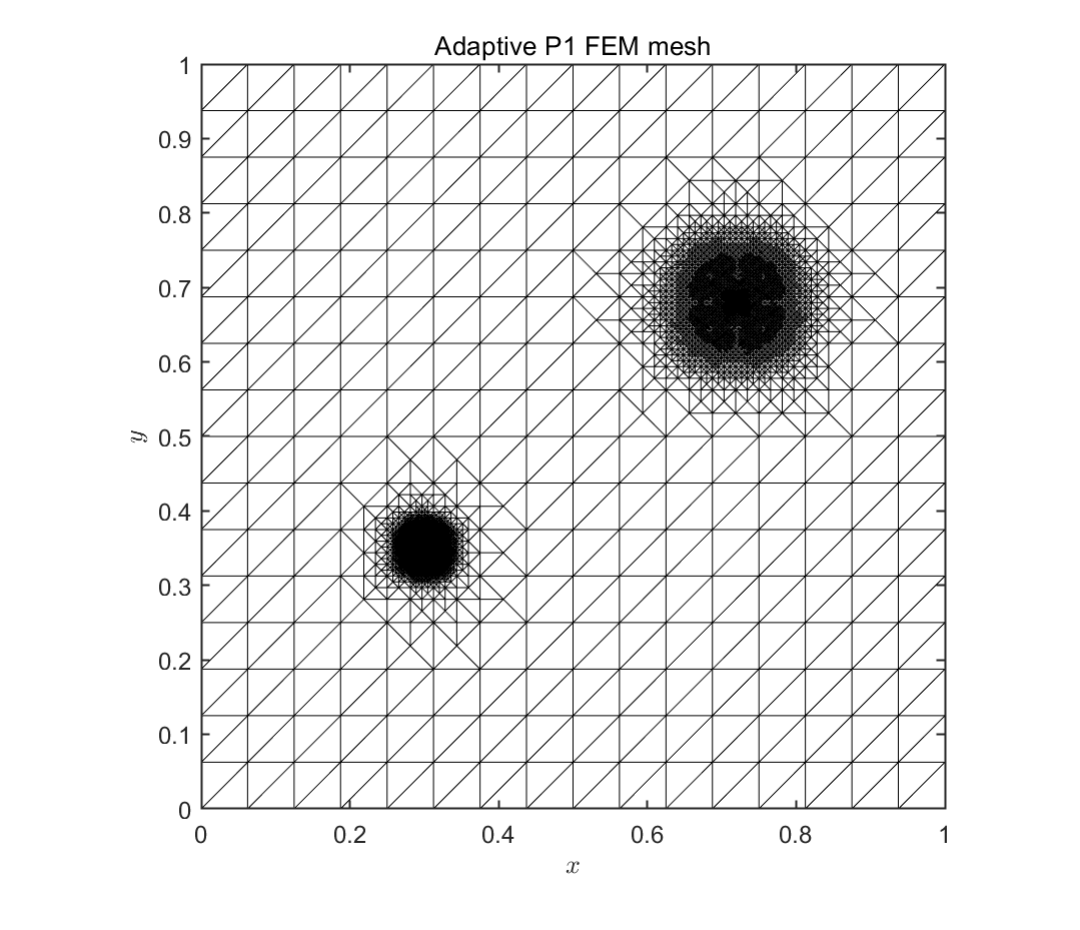}
    \includegraphics[width=0.24\textwidth]{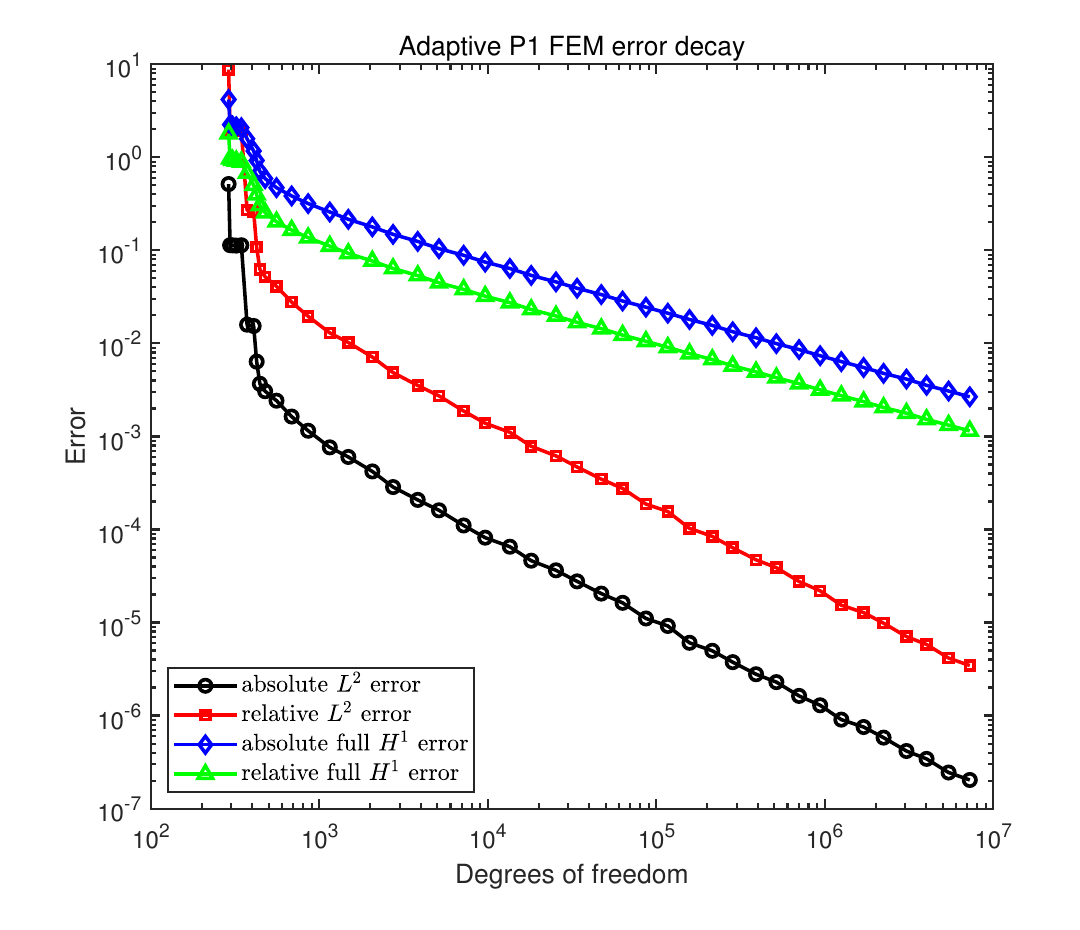}
    \includegraphics[width=0.24\textwidth]{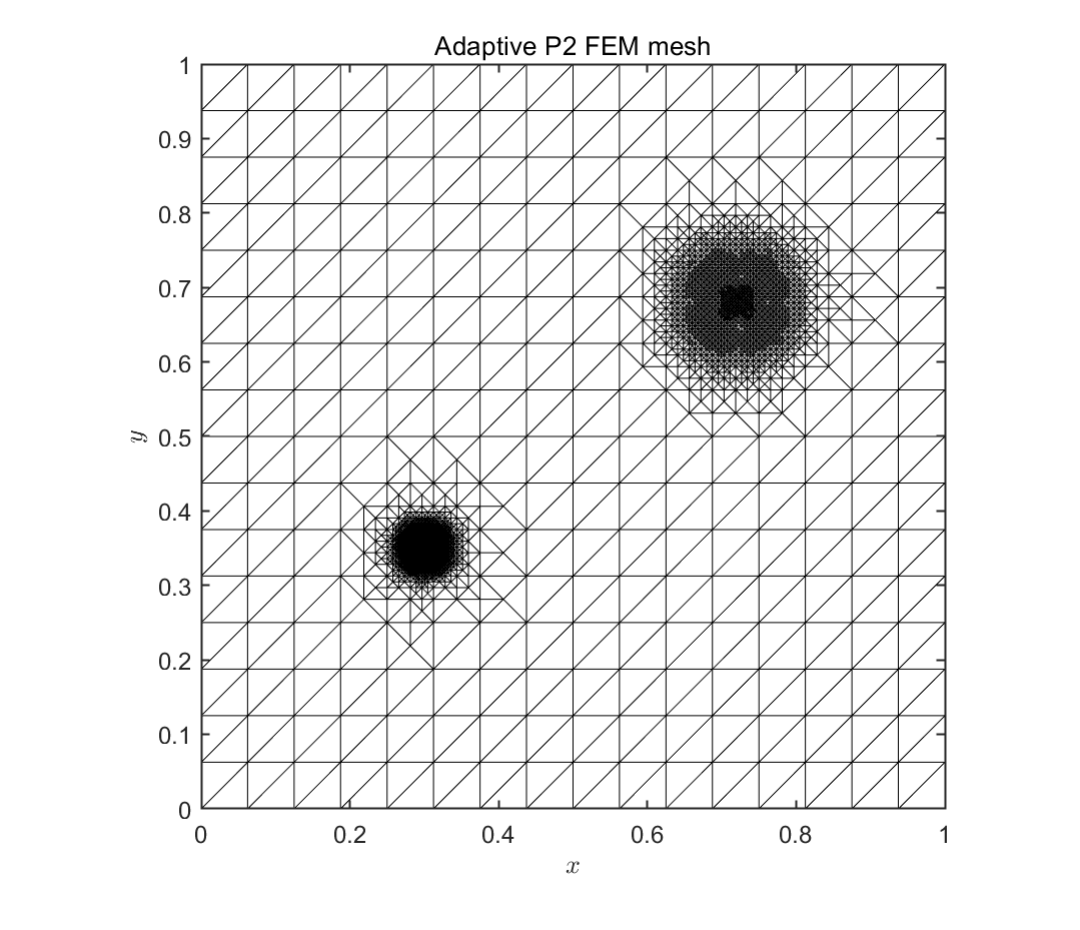}
    \includegraphics[width=0.24\textwidth]{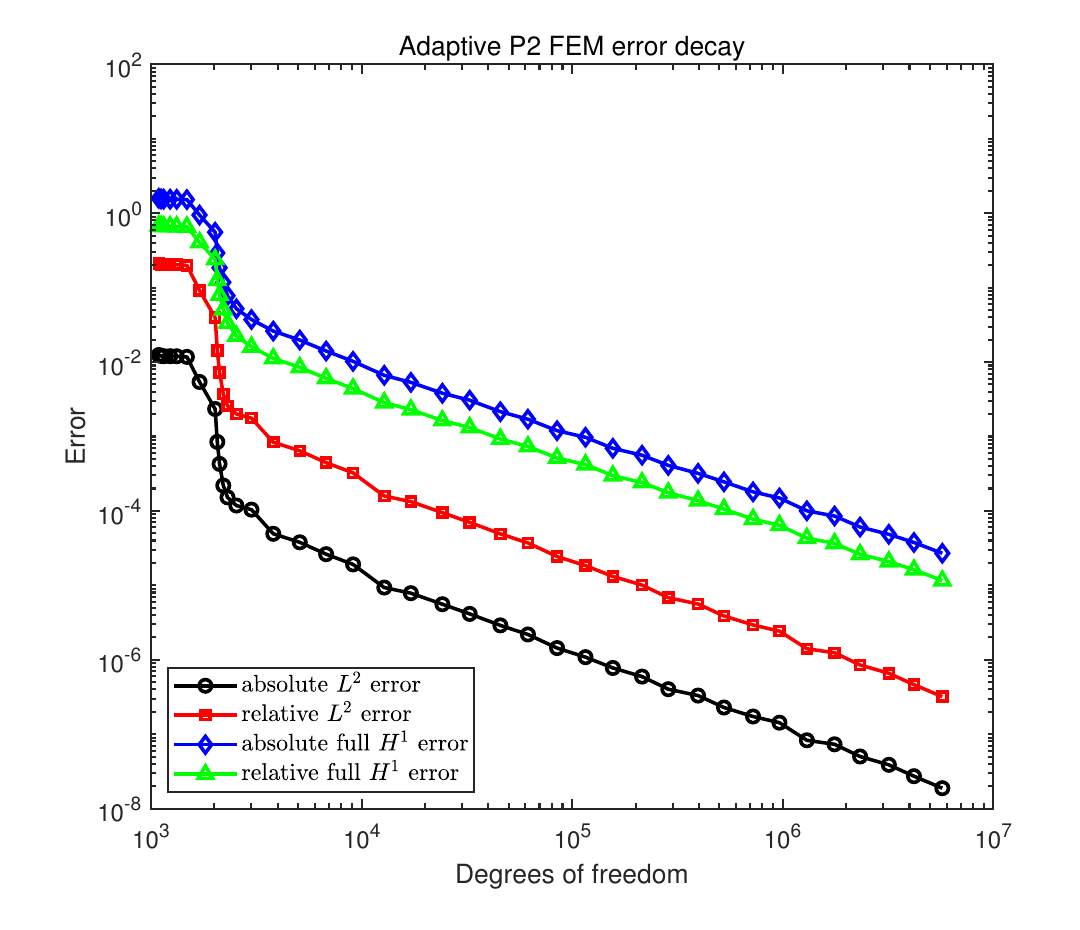}
    \caption{ Adaptive finite element results for the two-peak Poisson equation~\eqref{eq:twopeak_poisson}. From left to right: adaptive P1 mesh, absolute and relative \(L^2\) and full \(H^1\) error decay for adaptive P1 FEM, adaptive P2 mesh, and the corresponding error decay for adaptive P2 FEM. Both meshes are shown at the 20th AFEM iteration. }
    \label{fig:twopeak_afem}
\end{figure}

The reported wall-clock times are implementation-dependent and are included only as practical reference values. Table~\ref{tab:twopeak_main_comparison} compares SFRBE--RaNN with Expert-guided PINN~\cite{wang2023expert}, HyResPINN~\cite{cooley2025hyrespinns}, adaptive finite-element baselines, AG--RaNN~\cite{dang2024adaptive}, Adaptive RFM~\cite{chen2022bridging}, and KAPI--ELM~\cite{dwivedi2025kernel}, together with feature-count-matched AD--RaNN, SFR--RaNN and RBE--RaNN ablations.

The adaptive finite-element results use the iFEM MATLAB package with the standard solve--estimate--mark--refine loop and multigrid-preconditioned conjugate gradients~\cite{chen2008ifem,dorfler1996convergent,verfurth2013posteriori}. The meshes in Figure~\ref{fig:twopeak_afem} concentrate around the two Gaussian peaks. Adaptive P2 FEM obtains \(3.190\times10^{-7}\) in relative \(L^2\) and \(1.158\times10^{-5}\) in full \(H^1\), using \(5.725\times10^{6}\) degrees of freedom.

SFRBE--RaNN obtains an average relative \(L^2\) error of \(1.164\times10^{-6}\) (best \(8.588\times10^{-8}\), worst \(7.096\times10^{-6}\) over seeds \(0\)--\(9\)) and an average full \(H^1\) error of \(6.933\times10^{-6}\), using an average final total of \(6450\) randomized features and \(126.282\) seconds of compute time, excluding figure generation. Feature counts and finite-element degrees of freedom are not directly equivalent; this comparison highlights the mesh-free representation and direct evaluation of SFRBE--RaNN rather than a claim of uniformly lower error.

At a matched final feature count, AD--RaNN, SFR--RaNN, and SFRBE--RaNN achieve mean relative \(L^2\) errors of ($5.897\times10^{-3}$), ($4.545\times10^{-4}$), and ($1.164\times10^{-6}$), respectively. These results compare the three feature-space configurations at comparable final widths. As a complementary ablation, RBE--RaNN (no SFR) starts from the same 3500-feature AD--RaNN space, accepts nine RBE blocks, and reaches a ten-seed mean of 6630 features with relative \(L^2\) error \(8.236\times10^{-6}\). Thus, RBE alone improves the initial trial space, while applying SFR first yields a further gain; the wider SFR--RaNN result is reported separately as a capacity control.

\paragraph{Poisson residual-to-error estimate}
\label{para:poisson_residual_bound}
For the two-peak Poisson problem \eqref{eq:twopeak_poisson}, define \(R(v)=-\Delta v-f\) and \(b(v)=v|_{\partial\Omega}-g\), with \(\|\cdot\|_\Omega=\|\cdot\|_{L^2(\Omega)}\) and \(\|\cdot\|_{\partial\Omega}=\|\cdot\|_{L^2(\partial\Omega)}\). For \(u^*,v\in H^2(\Omega)\), the following explicit stability estimate holds:
\begin{equation}\label{eq:poisson_l2_bound}
    \|v-u^*\|_{L^2(\Omega)} \le \frac{\|R(v)\|_{L^2(\Omega)}}{2\pi^2} + \frac{\|b(v)\|_{L^2(\partial\Omega)}}{\sqrt{2}}.
\end{equation}
The proof is given in Appendix~\ref{app:poisson_residual_bound}.

For the seed-4 two-peak Poisson run, let \(\rho_L=\|R(u_L)\|_{L^2(\Omega)}\), \(\beta_L=\|b(u_L)\|_{L^2(\partial\Omega)}\), and \(e_L=\|u_L-u^*\|_{L^2(\Omega)}\) denote the interior residual norm, boundary residual norm, and absolute \(L^2(\Omega)\) solution error, respectively. Define \(B_L=\rho_L/(2\pi^2)+\beta_L/\sqrt{2}\). For the accepted block from stage \(L\) to stage \(L+1\), let \(\varrho_L=\|\bm r_L\|_2\) be the weighted discrete residual norm. Then \(q_L^{\mathrm{obs}}=\varrho_{L+1}/\varrho_L\) and \(q_L^{\mathrm{pred}}=\sqrt{1-s_L^{(0)}}\). The observed and predicted ratios are \((0.332022,0.332024)\) for RBE 2 and \((0.638280,0.638279)\) for RBE 8. The explicit Poisson bound is conservative, with \(B_L/e_L\) remaining on the order of \(10^3\). Complete stagewise diagnostics are reported in Table~\ref{tab:app_poisson_diagnostics}.

\begin{table}[!ht]
    \centering
    \scriptsize
    \setlength{\tabcolsep}{2pt}
    \renewcommand{\arraystretch}{1.05}
    \caption{Residual-to-error and RBE projection diagnostics for the detailed seed-4 Poisson run.}
    \label{tab:app_poisson_diagnostics}
    \resizebox{\linewidth}{!}{%
    \begin{tabular}{@{}lccccccccc@{}}
        \toprule & \multicolumn{5}{c}{Poisson residual-to-error diagnostics} & \multicolumn{4}{c}{RBE projection diagnostics} \\
        \cmidrule(lr){2-6}\cmidrule(lr){7-10} Stage & $\rho_L$ & $\beta_L$ & $e_L$ & $B_L$ & $B_L/e_L$ & $\varrho_L$ & $s_L^{(0)}$ & $q_L^{\rm obs}$ & $q_L^{\rm pred}$ \\
        \midrule Initial & $3.7801\mathrm{e}{+01}$ & $2.0260\mathrm{e}{-04}$ & $1.0768\mathrm{e}{-03}$ & $1.9152\mathrm{e}{+00}$ & $1.78\mathrm{e}{+03}$ & -- & -- & -- & -- \\
        SFR & $1.0008\mathrm{e}{+01}$ & $1.1999\mathrm{e}{-04}$ & $2.4459\mathrm{e}{-04}$ & $5.0710\mathrm{e}{-01}$ & $2.07\mathrm{e}{+03}$ & $5.5174\mathrm{e}{-02}$ & -- & -- & -- \\
        RBE 1 & $1.1027\mathrm{e}{-01}$ & $9.0593\mathrm{e}{-07}$ & $2.2659\mathrm{e}{-06}$ & $5.5870\mathrm{e}{-03}$ & $2.47\mathrm{e}{+03}$ & $6.0005\mathrm{e}{-04}$ & 0.99988 & 0.010876 & 0.010954 \\
        RBE 2 & $3.6376\mathrm{e}{-02}$ & $3.7292\mathrm{e}{-07}$ & $1.2297\mathrm{e}{-06}$ & $1.8431\mathrm{e}{-03}$ & $1.50\mathrm{e}{+03}$ & $1.9923\mathrm{e}{-04}$ & 0.88976 & 0.332022 & 0.332024 \\
        RBE 3 & $1.7380\mathrm{e}{-02}$ & $1.4947\mathrm{e}{-07}$ & $4.1892\mathrm{e}{-07}$ & $8.8059\mathrm{e}{-04}$ & $2.10\mathrm{e}{+03}$ & $9.4681\mathrm{e}{-05}$ & 0.77416 & 0.475235 & 0.475226 \\
        RBE 4 & $9.7973\mathrm{e}{-03}$ & $9.4211\mathrm{e}{-08}$ & $1.9670\mathrm{e}{-07}$ & $4.9640\mathrm{e}{-04}$ & $2.52\mathrm{e}{+03}$ & $5.3544\mathrm{e}{-05}$ & 0.68019 & 0.565520 & 0.565517 \\
        RBE 5 & $5.7025\mathrm{e}{-03}$ & $3.9802\mathrm{e}{-08}$ & $9.1348\mathrm{e}{-08}$ & $2.8892\mathrm{e}{-04}$ & $3.16\mathrm{e}{+03}$ & $3.0929\mathrm{e}{-05}$ & 0.66633 & 0.577637 & 0.577642 \\
        RBE 6 & $3.3086\mathrm{e}{-03}$ & $2.4162\mathrm{e}{-08}$ & $5.4388\mathrm{e}{-08}$ & $1.6763\mathrm{e}{-04}$ & $3.08\mathrm{e}{+03}$ & $1.7960\mathrm{e}{-05}$ & 0.66282 & 0.580685 & 0.580672 \\
        RBE 7 & $1.4387\mathrm{e}{-03}$ & $1.1447\mathrm{e}{-08}$ & $2.1626\mathrm{e}{-08}$ & $7.2893\mathrm{e}{-05}$ & $3.37\mathrm{e}{+03}$ & $7.8229\mathrm{e}{-06}$ & 0.81027 & 0.435573 & 0.435580 \\
        RBE 8 & $9.2044\mathrm{e}{-04}$ & $6.4937\mathrm{e}{-09}$ & $1.2849\mathrm{e}{-08}$ & $4.6635\mathrm{e}{-05}$ & $3.63\mathrm{e}{+03}$ & $4.9932\mathrm{e}{-06}$ & 0.59260 & 0.638280 & 0.638279 \\
        \bottomrule
    \end{tabular}%
    }
\end{table}

\subsection{Multiscale and variable-coefficient elliptic problems}

\paragraph{Case 1: Multiscale Helmholtz equation}

We consider the Helmholtz equation
\begin{equation}\label{eq:helmholtz}
    -u_{xx}-u_{yy}-u=f, \qquad (x,y)\in(0,1)^2,
\end{equation}
with Dirichlet boundary data prescribed from the exact solution
\begin{equation}\label{eq:helmholtz_exact}
    \begin{aligned}
        u(x,y) &= \sin(3\pi x)\sin(2\pi y) + 0.5\sin(25\pi x)\sin(22\pi y)\\ &\quad + 0.2\sin(50\pi x)\sin(57\pi y).
    \end{aligned}
\end{equation}
This example contains several separated spatial frequencies and therefore tests the ability of the proposed enrichment strategy to resolve strongly oscillatory multiscale residuals. Unlike the two-peak Poisson example, the dominant residual is not concentrated in a small number of localized regions. Consequently, the SFR gate is not activated, and the improvement is mainly achieved by the subsequent RBE block enrichment.

\begin{figure}[!ht]
    \centering
    \includegraphics[width=0.24\textwidth]{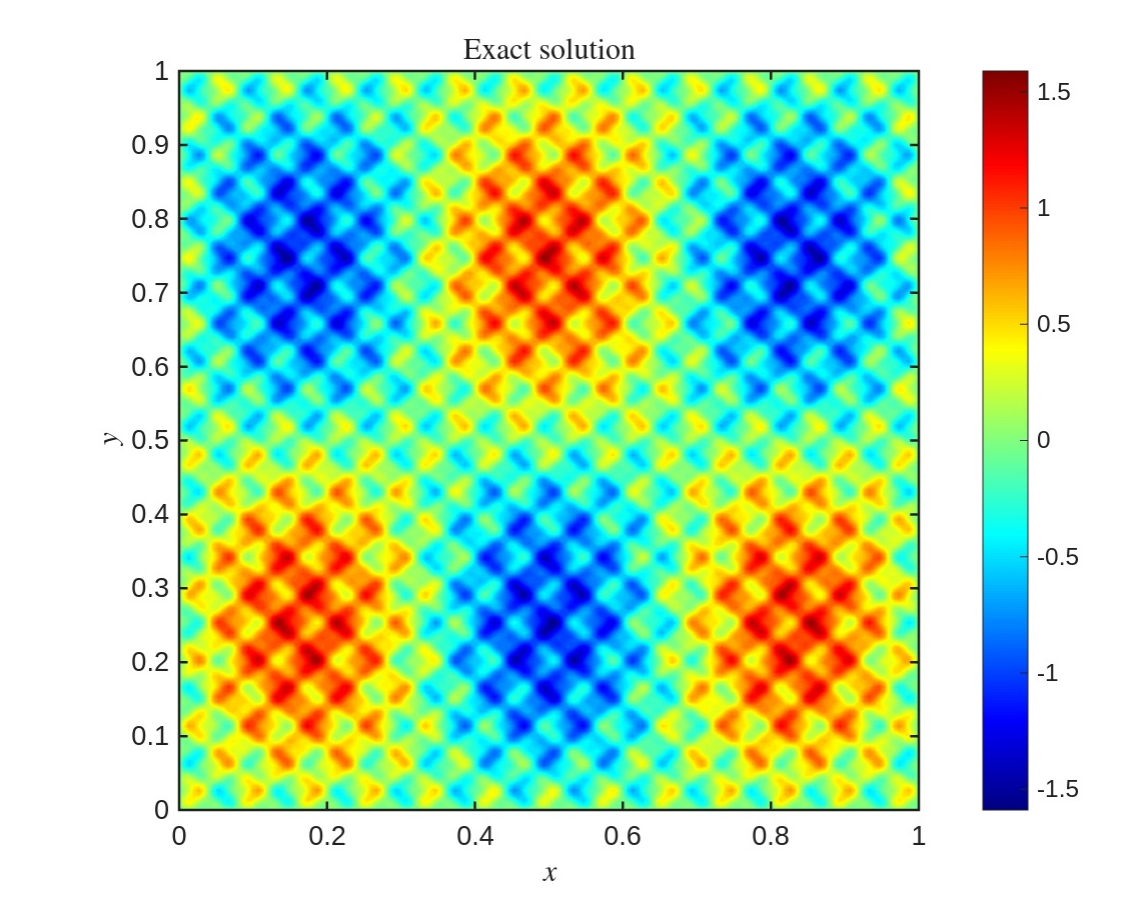}
    \hfill
    \includegraphics[width=0.24\textwidth]{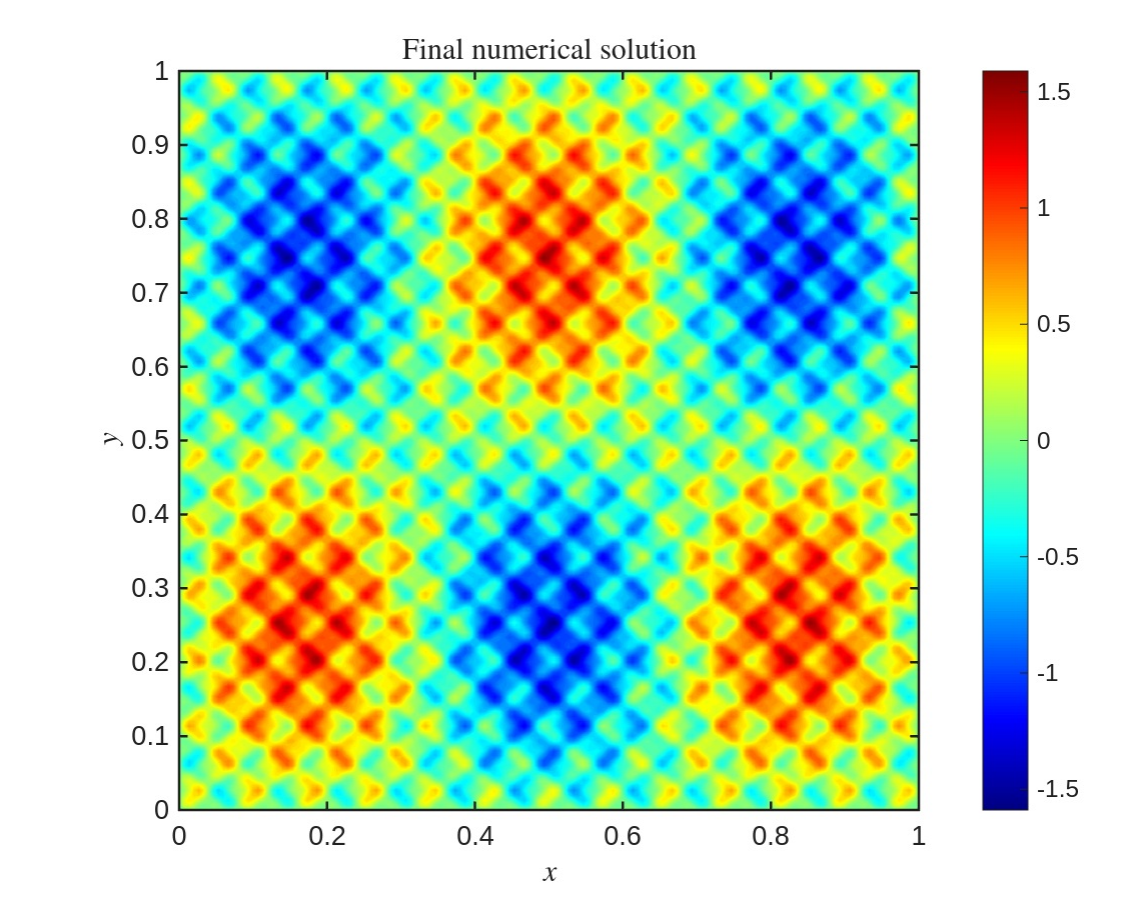}
    \hfill
    \includegraphics[width=0.24\textwidth]{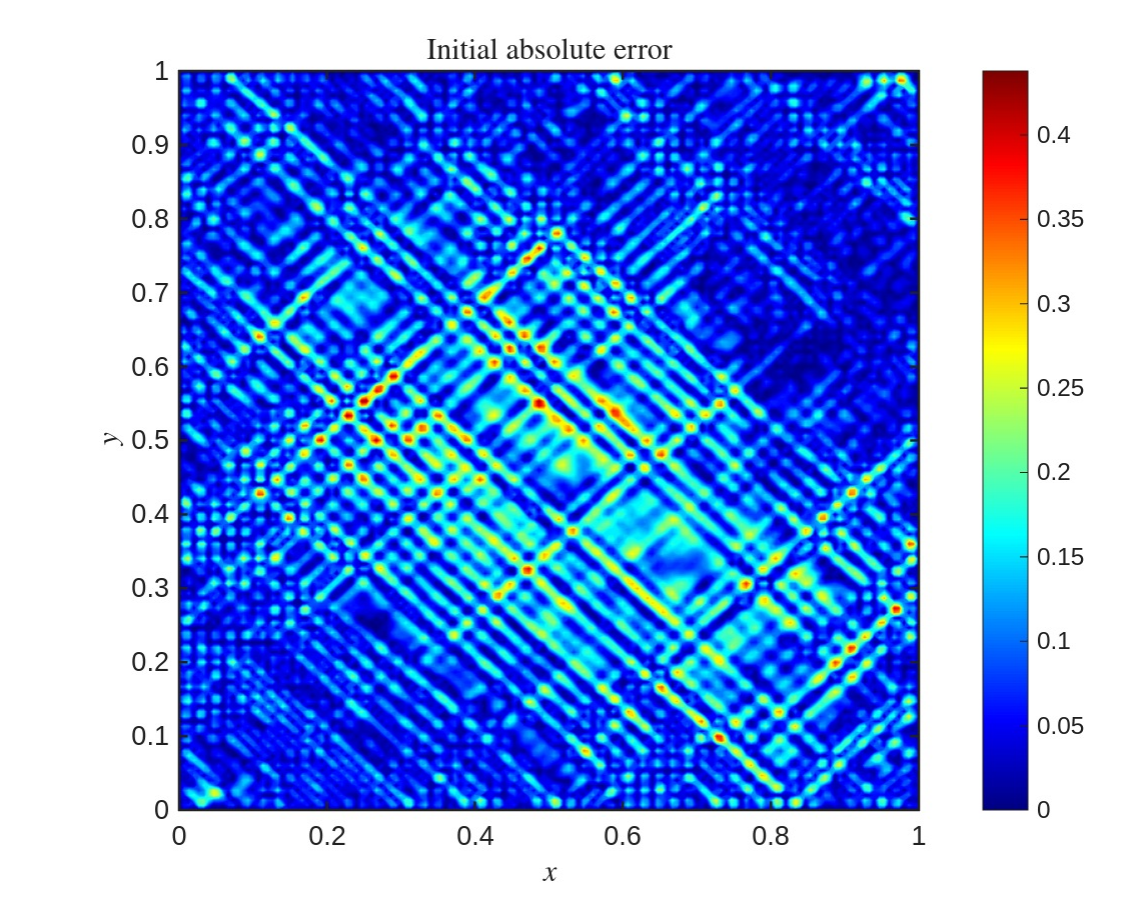}
    \hfill
    \includegraphics[width=0.24\textwidth]{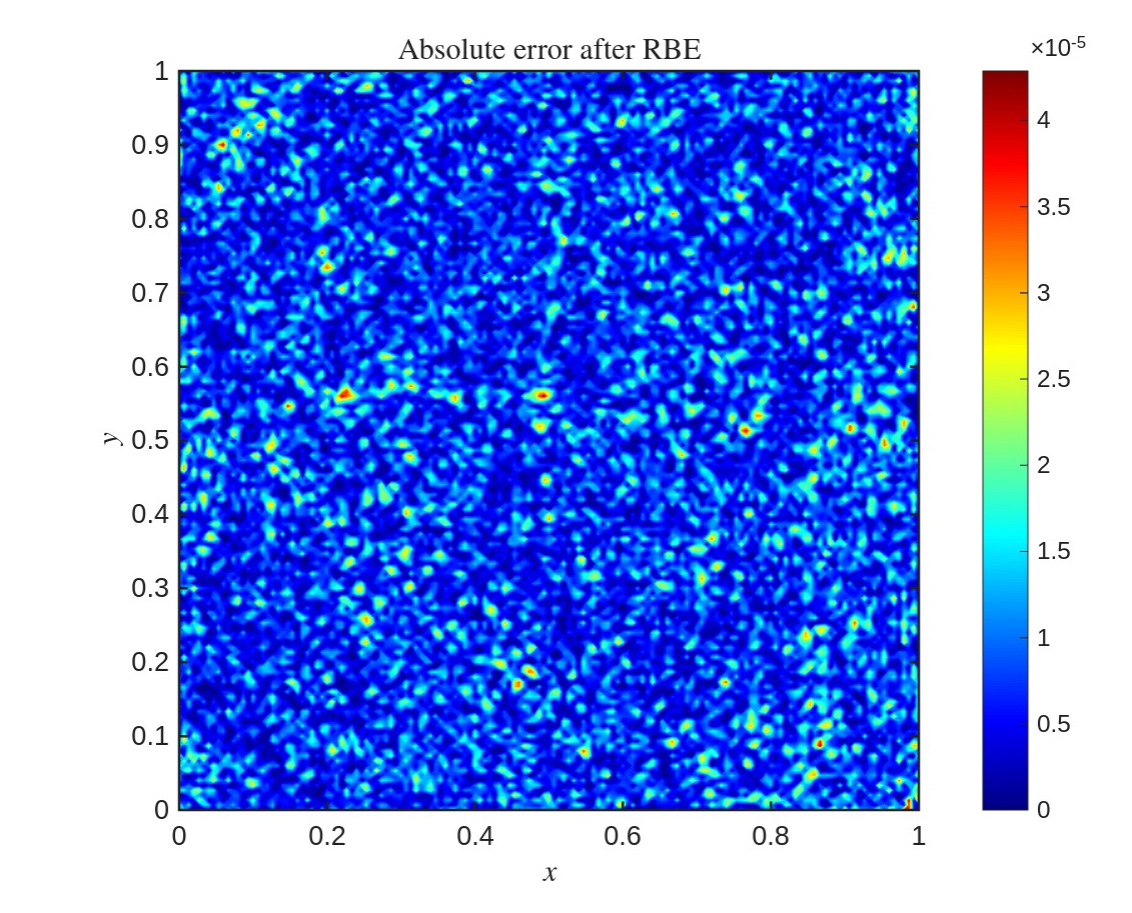}

    \caption{ Numerical results for the multiscale Helmholtz equation~\eqref{eq:helmholtz}. From left to right: exact solution, final numerical prediction after RBE enrichment, initial pointwise absolute error, and pointwise absolute error after RBE enrichment. }
    \label{fig:helmholtz_solution_error}
\end{figure}

The SFR gate remains inactive because the residual is globally oscillatory rather than localized. Starting from the 3500-feature AD--RaNN state \(\mathrm{Rel.}\,L^2=2.069\times10^{-1}\), RBE accepts ten blocks drawn from the families \texttt{sin}, \texttt{tanh}, \texttt{ridge\_gauss}, and \texttt{mexican\_hat}, reducing the relative \(L^2\) error to \(1.647\times10^{-5}\). The corresponding reduction in the pointwise error is shown in Figure~\ref{fig:helmholtz_solution_error}; the per-block scales are listed in Appendix~\ref{app:rbe_logs}.

\begin{table}[!ht]
    \centering
    \caption{Stage-wise results for the multiscale Helmholtz equation~\eqref{eq:helmholtz}. The SFR row records the inactive gate and unchanged 3500-feature allocation. ``Total features'' is cumulative, and the reported times are stage wall-clock values.}
    \label{tab:helmholtz}
    \small
    \setlength{\tabcolsep}{3.5pt}
    \renewcommand{\arraystretch}{1.08}
    \resizebox{\linewidth}{!}{%
    \begin{tabular}{ccccccc}
        \toprule Stage & Family & Features & Total features & Scale $\bm r$ & Time (s) & Rel. $L^2$\\
        \midrule Initial AD--RaNN & \texttt{tanh} & 3500 & 3500 & [63.667,66.557] & 15.303 & $2.069\mathrm{e}{-01}$\\
        \midrule SFR & Not triggered & -- & 3500 & -- & 10.247 & $2.069\mathrm{e}{-01}$\\
        \midrule RBE & \texttt{sin}/\texttt{tanh}/\texttt{ridge\_gauss}/\texttt{mexican\_hat} & 5600 (10 blocks) & 9100 & {--} & 501.654 & $1.647\mathrm{e}{-05}$\\
        \bottomrule
    \end{tabular}}
\end{table}

\paragraph{Case 2: Multiscale Darcy equation}
We next consider the linear variable-coefficient Darcy problem
\begin{equation}\label{eq:darcy_phi}
    \begin{gathered}
        \begin{aligned}
            -\nabla\!\cdot\!\left(K_\sigma(x,y)\nabla u\right)&=1, && (x,y)\in(0,1)^2,\\ u(0,y)=1,u(1,y)&=0, &&0\le y\le1,\\ K_\sigma\nabla u\cdot\boldsymbol n&=0, &&y\in\{0,1\},
        \end{aligned}
        \\[-2pt]
        \begin{aligned}
            K_\sigma(x,y)&=\exp\!\Bigl(5\sin(4\pi x)\sin(4\pi y) +2.5\sin(10\pi x)\sin(6\pi y)\\[-2pt] &\qquad +7\exp\!\left[-\frac{(x-0.7)^2+(y-0.3)^2}{\sigma^2}\right]\Bigr), \qquad \sigma=0.03.
        \end{aligned}
    \end{gathered}
\end{equation}
This is a linear problem with two oscillatory coefficient scales and a localized, smooth, high-contrast feature. On the evaluation grid, $K_\sigma$ ranges from approximately $9.5\times10^{-4}$ to $1.1\times10^{3}$, so the coefficient contrast is about $1.1\times10^{6}$. The resulting solution gradient and flux are strongly nonuniform even though the coefficient remains smooth; this makes the case a compact test of regional redistribution and residual-driven enrichment without introducing an interface condition. The reference field is computed by quadratic finite elements with $H_{\max}=10^{-3}$ and stored on a $101\times101$ evaluation grid.

\begin{table}[!ht]
    \centering
    \caption{Stage-wise results for the Multiscale Darcy problem \eqref{eq:darcy_phi}. The two SFR rows report the structure- and background-region neuron counts $m_S$ and $m_B$; ``Total'' is the cumulative feature count. Times are stage wall-clock times, and the detailed RBE blocks are listed in Appendix~\ref{app:rbe_logs}. The run uses seed $4$ and times exclude figure generation.}
    \label{tab:darcy_stage}
    \small
    \setlength{\tabcolsep}{3.2pt}
    \renewcommand{\arraystretch}{1.08}
    \resizebox{\linewidth}{!}{%
    \begin{tabular}{ccccccc}
        \toprule Stage & Family & Features & Total & Scale $\boldsymbol r$ & Time (s) & Rel. $L^2$\\
        \midrule Initial AD--RaNN & \texttt{tanh} & 3000 & 3000 & [18.530,\,18.510] & 63.924 & $1.239\mathrm{e}{-01}$\\
        \midrule \multirow{2}{*}{SFR} & \texttt{tanh} & $m_S=1482$ & \multirow{2}{*}{3000} & \begin{tabular}{c}$\mathcal S_1:[34.875,\,35.573]$\\[-2pt]$\mathcal S_2:[35.572,\,36.671]$\end{tabular} & \multirow{2}{*}{24.560} & \multirow{2}{*}{$8.188\mathrm{e}{-02}$}\\[-1pt]
        & \texttt{tanh} & $m_B=1518$ & & [29.131,\,22.165] & & \\
        \midrule RBE & \texttt{mexican\_hat}/\texttt{ridge\_gauss}/\texttt{tanh} & 4900 (7 blocks) & 7900 & {--} & 141.411 & $8.362\mathrm{e}{-03}$\\
        \bottomrule
    \end{tabular}}
\end{table}
The initial AD--RaNN approximation has relative $L^2$ error $1.239\times10^{-1}$. SFR accepts the regional refresh without increasing the 3000-feature budget and reduces this error to $8.188\times10^{-2}$; seven accepted RBE blocks then reduce it to $8.362\times10^{-3}$. Figure~\ref{fig:darcy_phi_summary} shows the six saved diagnostics: the reference solution, final prediction, and SFR-anchor distribution, followed by the initial error, post-SFR error, and RBE error.
\begin{figure}[!ht]
    \centering
    \begin{tabular}{ccc}
        \includegraphics[width=0.30\textwidth]{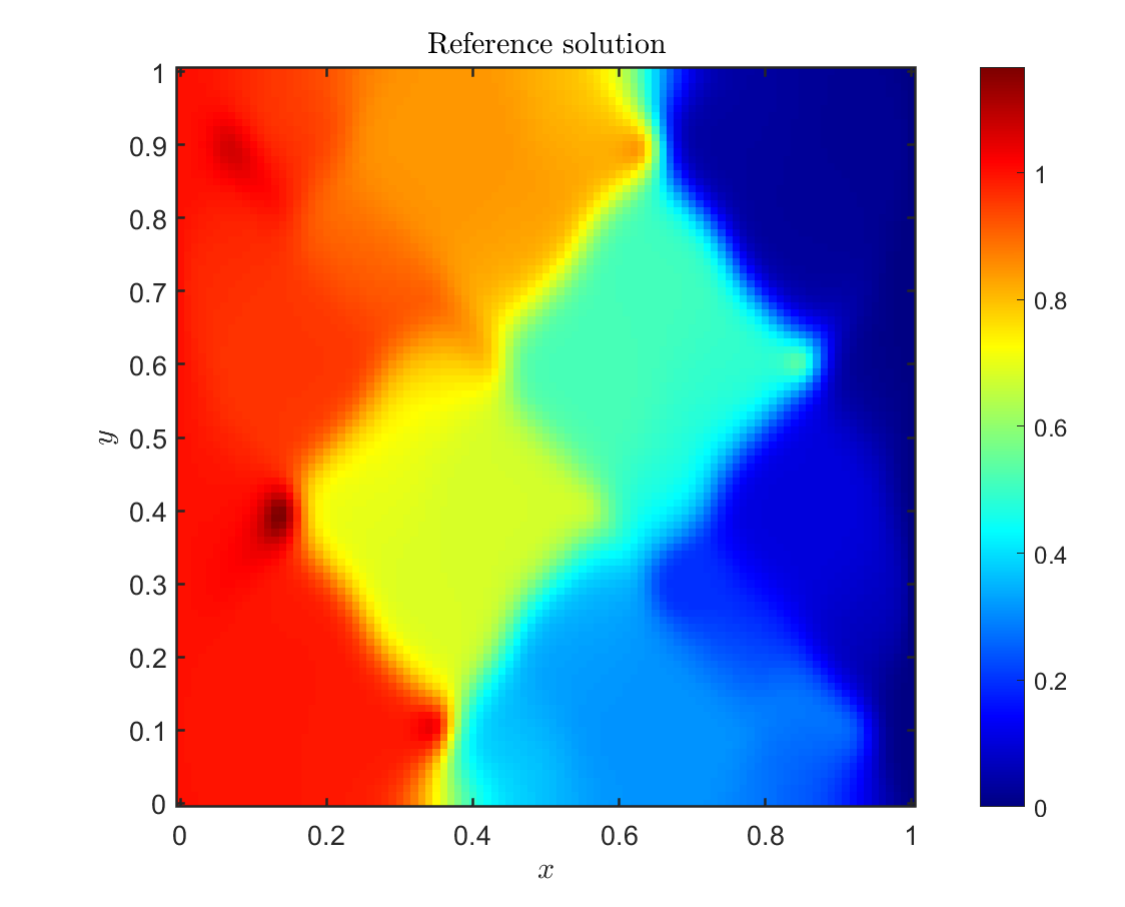} & \includegraphics[width=0.30\textwidth]{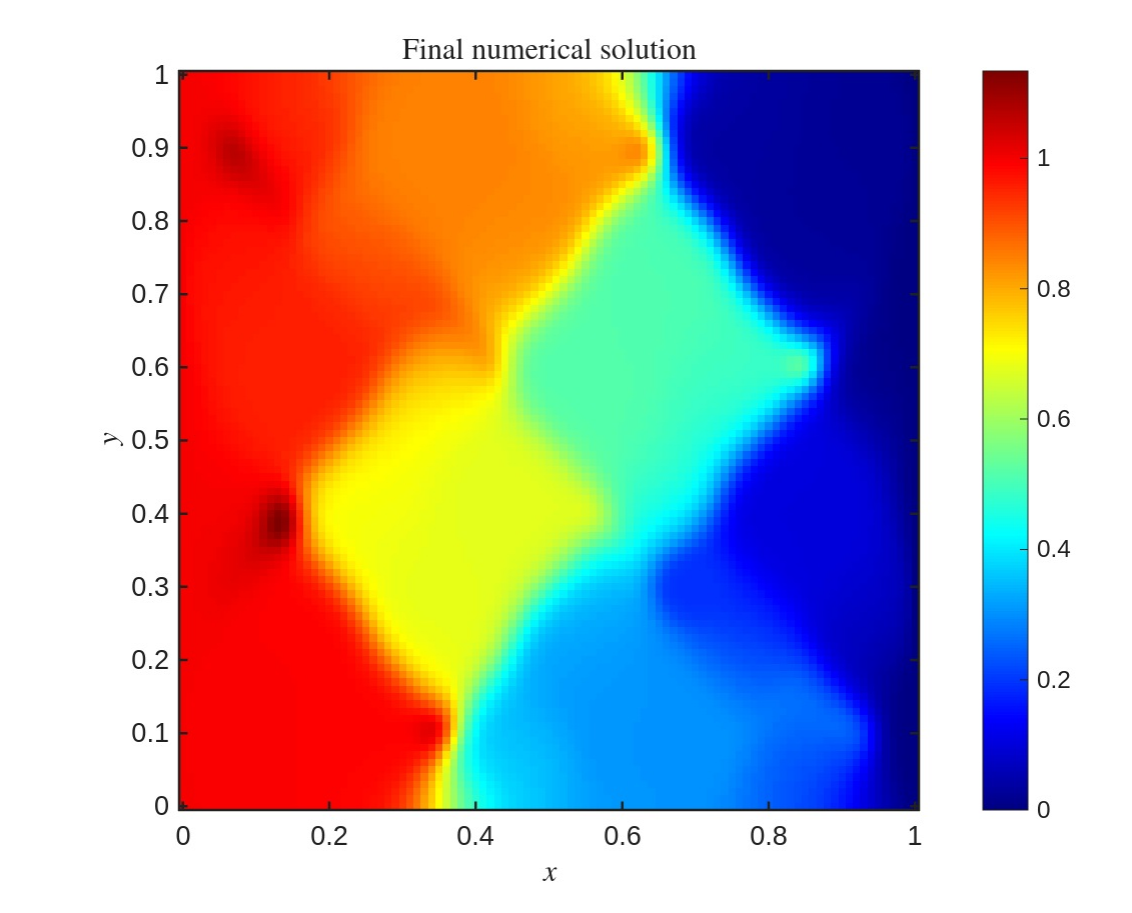} & \includegraphics[width=0.30\textwidth]{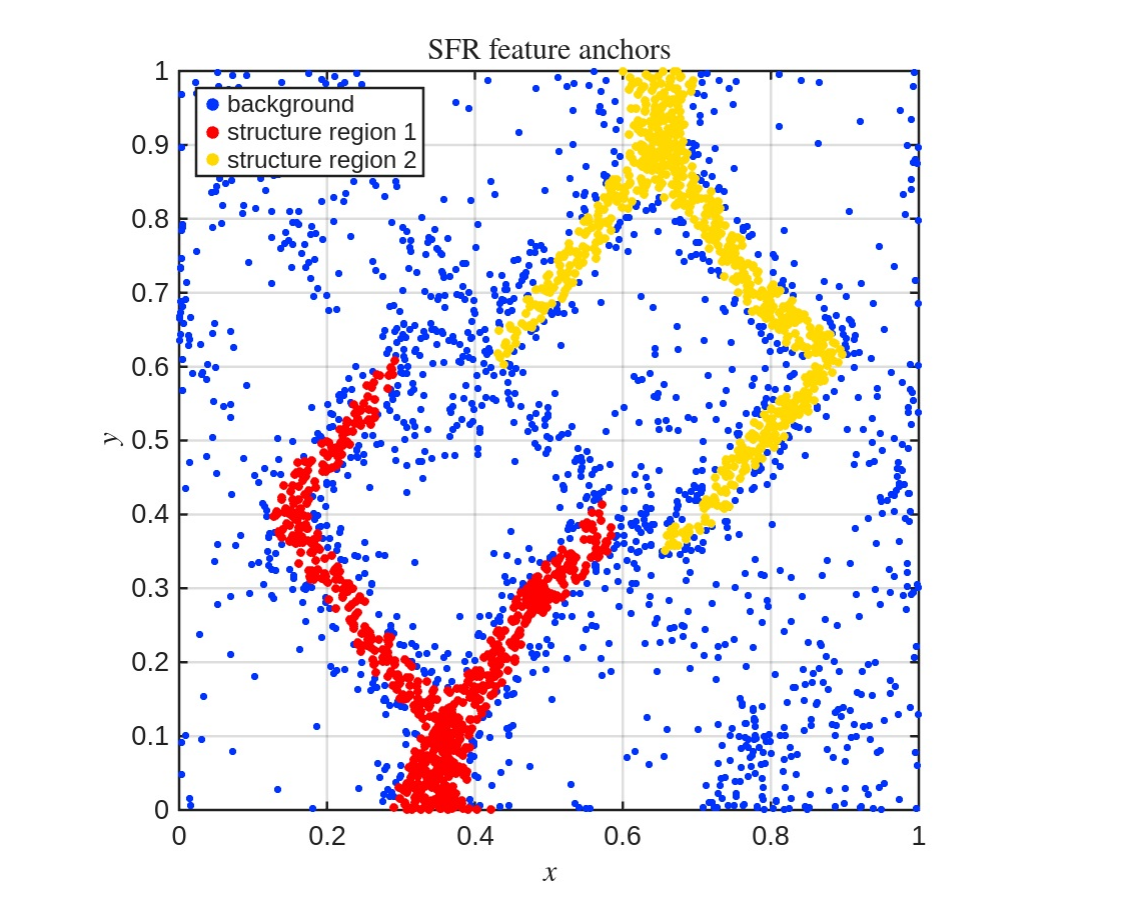}\\[-1pt]
        \includegraphics[width=0.30\textwidth]{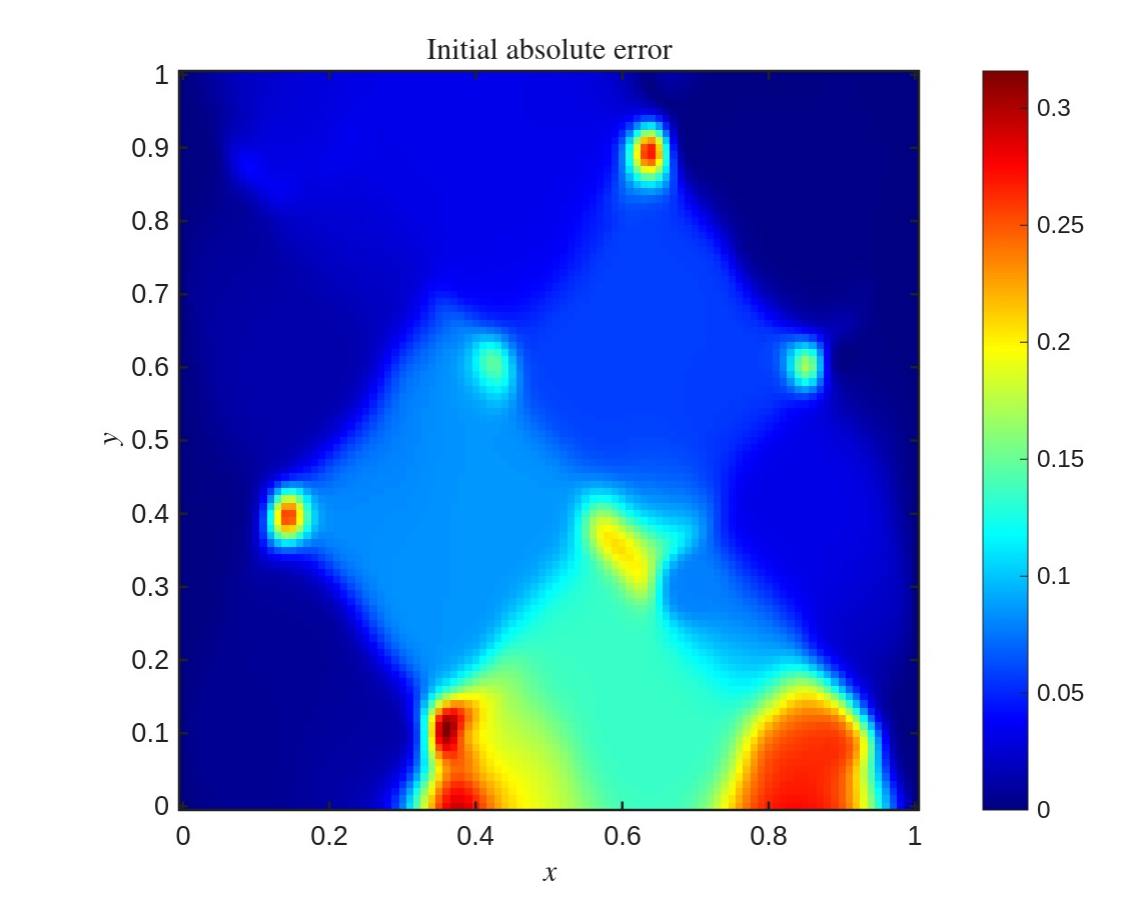} & \includegraphics[width=0.30\textwidth]{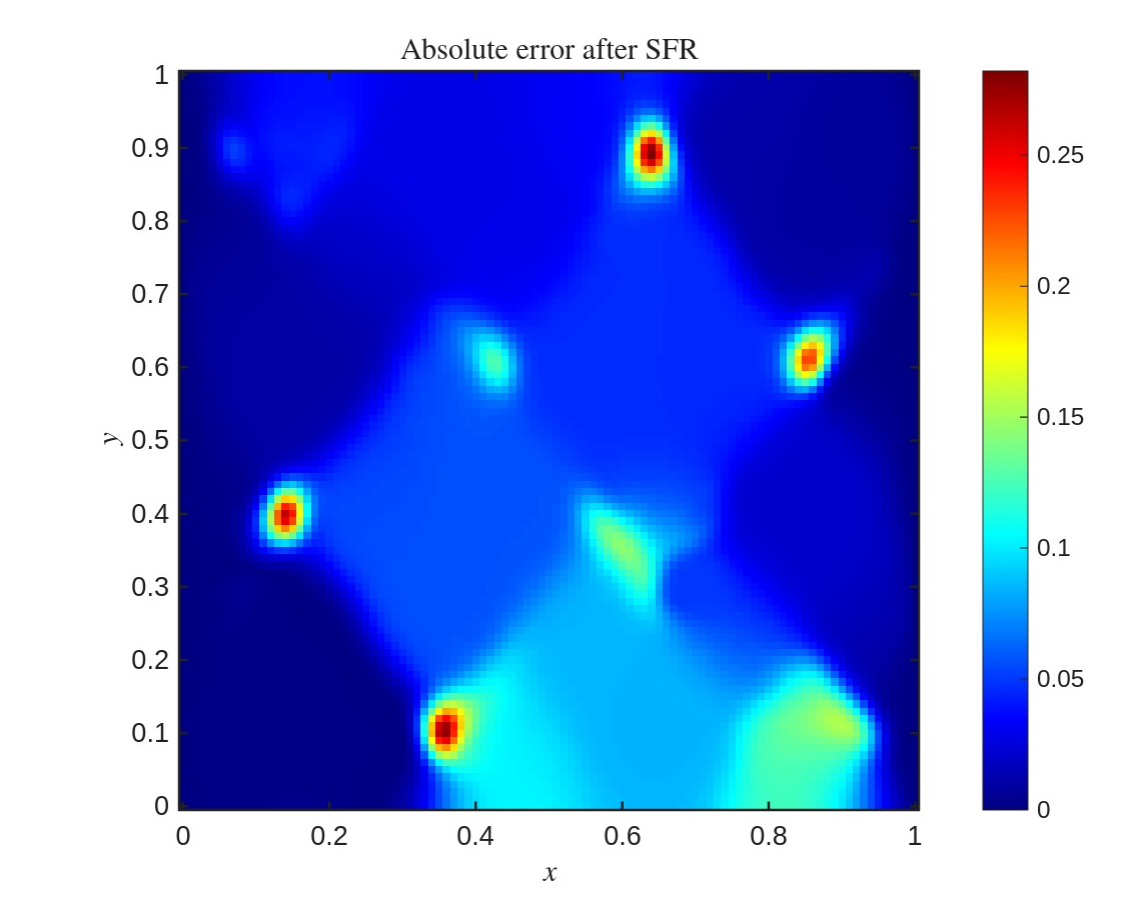}& \includegraphics[width=0.30\textwidth]{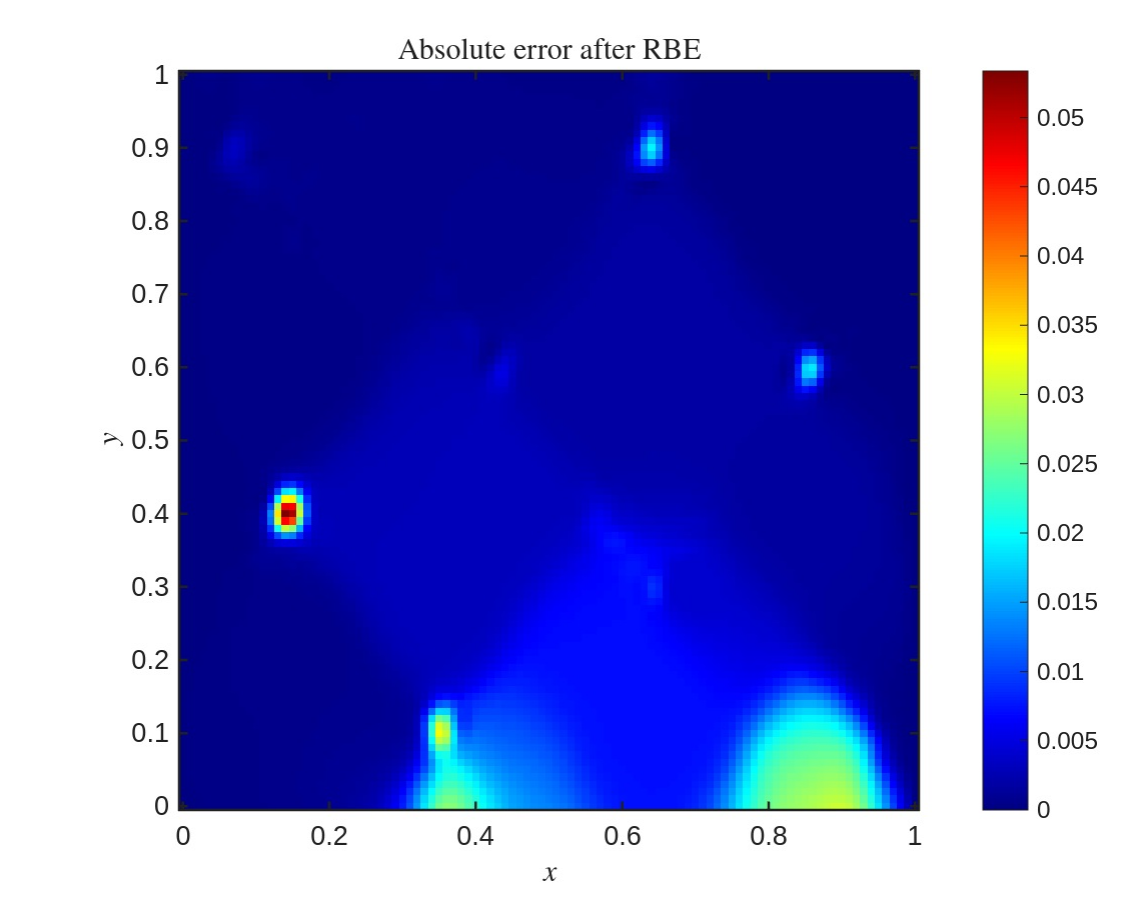}\\[-1pt]
    \end{tabular}
    \caption{Multiscale Darcy results for \eqref{eq:darcy_phi}. The first row shows the reference solution, the final numerical solution after RBE enrichment, and the SFR feature-anchor distribution. The second row shows the initial pointwise absolute error, the pointwise absolute error after SFR, and the pointwise absolute error after RBE.}
    \label{fig:darcy_phi_summary}
\end{figure}

\subsection{Nonlinear space--time problems}

\paragraph{Case 1: Generalized KdV localized traveling wave}

We consider the generalized KdV equation in the space--time domain:
\begin{equation}\label{eq:kdv}
    u_t+u_x+u^3u_x+\epsilon u_{xxx}=0, \qquad (x,t)\in(-2,3)\times(0,2),
\end{equation}
where $\epsilon=0.2058\times10^{-4}$. The exact solution and its parameters are written compactly as
\begin{equation}\label{eq:kdv_exact}
    \begin{gathered}
        u_{\rm ex}(x,t)=A\,\operatorname{sech}^{2/3}\!\left(K(x-x_0)-\omega t\right),\\[-2pt] A=0.2275,\qquad K=3\left(\frac{A^3}{40\epsilon}\right)^{1/2},\qquad \omega=K\left(1+\frac{A^3}{10}\right),\qquad x_0=0.5.
    \end{gathered}
\end{equation}
The initial and boundary conditions are prescribed from the exact solution:
\begin{equation}\label{eq:kdv_ic_bc}
    \begin{aligned}
        u(x,0)&=u_{\rm ex}(x,0),\qquad u(-2,t)=u_{\rm ex}(-2,t),\\ u_x(3,t)&=\partial_xu_{\rm ex}(3,t),\qquad u_{xx}(3,t)=\partial_{xx}u_{\rm ex}(3,t).
    \end{aligned}
\end{equation}
This nonlinear example contains a moving localized wave structure. The reported solver uses a fixed-operator Picard iteration:
\begin{equation}\label{eq:kdv_fixed_picard}
    \mathcal L_{\rm fix}v=v_t+v_x+\epsilon v_{xxx}, \qquad \mathcal L_{\rm fix}u^{(k+1)} =-(u^{(k)})^3u_x^{(k)}.
\end{equation}
In the reported run, AD--RaNN updates the initial scale from $[10,10]$ to $[10.199,10.194]$, SFR reallocates the fixed 2000-feature budget as $m_S=995$ and $m_B=1005$, and the accepted residual-aligned RBE blocks are then appended using the same fixed operator.

\begin{figure}[!ht]
    \centering
    \includegraphics[width=0.32\textwidth]{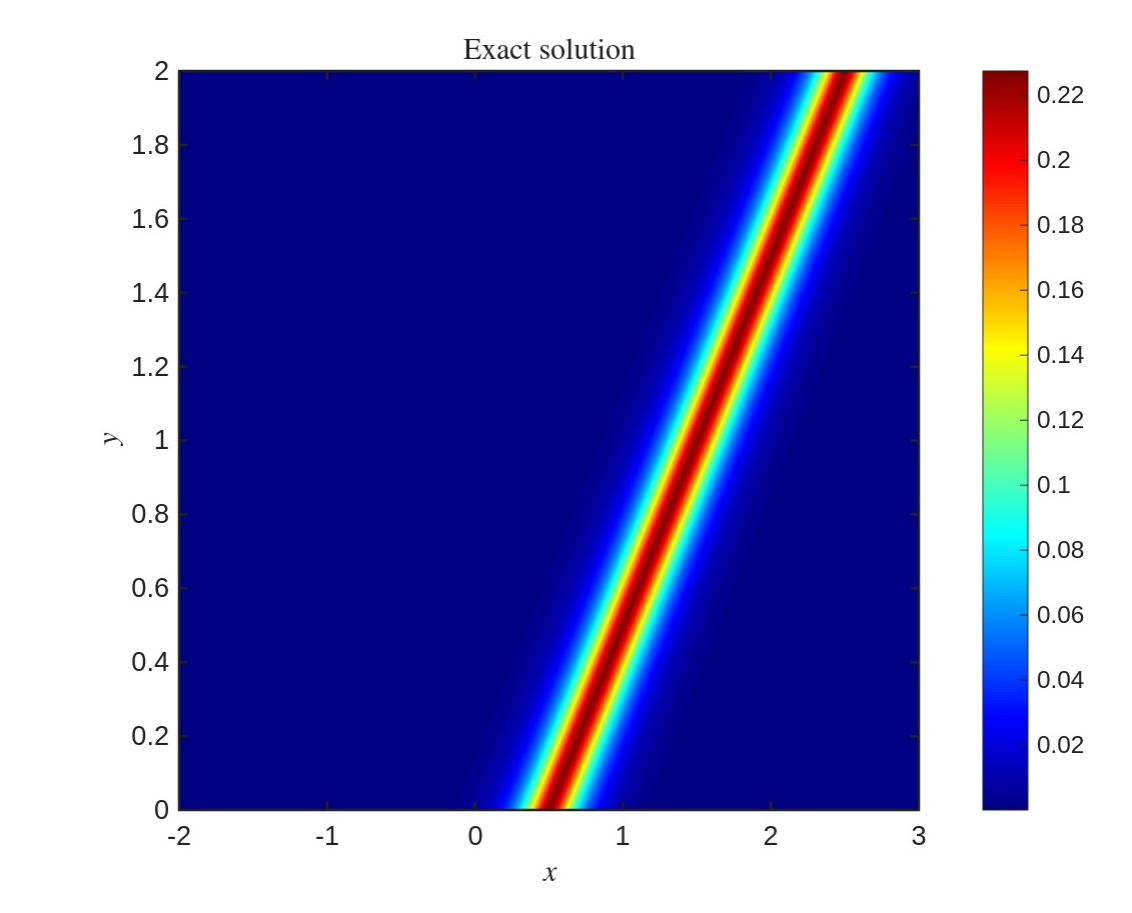}
    \includegraphics[width=0.32\textwidth]{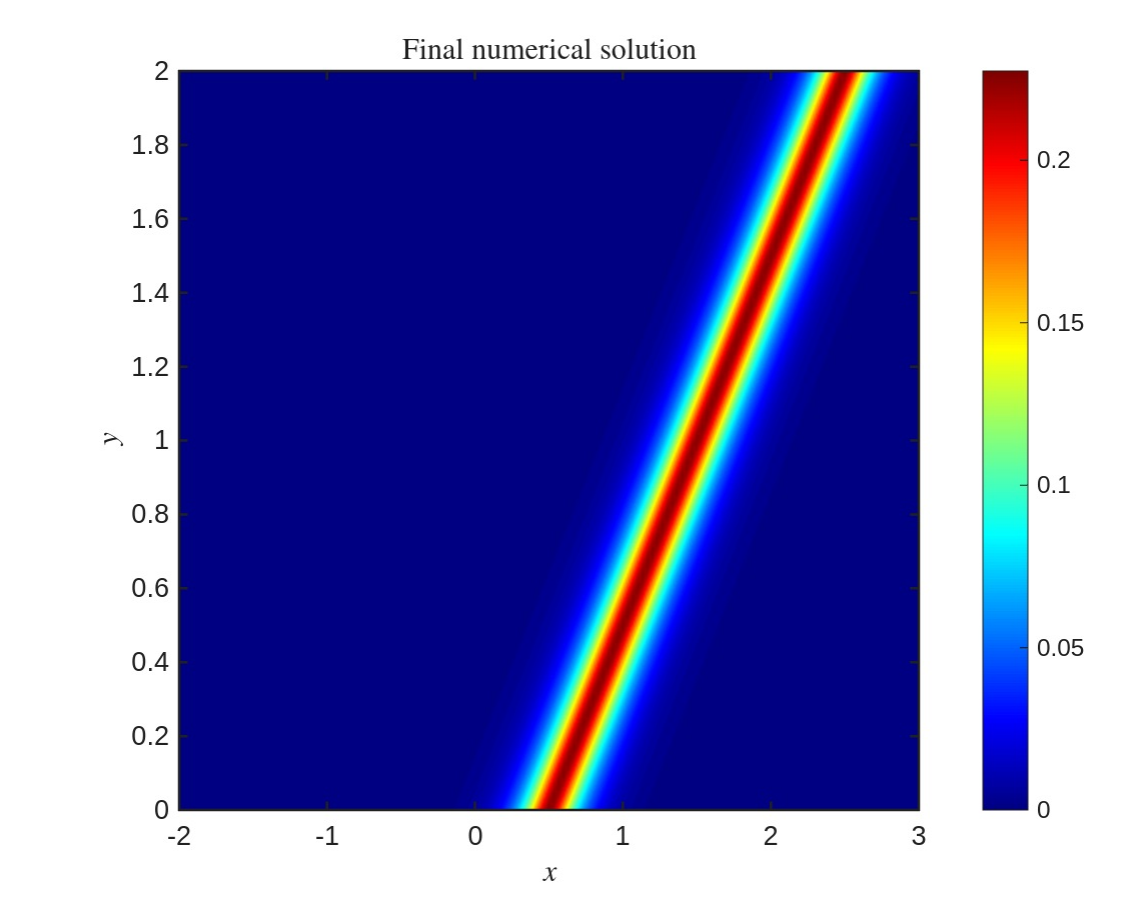}
    \includegraphics[width=0.32\textwidth]{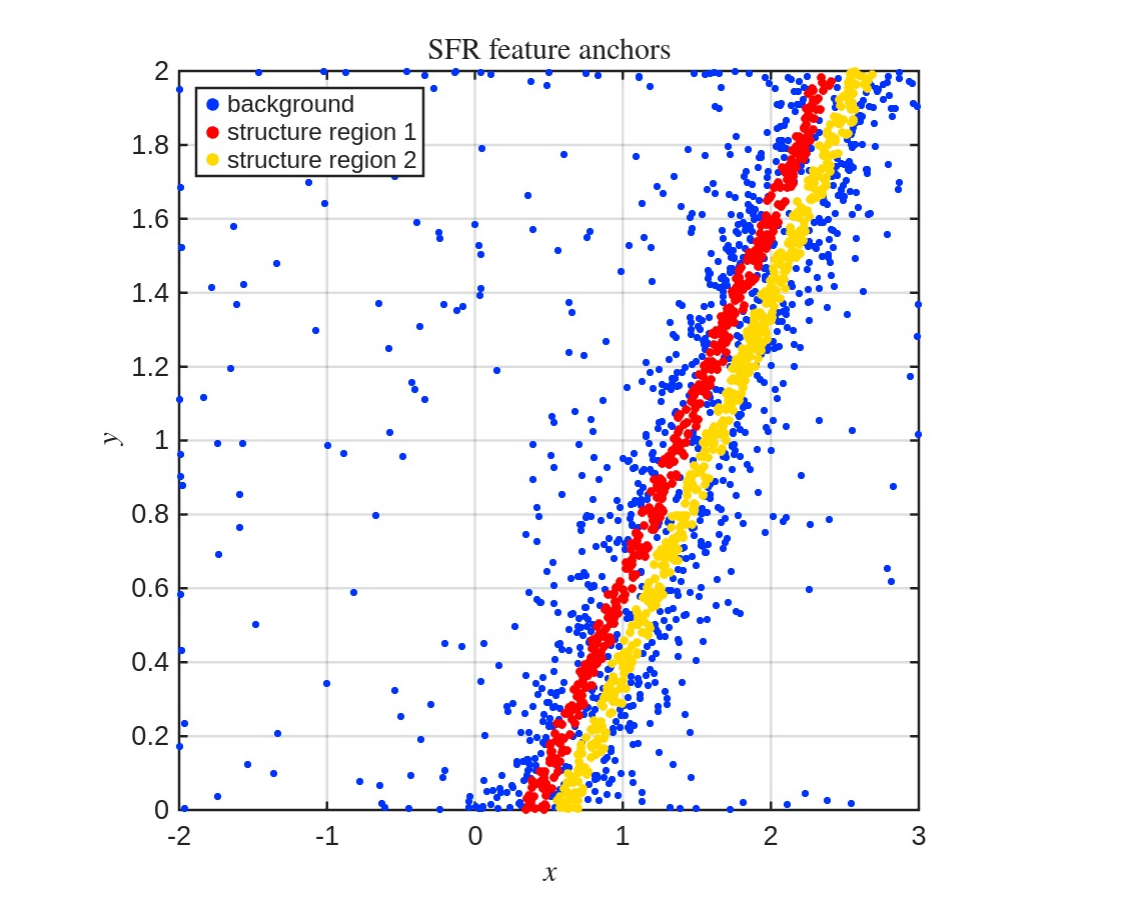}
    \caption{ Generalized KdV equation~\eqref{eq:kdv}. Exact solution (left), final numerical prediction after RBE enrichment (middle), and SFR feature-anchor distribution (right). The SFR feature anchors are concentrated around the moving localized wave structure. }
    \label{fig:kdv_solution_center}
\end{figure}

The fixed-operator initial solve has relative $L^2$ error $6.778\times10^{-3}$. SFR reduces it to $1.039\times10^{-5}$ while retaining 2000 features. Two accepted RBE blocks then add 400 residual-aligned features from \texttt{tanh} and \texttt{ridge\_gauss}, giving 2400 total features and a final relative $L^2$ error of $7.423\times10^{-7}$.

\begin{table}[!ht]
    \centering
    \caption{Stage-wise results for the fixed-operator generalized KdV equation~\eqref{eq:kdv}. The SFR rows report two structure components and the background, with $m_S$ denoting the total features allocated to the structure components. ``Total'' is the cumulative feature count. Times are stage wall-clock times; detailed RBE blocks are listed in Appendix~\ref{app:rbe_logs}.}
    \label{tab:kdv}
    \small
    \setlength{\tabcolsep}{3.5pt}
    \renewcommand{\arraystretch}{1.08}
    \resizebox{\linewidth}{!}{%
    \begin{tabular}{ccccccc}
        \toprule Stage & Family & Features & Total & Scale $\bm r$ & Time (s) & Rel. $L^2$\\
        \midrule Initial AD--RaNN & \texttt{tanh} & 2000 & 2000 & [10.199,10.194] & 34.005 & $6.778\mathrm{e}{-03}$\\
        \midrule \multirow{2}{*}{SFR} & \texttt{tanh} & $m_S=995$ & \multirow{2}{*}{2000} & \shortstack[l]{ $\mathcal S_1:[15.521,13.112]$\\
        $\mathcal S_2:[13.113,11.290]$} & \multirow{2}{*}{20.495} & \multirow{2}{*}{$1.039\mathrm{e}{-05}$}\\
        & \texttt{tanh} & $m_B=1005$ & & $[13.482,9.377]$ & & \\
        \midrule RBE & \texttt{tanh}/\texttt{ridge\_gauss} & 400 (2 blocks) & 2400 & {--} & 70.834 & $7.423\mathrm{e}{-07}$\\
        \bottomrule
    \end{tabular}}
\end{table}

\paragraph{Case 2: Periodic KdV two-soliton interaction}

We further consider the periodic KdV equation with double-soliton interaction:
\begin{equation}\label{eq:kdv2}
    u_t+u u_x+\epsilon u_{xxx}=0, \qquad (x,t)\in(0,2)\times(0,2),
\end{equation}
where $\epsilon=4.8400\times10^{-4}$. The periodic boundary conditions \(u(0,t)=u(2,t)\), \(u_x(0,t)=u_x(2,t)\), and \(u_{xx}(0,t)=u_{xx}(2,t)\) are imposed.
\begin{samepage}
    The two-pulse initial state is
    \begin{equation}\label{eq:kdv2_data}
        u(x,0)=3c_1\operatorname{sech}^2\!\left(k_1(x-x_1)\right) +3c_2\operatorname{sech}^2\!\left(k_2(x-x_2)\right),
    \end{equation}
    with \(c_1=0.3\), \(c_2=0.1\), \(x_1=0.4\), \(x_2=0.8\), \(\epsilon=4.84\times10^{-4}\), and \(k_i=\tfrac12\sqrt{c_i/\epsilon}\).
\end{samepage}

The equation solve uses damped Newton iterations. At the current iterate $u^{(k)}$,
\begin{equation}\label{eq:kdv2_newton}
    \begin{aligned}
        \mathcal J_k[\delta u] &=\delta u_t+u^{(k)}\delta u_x+u_x^{(k)}\delta u+\epsilon\,\delta u_{xxx},\\[-2pt] \delta\bm c^{(k)} &=\underset{\delta\bm c}{\arg\min}\, \|\bm J_k\delta\bm c+\bm R_k\|_2^2, \qquad \bm c^{(k+1)} =\bm c^{(k)}+\alpha_k\delta\bm c^{(k)} .
    \end{aligned}
\end{equation}
Here $\alpha_k\in\{1,2^{-1},2^{-2},\ldots\}$ is selected by backtracking to ensure a non-increasing nonlinear residual. The same refinement is applied after the initial solve, the SFR update, and each accepted RBE block.

Unlike the preceding benchmarks, this periodic KdV problem has no closed-form reference solution, so errors are measured against a high-accuracy Fourier spectral solution. The moving solitons and interaction region challenge globally sampled random features.
\begin{figure}[!ht]
    \centering
    \includegraphics[width=0.32\textwidth]{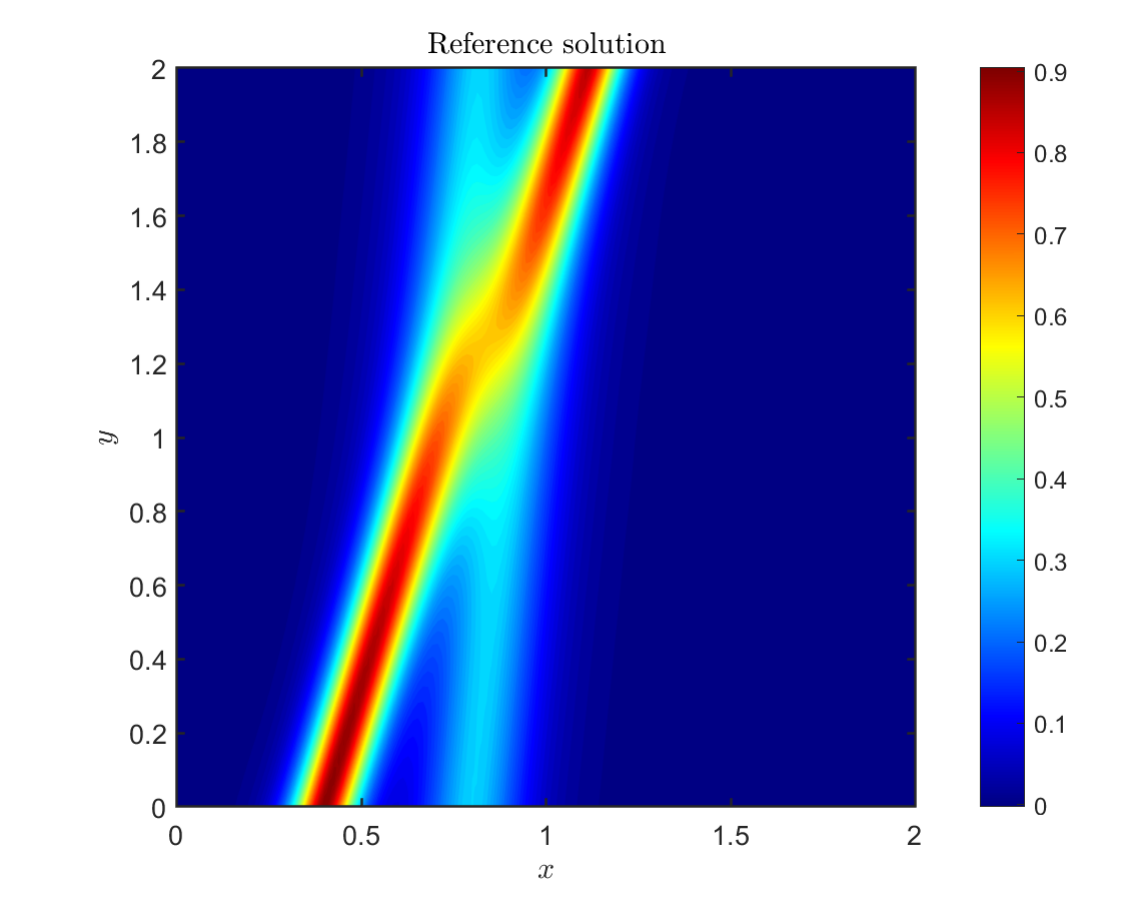}
    \includegraphics[width=0.32\textwidth]{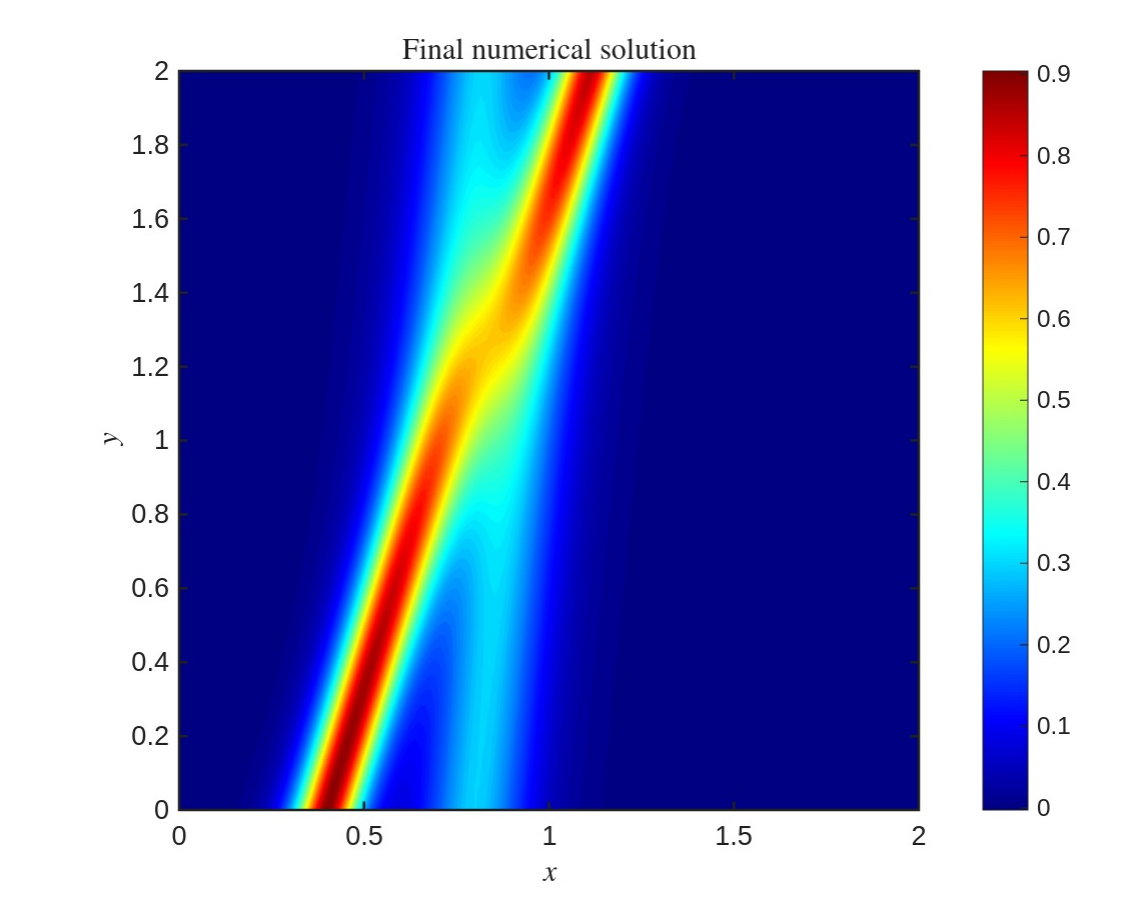}
    \includegraphics[width=0.32\textwidth]{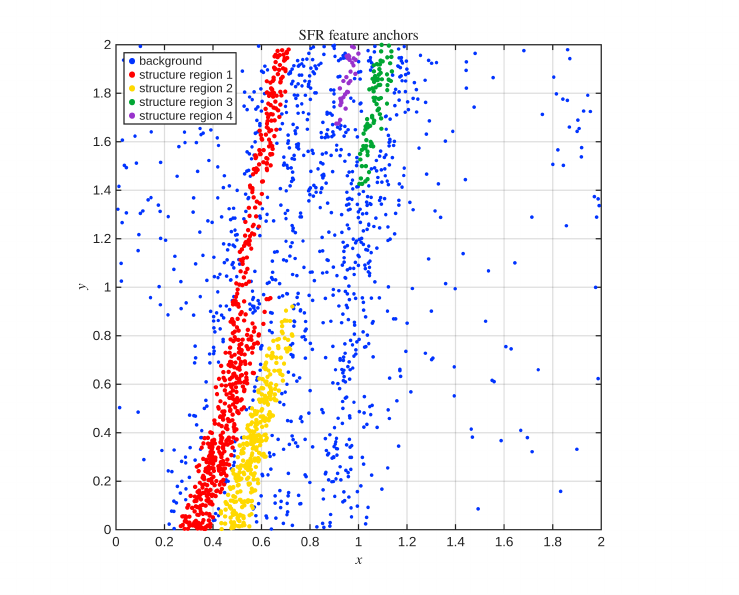}
    \caption{ Periodic KdV two-soliton interaction problem~\eqref{eq:kdv2}. Left: high-accuracy Fourier spectral reference solution. Middle: final numerical prediction after RBE enrichment. Right: SFR feature-anchor distribution. }
    \label{fig:kdv2_solution_center}
\end{figure}

To assess seed sensitivity, we repeat the same solver for seeds $0,\ldots,9$ without saving figures. The representative seed-4 run uses damped Newton refinement, changes the AD--RaNN scale from $[10,10]$ to $[14.8043,9.5169]$, allocates the fixed 2000-feature SFR budget as $m_S=984$ and $m_B=1016$, and accepts three RBE blocks. Across the ten seeds, the relative $L^2$ error decreases from $2.976\times10^{-1}$ (best $1.788\times10^{-1}$, worst $3.822\times10^{-1}$) initially to $4.225\times10^{-3}$ after SFR (best $2.219\times10^{-4}$, worst $1.435\times10^{-2}$) and to $9.061\times10^{-5}$ after RBE (best $6.648\times10^{-5}$, worst $1.262\times10^{-4}$). The accepted block count ranges from two to five (mean $3.7$), with a mean final width of $3130$ features. The representative seed-4 blocks and their scales are listed in Appendix~\ref{app:rbe_logs}.

\begin{table}[!ht]
    \centering
    \caption{Stage-wise numerical results for the periodic KdV two-soliton problem~\eqref{eq:kdv2} over seeds \(0,\ldots,9\). The SFR entry also reports the mean allocations to the structure and background regions; the RBE entry reports the mean cumulative feature count, with the mean number of added features and accepted blocks given in parentheses. Times are mean stage wall-clock times, and the values after $\pm$ denote one standard deviation over the ten seeds.}
    \label{tab:kdv2}
    \renewcommand{\arraystretch}{1.18}
    \setlength{\tabcolsep}{8pt}
    \resizebox{\linewidth}{!}{%
    \begin{tabular}{cccccc}
        \toprule Stage & Features (avg.) & Time (s, avg.) & Rel. \(L^2\) (avg.) & Rel. \(L^2\) (best) & Rel. \(L^2\) (worst)\\
        \midrule Initial AD--RaNN & \(2000\) & \(87.203\) & \(2.976\mathrm{e}{-01}\pm6.354\mathrm{e}{-02}\) & \(1.788\mathrm{e}{-01}\) & \(3.822\mathrm{e}{-01}\)\\
        \midrule SFR & \(2000\;(m_S=995.4,\ m_B=1004.6)\) & \(96.761\) & \(4.225\mathrm{e}{-03}\pm4.807\mathrm{e}{-03}\) & \(2.219\mathrm{e}{-04}\) & \(1.435\mathrm{e}{-02}\)\\
        \midrule RBE & \(3130\;(1130\ \text{added},\ 3.7\ \text{blocks})\) & \(579.581\) & \(9.061\mathrm{e}{-05}\pm2.333\mathrm{e}{-05}\) & \(6.648\mathrm{e}{-05}\) & \(1.262\mathrm{e}{-04}\)\\
        \bottomrule
    \end{tabular}}
\end{table}

\paragraph{Case 3: Burgers equation}

We finally consider the nonlinear Burgers equation on the three-dimensional space--time domain:
\begin{equation}\label{eq:burgers3d}
    u_t+u(u_x+u_y)-\epsilon(u_{xx}+u_{yy})=0, \qquad (x,y,t)\in(0,1)^2\times(0,1), \qquad \epsilon=3\times10^{-2}.
\end{equation}
The initial and boundary data are prescribed from $u_{\rm ex}(x,y,t)=\frac{1}{1+\exp\!\left((x+y-t)/(2\epsilon)\right)}$.

We use the stabilized fixed-operator iteration with the nonlinear convection term lagged from the previous iterate; the global marked set is used as one SFR structure mask.
\begin{figure}[!ht]
    \centering
    \includegraphics[width=0.24\textwidth]{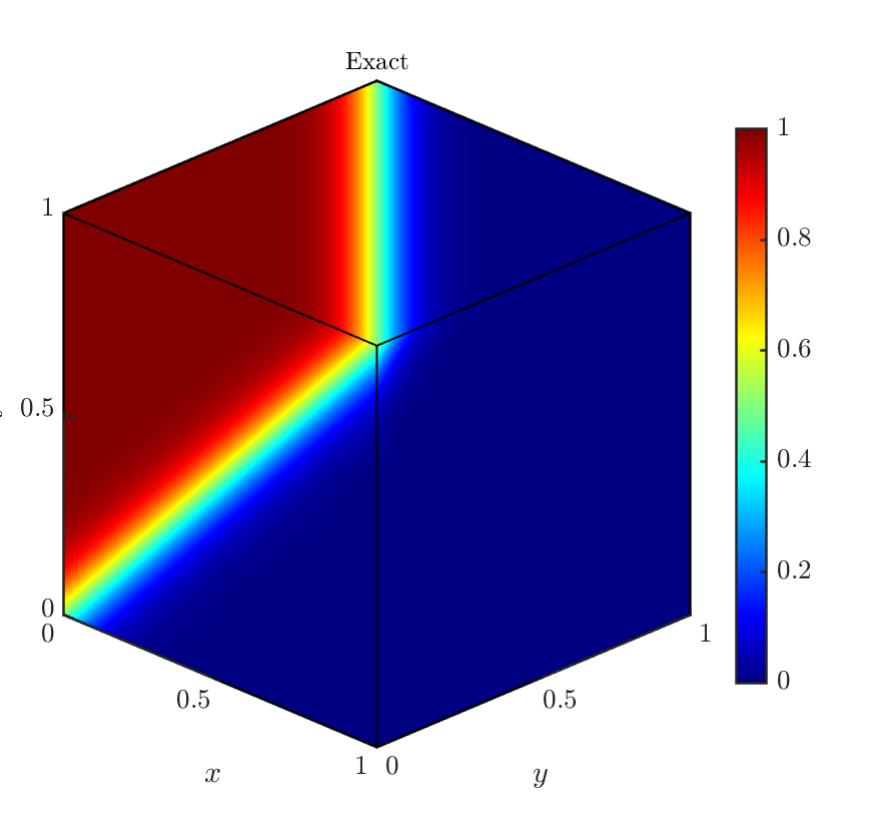}
    \includegraphics[width=0.24\textwidth]{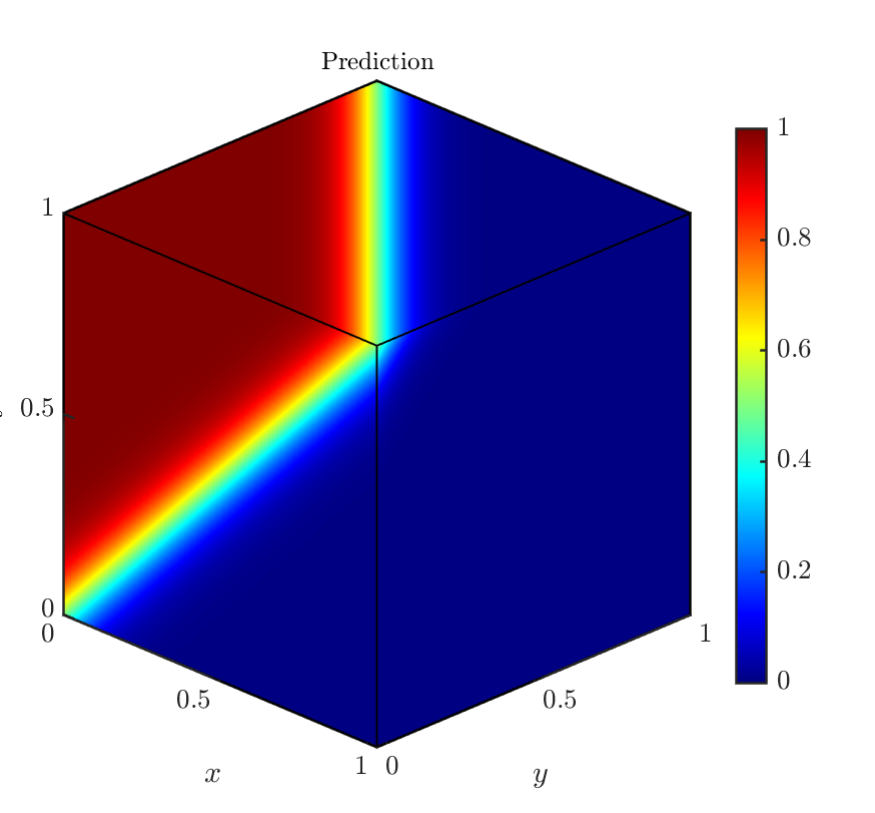}
    \includegraphics[width=0.24\textwidth]{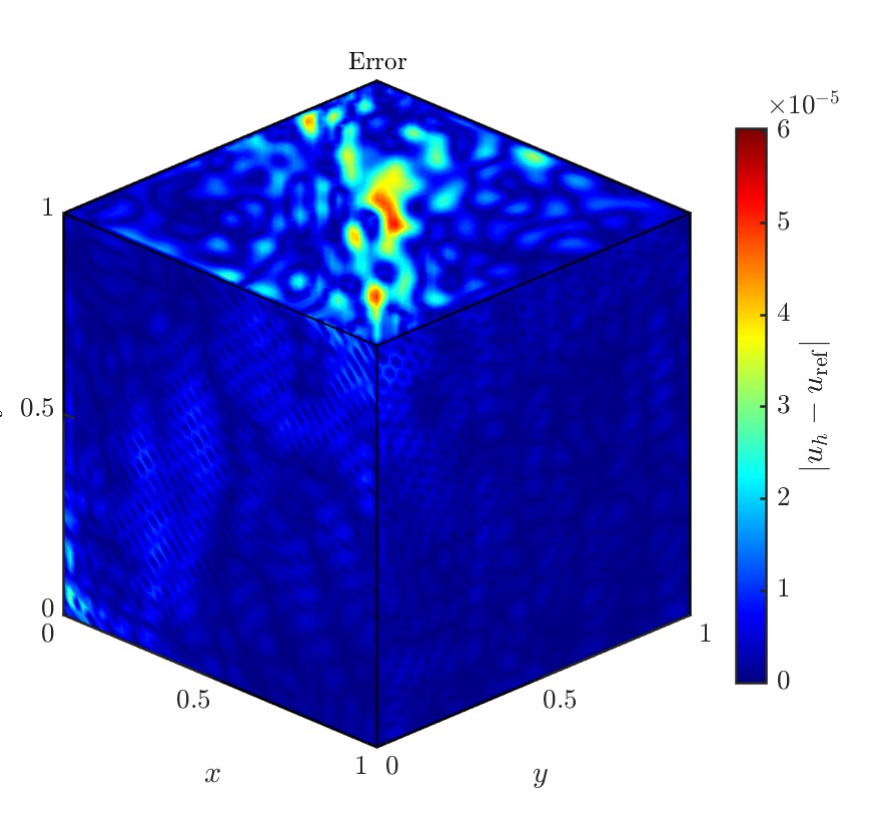}
    \includegraphics[width=0.24\textwidth]{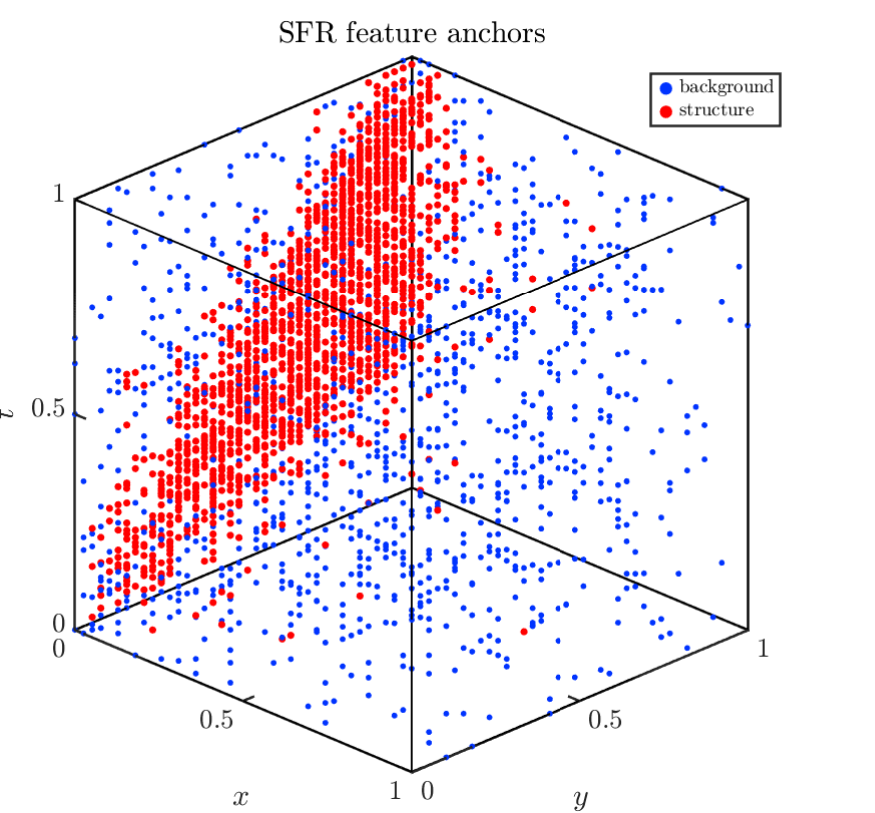}
    \caption{Space--time visualization for the Burgers problem \eqref{eq:burgers3d}: exact solution, final prediction after RBE enrichment, pointwise absolute error, and the SFR feature-anchor distribution.}
    \label{fig:burgers3d_summary}
\end{figure}
The relative $L^2$ error decreases from $8.202\times10^{-3}$ for the initial AD--RaNN space to $1.622\times10^{-4}$ after SFR and to $2.271\times10^{-5}$ after four accepted RBE blocks. The accepted blocks use the families \texttt{tanh}, \texttt{ridge\_gauss}, \texttt{tanh}, and \texttt{ridge\_gauss}, adding 1000 features in total.
\begin{table}[!ht]
    \centering
    \caption{Stage-wise results for the Burgers problem \eqref{eq:burgers3d}. The two SFR rows report the structure- and background-region neuron counts $m_S$ and $m_B$; ``Total'' is the cumulative feature count. Times are stage wall-clock values and exclude figure generation.}
    \label{tab:burgers3d}
    \small
    \setlength{\tabcolsep}{3.5pt}
    \renewcommand{\arraystretch}{1.08}
    \resizebox{\linewidth}{!}{%
    \begin{tabular}{ccccccc}
        \toprule Stage & Family & Features & Total & Scale $\boldsymbol r$ & Time (s) & Rel. $L^2$\\
        \midrule Initial AD--RaNN & \texttt{tanh} & 3000 & 3000 & [10.864,9.549,7.862] & 182.302 & $8.202\mathrm{e}{-03}$\\
        \midrule \multirow{2}{*}{SFR} & \texttt{tanh} & $m_S=1499$ & \multirow{2}{*}{3000} & \multirow{2}{*}{\shortstack{$[12.948,12.071,11.089]$\\$[8.500,8.986,8.639]$}} & \multirow{2}{*}{160.578} & \multirow{2}{*}{$1.622\mathrm{e}{-04}$}\\
        & \texttt{tanh} & $m_B=1501$ & & & & \\
        \midrule RBE & \texttt{tanh}/\texttt{ridge\_gauss} & 1000 (4 blocks) & 4000 & {--} & 564.769 & $2.271\mathrm{e}{-05}$\\
        \bottomrule
    \end{tabular}}
\end{table}

\section{Conclusion}

This paper introduced SFRBE--RaNN, an adaptive randomized neural PDE solver that preserves the output-linear least-squares structure while combining structure-aware feature redistribution with residual-driven block enrichment. SFR reallocates a fixed feature budget through adaptive anchors, regional neuron counts, and local scales, whereas RBE selects activation families and nonredundant feature blocks according to their residual-capture ability. Numerical experiments on localized, multiscale, variable-coefficient elliptic, and nonlinear space--time problems demonstrate the complementary effects of the two stages, while the analysis establishes exact fixed-linear residual reduction, a conditional finite-horizon RBE success-probability estimate and a safeguarded baseline-to-final error bound, together with a separate nonlinear acceptance estimate. Future work will develop efficient large-scale least-squares solvers for growing randomized feature systems, extend the framework to operator learning and parametric or multiphysics problems, and establish uniform candidate-coverage and finite-sample consistency bounds for fully data-dependent trial-space adaptation.

\bibliographystyle{plain}
\bibliography{references}

\appendix

\section{Proof of the Poisson residual-to-error estimate}
\label{app:poisson_residual_bound}

\begin{proof}[Proof of the Poisson stability estimate]
    Write \(v-u^*=w+z\), where \(-\Delta w=R(v)\), \(w|_{\partial\Omega}=0\), and \(z\) is harmonic with trace \(b(v)\). Since the first Dirichlet eigenvalue of the unit square is \(2\pi^2\),
    \[ \|w\|_{L^2(\Omega)} \le \frac{\|R(v)\|_{L^2(\Omega)}}{2\pi^2}. \]
    Let \(-\Delta\tau=1\) in \(\Omega\) with \(\tau|_{\partial\Omega}=0\). The maximum principle and comparison give \(0\le-\partial_n\tau\le1/2\) on \(\partial\Omega\). Green's identity then yields
    \[ \|z\|_{L^2(\Omega)}^2 \le -\int_{\partial\Omega}b(v)^2\partial_n\tau \le \frac12\|b(v)\|_{L^2(\partial\Omega)}^2, \]
    and hence \( \|z\|_{L^2(\Omega)} \le \frac{\|b(v)\|_{L^2(\partial\Omega)}}{\sqrt{2}}. \) The triangle inequality now gives \eqref{eq:poisson_l2_bound}.
\end{proof}

\section{Optional family-parameter refinement for SFRBE--RaNN}
\label{app:rbe_parameter_refinement}

The scale and family parameters in SFRBE--RaNN can be selected by several admissible mechanisms, including frequency-informed rules, bounded evolutionary searches such as differential evolution, and low-dimensional adaptive-distribution optimization~\cite{dang2024adaptive,dong2022Oncomputing,yang2026adaptivedistribution}. In this paper, both SFR and RBE use the AD--RaNN-style low-dimensional search, in which Adam updates the parameter vector $\boldsymbol\vartheta_F\in\Theta_F$ while the sampled anchors and directions remain fixed. Here $\Theta_F$ denotes the admissible parameter set for family $F$ used in the bounded parameter search. For a candidate family $F$, let
\begin{equation}
    C_{F,\perp}^{\rm scr}(\boldsymbol\vartheta_F)=(I-Q_LQ_L^{\mathsf T})C_F^{\rm scr}(\boldsymbol\vartheta_F). \label{eq:rbe_reduced_residualized_response}
\end{equation}
Starting from the initial value $\boldsymbol\vartheta_F^{(0)}$, Adam minimizes the ridge-regularized reduced objective
\begin{equation}
    \mathcal J_{F,L}^{\rm red,\lambda}(\boldsymbol\vartheta_F) = \min_{\bm a} \left\{ \left\|C_{F,\perp}^{\rm scr}(\boldsymbol\vartheta_F)\bm a-\bm r_L\right\|_2^2 + \lambda_{\rm scale}\|\bm a\|_2^2 \right\}, \qquad \boldsymbol\vartheta_F\in\Theta_F, \label{eq:rbe_reduced_tuning_objective}
\end{equation}
for at most $T_{\rm tune}$ bounded updates. The same regularization parameter $\lambda_{\rm scale}=10^{-6}$ is used in the reduced parameter searches for the initial AD--RaNN construction, SFR, and RBE. The ridge term is used only during this low-dimensional parameter search; after $\boldsymbol\vartheta_F$ is selected, the regularization is removed and the output coefficients are recomputed by the unregularized weighted least-squares solve.

\section{Additional implementation details}
\label{app:hyperparameters}

This appendix summarizes the implementation settings used in the numerical experiments. Unless otherwise stated, the output coefficients are computed by unregularized linear or linearized least-squares solves. The regularization parameter $\lambda_{\rm scale}=10^{-6}$ is used to stabilize the reduced parameter-search objectives in the initial AD--RaNN construction, SFR, and RBE. Tables~\ref{tab:app_problem_settings} and~\ref{tab:app_sfr_rbe_settings} summarize problem-dependent settings and shared controls, respectively.

\begin{table}[!ht]
    \centering
    \caption{Problem-dependent discretization, features, and constraint-weight settings. }
    \label{tab:app_problem_settings}
    \scriptsize
    \renewcommand{\arraystretch}{1.08}
    \setlength{\tabcolsep}{3pt}
    \begin{tabularx}{\textwidth}{
        @{} >{\raggedright\arraybackslash}p{0.23\textwidth} >{\centering\arraybackslash}X >{\centering\arraybackslash}p{0.14\textwidth} >{\centering\arraybackslash}p{0.13\textwidth} >{\centering\arraybackslash}p{0.20\textwidth} @{}} \toprule Problem & Points & Initial features & Full block size $m_{\rm full}$ & Weights $(\sqrt{\eta_b},\sqrt{\eta_0})$\\
        \midrule Two-peak Poisson & $N_{\rm pde}=22500,\ N_b=800$ & $3500$ & $1000$ & $(10^5,\text{--})$\\
        Multiscale Helmholtz & $N_{\rm pde}=62500,\ N_b=800$ & $3500$ & $2000$ & $(10^5,\text{--})$\\
        Multiscale Darcy & $N_{\rm pde}=22500,\ N_b=800$ & $3000$ & $1000$ & $(100,\text{--})$\\
        Generalized KdV & $N_{\rm pde}=22500,\ N_b=400,\ N_0=200$ & $2000$ & $1000$ & $(100,100)$\\
        Periodic KdV & $N_{\rm pde}=32400,\ N_b=400,\ N_0=200$ & $2000$ & $1000$ & $(100,100)$\\
        Burgers & $N_{\rm pde}=37026,\ N_b=4760,\ N_0=1225$ & $3000$ & $1000$ & $(300,300)$\\
        \bottomrule
    \end{tabularx}
\end{table}
\begin{table}[!ht]
    \centering
    \caption{Shared SFR/RBE and numerical implementation controls.}
    \label{tab:app_sfr_rbe_settings}
    \scriptsize\setlength{\tabcolsep}{4pt}
    \begin{tabular}{@{}p{0.35\textwidth}p{0.30\textwidth}p{0.26\textwidth}@{}}
        \toprule Module & Parameters & Values\\\midrule SFR & $\vartheta,\tau_{\rm eff},\tau_{\rm conn},\tau_{\rm cover}$ & $0.50,0.35,0.01,0.35$\\
        RBE screening & $\tau_{\rm svd},\tau_{\rm cap}$ & $10^{-10},10^{-4}$\\
        RBE screening size & $m_{\rm scr}$ & $\min\{500,\lceil0.3m_{\rm full}\rceil\}$\\
        RBE prefix/stop & $\theta_{\rm pref},\tau_{\rm stop}$ & $0.80,0.25$\\
        Stabilization & $\varepsilon_n$ & $10^{-12}$\\
        SFR parameter refinement & $\mathrm{lr}_S,T_S$ & $(0.7,30)$\\
        Family-parameter search & $\mathrm{lr}_F,T_{\rm tune}$ & $(0.5,30)$\\
        \bottomrule
    \end{tabular}
\end{table}
The prescribed prefix-size set is defined as
\[ \mathcal P =\left\{\left\lceil \alpha m_{\rm full}\right\rceil :\alpha\in\{0.2,0.4,0.5,0.6,0.7,0.8,0.9,1.0\} \right\}\]. The same sampling-cap rule is applied independently to the structure region, the background region, and the RBE candidate blocks. For low-dimensional parameter optimization, we subsample features and collocation equations separately for the SFR structure and background regions and RBE candidate blocks. Let $q\in\{S,B,R\}$, with $m_q$ selected features and $N_{{\rm eq},q}$ available equation rows. The sampling cap is
\begin{equation}
    N_q^{\max}=\min\left\{ N_{{\rm eq},q}, \left\lceil m_q\log(1+m_q)\right\rceil \right\},\qquad q\in\{S,B,R\}.
\end{equation}

\section{Stagewise activation-family and scale records}
\label{app:rbe_logs}
The following per-round records use representative seed-4 runs. Poisson and periodic KdV aggregate statistics remain in the main text.
\begin{table}[!ht]
    \centering
    \caption{Activation-family choices and residual decrease during RBE enrichment. Each row gives the post-acceptance weighted relative residual for the indicated block; the two panels are a compact continuation of one table.}
    \label{tab:app_RBE_details}
    \scriptsize
    \setlength{\tabcolsep}{2.2pt}
    \renewcommand{\arraystretch}{1.02}
    \begin{minipage}[t]{0.49\linewidth}
        \centering
        \resizebox{\linewidth}{!}{%
        \begin{tabular}{@{}r l r c r@{}}
            \toprule Round & Family & $m$ & Scale/shape & Rel. residual\\
            \midrule \multicolumn{5}{@{}l}{\textbf{Two-peak Poisson (Seed 4)}}\\
            1 & \texttt{radial\_gauss} & 200 & [47.872,50.505] & $6.001\mathrm{e}{-04}$\\
            2 & \texttt{mexican\_hat} & 200 & [55.822,50.104] & $1.992\mathrm{e}{-04}$\\
            3 & \texttt{ridge\_gauss} & 400 & [69.134,66.557] & $9.468\mathrm{e}{-05}$\\
            4 & \texttt{mexican\_hat} & 500 & [48.317,57.077] & $5.354\mathrm{e}{-05}$\\
            5 & \texttt{sin} & 600 & [48.802,50.577] & $3.093\mathrm{e}{-05}$\\
            6 & \texttt{mexican\_hat} & 500 & [46.264,71.383] & $1.796\mathrm{e}{-05}$\\
            7 & \texttt{sin} & 500 & [51.141,52.627] & $7.823\mathrm{e}{-06}$\\
            8 & \texttt{ridge\_gauss} & 400 & [76.559,73.034] & $4.993\mathrm{e}{-06}$\\
            \addlinespace[1pt] \multicolumn{5}{@{}l}{\textbf{Multiscale Helmholtz}}\\
            1 & \texttt{sin} & 400 & [61.490,67.019] & $2.507\mathrm{e}{-01}$\\
            2 & \texttt{tanh} & 400 & [67.066,70.204] & $1.381\mathrm{e}{-01}$\\
            3 & \texttt{sin} & 400 & [61.165,67.295] & $6.122\mathrm{e}{-02}$\\
            4 & \texttt{ridge\_gauss} & 400 & [80.840,84.406] & $3.355\mathrm{e}{-02}$\\
            5 & \texttt{sin} & 400 & [61.015,66.926] & $9.133\mathrm{e}{-03}$\\
            6 & \texttt{ridge\_gauss} & 400 & [82.788,86.523] & $4.532\mathrm{e}{-03}$\\
            7 & \texttt{sin} & 400 & [61.121,68.547] & $7.007\mathrm{e}{-04}$\\
            8 & \texttt{ridge\_gauss} & 800 & [82.173,88.652] & $3.576\mathrm{e}{-04}$\\
            9 & \texttt{sin} & 800 & [60.006,68.498] & $1.328\mathrm{e}{-04}$\\
            10 & \texttt{mexican\_hat} & 1200 & [5.239,23.696] & $1.072\mathrm{e}{-04}$\\
            \bottomrule
        \end{tabular}}
    \end{minipage}\hfill
    \begin{minipage}[t]{0.49\linewidth}
        \centering
        \resizebox{\linewidth}{!}{%
        \begin{tabular}{@{}r l r c r@{}}
            \toprule Round & Family & $m$ & Scale/shape & Rel. residual\\
            \midrule \multicolumn{5}{@{}l}{\textbf{Multiscale Darcy}}\\
            1 & \texttt{mexican\_hat} & 500 & [35.224,36.122] & $9.792\mathrm{e}{-05}$\\
            2 & \texttt{ridge\_gauss} & 700 & [35.261,36.979] & $5.996\mathrm{e}{-05}$\\
            3 & \texttt{tanh} & 500 & [45.750,44.683] & $3.699\mathrm{e}{-05}$\\
            4 & \texttt{ridge\_gauss} & 700 & [35.228,35.666] & $2.608\mathrm{e}{-05}$\\
            5 & \texttt{tanh} & 800 & [48.606,43.850] & $1.976\mathrm{e}{-05}$\\
            6 & \texttt{ridge\_gauss} & 900 & [33.089,34.127] & $1.587\mathrm{e}{-05}$\\
            7 & \texttt{tanh} & 800 & [53.972,48.430] & $1.307\mathrm{e}{-05}$\\
            \addlinespace[1pt] \multicolumn{5}{@{}l}{\textbf{Generalized KdV (fixed operator)}}\\
            1 & \texttt{tanh} & 200 & [15.718,13.432] & $3.964\mathrm{e}{-07}$\\
            2 & \texttt{ridge\_gauss} & 200 & [28.334,20.426] & $1.676\mathrm{e}{-07}$\\
            \addlinespace[1pt] \multicolumn{5}{@{}l}{\textbf{Periodic KdV two-soliton (Seed 4)}}\\
            1 & \texttt{ridge\_gauss} & 200 & [27.148,14.723] & $5.518\mathrm{e}{-05}$\\
            2 & \texttt{tanh} & 200 & [33.628,14.985] & $4.000\mathrm{e}{-05}$\\
            3 & \texttt{sin} & 600 & [23.463,15.269] & $3.486\mathrm{e}{-05}$\\
            \addlinespace[1pt] \multicolumn{5}{@{}l}{\textbf{Burgers}}\\
            1 & \texttt{tanh} & 200 & [13.118,15.097,12.596] & $3.540\mathrm{e}{-05}$\\
            2 & \texttt{ridge\_gauss} & 400 & [18.520,20.163,12.438] & $2.714\mathrm{e}{-05}$\\
            3 & \texttt{tanh} & 200 & [20.770,24.351,15.088] & $1.963\mathrm{e}{-05}$\\
            4 & \texttt{ridge\_gauss} & 200 & [25.983,32.015,19.770] & $1.572\mathrm{e}{-05}$\\
            \addlinespace[1pt] \bottomrule
        \end{tabular}}
    \end{minipage}
\end{table}

\section*{Code availability}
The repository address for the MATLAB implementation, experiment configurations, and plotting scripts is \url{https://github.com/XJTU-AI4SciComp-Lab/sfrbe-rann-adaptive-paper-code}.

\end{document}